\newtheorem{Theorem}{Theorem}
\newtheorem{Corollary}{Corollary}
\newtheorem{discu}{Discussion:}
\newtheorem{conje}{Conjecture:}
\begin{document}
\date{}
\centering
\title{\textbf{Certain Domination Parameters and its Resolving Version of Fractal Cubic Networks}}
\author{\begin{tabular}{rlc}
		\textbf{S. Prabhu$^{\text a, }$\thanks{Corresponding author: drsavariprabhu@gmail.com}, A.K. Arulmozhi$^{\text b}$, M. Arulperumjothi$^{\text  c}$ } \\
	\end{tabular}\\
	\begin{tabular}{c}
		\small $^{\text a}$Department of Mathematics, Rajalakshmi Engineering College, Thandalam, Chennai 602105, India \\
		$^{\text b}$\small Department of Mathematics, R.M.K. College of Engineering and Technology, Puduvoyal 601206, India\\
		$^{\text c}$\small Department of Mathematics, St. Joseph's College of Engineering, Chennai 600119, India \\
\end{tabular}}
\maketitle
\vspace{-0.5 cm}
\begin{abstract}
Networks are designed to communicate, operate and allocate the tasks to the respective commodities. Operating the supercomputers became challenging, and it was handled by the network design commonly known as hypercube, denoted by $Q^n$. In a recent study, the hypercube networks were not enough to hold the parallel processors in the supercomputers. Thus, variants of hypercubes were discovered to produce an alternative to the hypercube. A new variant of the hypercube, the \textit{fractal cubic network}, can be used as the best alternative in the case of hypercubes, which was wrongly defined in [Eng. Sci. Technol. \textbf{18}(1) (2015) 32--41]. Arulperumjothi et al. recently corrected this definition and redefined the network in [Appl. Math. Comput. \textbf{452} (2023) 128037]. Our research investigates that the fractal cubic network is a \textit{rooted product} of two graphs. We try to determine its domination and resolving domination parameters, which could be applied to resource location and broadcasting-related problems.
\hspace*{0.5cm}
\end{abstract}
\justifying

\hspace{0.5cm}\textbf{Keywords:} {\small resolving sets; dominating sets; resolving domination; fractal cubic network}\\ 
{\bf AMS Subject Classification:} 05C12 $\cdot$ 05C69

\section{Introduction and Motivation}

Graph frameworks, consisting of a network of connections, are extensively employed in various dynamic, circuit-related, genetic, and chemical systems. It aids in modeling the transmitters of brain systems. The structure of a graph consists of vertices and edges. Each vertex describes a node in the network, and each edge represents a link between nodes. The interconnection network is a sophisticated linkage between the array of processors and the communication pathways connecting any two distinct processors. The network is employed to exchange data across processors in parallel network computation. Network dependability is the most critical component in designing the network's geometry. The interconnection network is an essential subsystem for high-performance computing systems and data centers \cite{AkKr89}.
Consequently, contemporary suggestions for the interconnection network must ensure minimal latency overhead and maximal transmission bandwidth. Occasionally, it is impractical to execute and evaluate new designs physically; they must be examined and validated using data-driven software tools, such as network simulators for connectivity information graphs. Interconnection networks include numerous primary and intrinsic uses in system-designed architectures, although they are predominantly employed in parallel computing architecture.

Interconnection networks with multiprocessors are often crucial for connecting many reliably replicated processors. Message passing is predominantly employed in place of shared memory to provide comprehensive transmission and synchronization across processors for planned execution.  The graph $\Gamma$ can be shown such that every pair of vertices is directly linked via transmission links. The metrics employed to assess the efficacy of the structure include bisection width, broadcasting duration, degree, diameter, and fault tolerance \cite{AkKr89}. Let $\Gamma$ be a connected graph with $V(\Gamma)$ (vertex set) and $E(\Gamma)$ (edge set), respectively. For any $s\in V(\Gamma)$, $N_{\Gamma}(s)=\{t\in V(\Gamma) \enspace | \enspace st\in E(\Gamma)\}$ and $N_{\Gamma}[s] = N_{\Gamma}(s)\cup\{s\}$ as an \textit{open} and \textit{closed neighborhoods} of $s$ respectively. If $\Gamma$ is understood we denote $N_{\Gamma}(s) (N_{\Gamma}[s])$ as $N(s)(N[s])$ respectively. The \textit{degree} of $s$ is defined by $d_{\Gamma}(s) =|N_{\Gamma}(s)|$. $\Delta$ symbolizes the greatest degree. For $k$-regular graph, $ \Delta = k=\delta$, where $\delta$ is the least degree. For a subset $D$ of $V(\Gamma)$, $\Gamma[D]$ is the \textit{induced sub graph} of $\Gamma$. For a graph $\Gamma$ with $n$ vertices, the \textit{order} is computed as $n$, which could be referred to as $|V(\Gamma)|=n$. Denote $\mathbb{N}_n:=\{1,2,\ldots, n\}$. Two vertices $e$ and $f$ are said to be \textit{false twins} if $N(e)=N(f)$ and \textit{true twins} if $N[e]=N[f]$. Two vertices in the connected graph $\Gamma$ are said to be \textit{twins} if they are either \textit{true or false twins}. A set $T \subseteq V(\Gamma)$ is said to be a \textit{open (closed)} twin set if every pair of vertices in $T$ are \textit{false (true)} twins in $\Gamma$. 

\section{Resolving and Dominating Sets}

Resolving sets provide a mechanism for identifying the origin of diffusion in a network. Identifying the source of a disease disseminated through a community could be exceedingly beneficial in numerous contexts. Although the resolving set  (\textbf{RS}) provide a solution when inter-node intervals and starting spread time are established, resolvability must be broadened to include random start timings and random nodal communication delays.

For $R=\{m_{1},m_{2}, \ldots ,m_{t}\}\subseteq V(\Gamma)$, the \textit{code} of $j\in V(\Gamma)$ with respect to $R$ is defined as the $t$-vector

\begin{equation*}
	C_{R}(j)=\Big(d(j,m_{1}),d(j,m_{2}), \ldots ,d(j,m_{t})\Big),
\end{equation*}

where $d(j,k)$ denote the distance between $j$ and $k$. The collection $R$ is termed a \textbf{RS} for $\Gamma$ if, for every pair of different vertices $g,h \in V(\Gamma)$, the codes $C_{R}(g)$ and $C_{R}(h)$ are unique. A set $R$ is a \textbf{RS} for $\Gamma$ if, for any pair of vertices $g$ and $h$ in $V(\Gamma)$, $\exists$ $r \in R$ such that the distances $d(g,r)$ and $d(h,r)$ are not equal. Consult Figure~\ref{RS}. Among all potential \textbf{RS} for $\Gamma$, the ones with the smallest size are of particular interest, referred to as a \textit{basis}. The size of the smallest \textbf{RS} is referred to as the \textit{metric dimension} of $\Gamma$, indicated by $\dim(\Gamma)$. The problem of finding resolving sets remains NP-complete for general graphs \cite{HaMe76}.

\begin{figure}[H]
	\centering
	\includegraphics[scale=0.6]{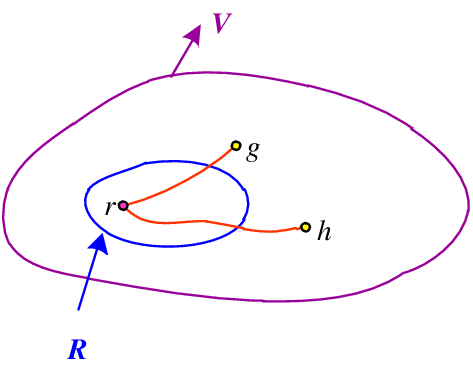}  
	\caption{Resolving set $R$} \label{RS}
\end{figure}

A \textit{dominating subset} $D$ of $V(\Gamma)$, in which every member of $\Gamma$ is in $D$ or adjacent to at least one vertex of $D$. The least size of $D$ is known as the \textit{domination number} and is represented as $\gamma$ \cite {HaHeSl13}. By imposing constraints on a $D$, several domination parameters are laid. Some of them are independent, total, connected, double and 2-domination. A dominating set \textbf{(DS)} is \textit{independent dominating set} \textbf{(IDS)} if $\Gamma[D]$ is a null graph. We say \textbf{DS} as a \textit{total dominating set} \textbf{(TDS)} if each member in $V$ is connected to some member belonging to $D$ and \textit{connected dominating set} \textbf{(CDS)} if $\Gamma[D]$ is connected. If $|N[v] \cap D| \geq 2$ for any $v \in V(\Gamma)$, then $D$ is a \textit{double dominating set} \textbf{(DDS)} of $\Gamma$ and if $|N(v) \cap D| \geq 2$ for $v \in V(\Gamma) \setminus D$, then $D$ is a 2-\textit{dominating set} \textbf{(2DS)} of $\Gamma$. The least size of these sets were respectively denoted as $\gamma_i$, $\gamma_{t}$, $\gamma_{c}$, $\gamma_{\times 2}$ and $\gamma_{2}$. Let $U,V \subseteq V(\Gamma)$, where $(U,V)$ is an ordered pair of disjoint sets $U$ and $V$ is a \textit{quasi-double dominating pair} of $\Gamma$ if $U \cup V \in D_2$-set and $V \in D_{\times2}$-set of $\Gamma - U$. Then \textit{quasi-double domination number} denoted by $\gamma_{q\times2}$ is $\min\{|U|+|V|: U\cup V \in D_2$-set and $V \in D_{\times2}$-set of $(\Gamma - U)\}$. Domination remains NP-complete and has application in numerous areas like communication systems\cite{BrMaYa19}, resource location problems\cite{BaOrRe15, FaAsHe12}, social networks\cite{KeCo88}, and models of biological networks \cite{MiMeBo11, NaAk16, GaWuSi18}. Determining the domination number of a $r$-dimensional hypercube ($Q^r$) is fundamental in coding, graph theory and circuit-related sciences. Domination invariants can demonstrate types of scattering-related problems on multi-level interconnection networks, where the hypercube in opportunity serves as an essential paradigm for graph networks.

In graph theory literature, various parameters on resolving sets and dominating sets were identified and extensively studied. They are applied to parallel computing architectures and neural networks for solving resource location, image processing, and chemical-related problems. Specific variants of resolving sets are path resolving set, connected resolving set, independent resolving set, one-factor resolving set and one-size resolving set. In the technological era, combining the concepts and logic creates new variations for solving higher-level problems, and one such idea is the \textit{resolving domination} \textbf{(RD)} in which the set $D \subseteq V(\Gamma)$ is both resolving and dominating set. The \textit{resolving domination number} denoted by $\gamma_r$ is the least size of resolving dominating set \cite{BrChDu03, HeMoPe19}. Specific variants of resolving domination are discussed, including resolving independent domination, resolving total domination and resolving connected domination. The least cardinality of these sets was represented as $\gamma_{ri}, \gamma_{rt}$ and $\gamma_{rc}$ respectively. We abbreviate resolving dominating set as \textbf{RDS}, resolving independent dominating set as \textbf{RIDS}, resolving total dominating set as \textbf{RTDS} and resolving connected dominating set as \textbf{RCDS} respectively. Since domination and resolving set problems remain NP-complete, resolving domination problems are also a class of NP-complete problems for general graphs. Brigham et al. in 2003, introduced the concept of \textbf{RD} and provided the lower and upper bounds, \textbf{RD} number of standard graphs, relationship with the diameter, order, clique number and characterized \textbf{RD} number for $n-1$\cite{BrChDu03}. Monsanto and Rara established \textbf{RD} number of certain graphs under some binary operations \cite{MoRa23}. In 2015, resolving connected domination was introduced by Naji and Soner, and proved primary results on resolving connected domination \cite{NaSo15}. For further resolving and domination-related problems, the readers could refer to \cite{PrFlAr18, PrJeAr23, PrMaDa24, PrMaAr22, PrJa24, PrJaKl24, PrDeEl22, PrDeAr22, PrArHe24, HeMoPe19}.

\section{Fractal Cube: A Fascinating Variant of Hypercube}

The examination of self-similarity and fractality in discrete systems, especially complex networks, has intensified. This increase in interest is driven by theoretical advancements in complex network theory and the practical requirements of real-world applications. Translating the ideas of fractal geometry from general topology, which addresses continuous or infinite objects, to finite structures in a mathematically rigorous manner presents a significant difficulty. The investigation of fractals enhances our comprehension of the intrinsic beauty and intricacy of the natural world while also possessing extensive applicability across multiple scientific fields, including biology, physical sciences, computer networks,  and chemical graph theory. Sierpiński-type structures have been thoroughly investigated in fractal theory and application, with substantial research illustrating its significance and utility across various domains \cite{EsRo19, PrJaKl24}. For more on recent work of fractal networks can be found in \cite{ZeHuGu23, Al20, ImGaFa17, ImJa20, GhNaIs23, AlIm24}. 

\begin{figure}[H]
	\centering
	\includegraphics[scale=0.6]{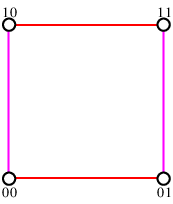}
	\quad \quad  \quad
	\includegraphics[scale=0.6]{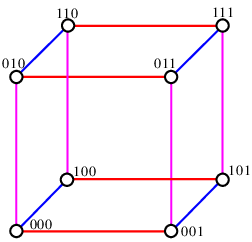}
	\quad \quad \quad
	\includegraphics[scale=0.6]{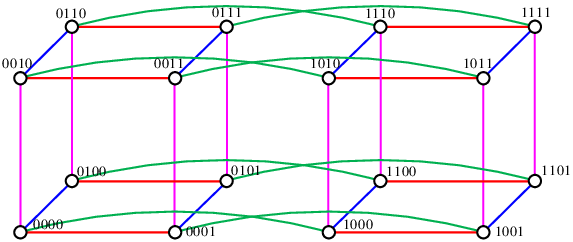}
	\caption{Various dimensions of hypercubes}
	\label{Q}
\end{figure}

The cube ($Q^n$) is a prevalent architecture characterised by its regularity, transit efficiency, recursive  configuration, symmetry, and elevated connectivity. See Figure \ref{Q} for the hypercube of dimensions 2, 3 and 4. In recent years, hypercubes have been extensively studied for their diverse features \cite{HaHaWu88}.

\begin{table}[H]
	\centering
	\caption{Literature on variants of hypercubes} \label{hyp}
	\scalebox{0.80}{
		\begin{tabular}{|c|l|c|}
			\hline
			S.No                     & \multicolumn{1}{c|}{Variants of hypercubes} & References              \\ \hline
			1                        & Exchanged hypercubes                        & \cite{LoHsPa05}         \\ \hline
			2                        & Folded hypercubes                           & \cite{ElLa91, ZhLiXu08} \\ \hline
			3                        & Crossed cubes                               & \cite{Ef92}             \\ \hline
			4                        & Exchanged crossed cubes                     & \cite{FaJi07, LiMuLi13} \\ \hline
			5                        & Twisted cubes                               & \cite{AbPa91, ChWaHs99} \\ \hline
			6                        & M\"{o}bius cubes                       & \cite{CuLa95}         \\ \hline
			7                        & Spined cubes                               & \cite{ZhFaJi11}         \\ \hline
			8                        & Locally twisted cubes                                & \cite{YaEvMe05}         \\ \hline
			9                        & Shuffle cubes                                & \cite{LiTaHs01}           \\ \hline
			10                       & Augmented cubes                             & \cite{ChSu02}           \\ \hline
			11                       & Hierarchical cubic networks                 & \cite{GhDe95, YuPa98}   \\ \hline
			12 & Folded hierarchical cubic networks          & \cite{DuChFa95}         \\ \hline
	\end{tabular}}
\end{table}

In Intel's hypercube, the new node functions as the cube manager, with direct links to all processors within the system, analogous to a physical machine. In this case, the bisection width of this architecture is $2^{n-1}$. In parallel architecture, hypercube fails to have some property. For example it has high bisection width and non-constant node degree. The literature presents numerous variations as shown in Table \ref{hyp}. 

Although the hypercube versions listed above have been the subject of several research studies, none of them, except the fractal cubic network, have examined their problem and its resolving number. Several variations on hypercube have been proposed in the literature, but unfortunately, none of them has less bisection width and constant node degree. Fractal cubic network  \textbf{(FCN)} is an ideal architecture, for its constant node degree and low bisection width.  While the concept of this structure is ambiguous in~\cite{KaSe15}, it was rectified in~\cite{ArKlPr23}. Motivated by this, we recently examined power domination and resolving power domination \cite{PrArHe24} for this recently introduced hypercube variation fractal cubic network. The authors of \cite{ArKlPr23},  characterise $FCN(0)$ as a cycle of vertex set with cardinality four: $00$, $01$, $11$, and $10$. For \( l \ge 1 \), and define \( FCN(l) \) as follows:

An $l$-dimensional \textbf{FCN} is defined as $FCN(l) = (V_{1}(l), E_{1}(l))$, $l > 0$, and can be designed as follows
\[
FCN(l) = 11 \mathbin\Vert FCN(l-1)\cup 01 \mathbin\Vert FCN(l-1)\cup 10 \mathbin\Vert FCN(l-1) \cup 00 \mathbin\Vert FCN(l-1),
\]
where
\[
V_{1}(l) = 11 \mathbin\Vert V_{1}(l-1)\cup 01 \mathbin\Vert V_{1}(l-1)\cup 10\mathbin\Vert V_{1}(l-1)\cup 00\mathbin\Vert V_{1}(l-1)
\]
and
\[
\begin{array}{lcl}
	E_{1}(l) & = & 11 \mathbin\Vert E_{1}(l-1) \cup 01 \mathbin\Vert E_{1}(l-1)\cup 10\mathbin\Vert E_{1}(l-1)\cup 00\mathbin\Vert E_{1}(l-1) \\
	& & \, \cup \, \{(00100101\ldots01,10100101\ldots01), (10100101\ldots01,11100101\ldots01) \} \\
	& & \, \cup \, \{(11100101\ldots01,01100101\ldots01),(01100101\ldots01,00100101\ldots01)\}.
\end{array}
\]
Figure~\ref{fcn} respectively, denotes $FCN$ dimensions of 0,1 and 2 and $\mathbin\Vert$ denotes the concatenation operator and each string is of length $2l+2$ for dimension of $l$.

Operation on graphs and design of new networks, which could be used as an alternative to multistage interconnection networks. Several operations on graphs deal with merging the vertex and adding additional edges. A few operations on graphs are listed here: Cartesian product, strong product, corona product, lexicographic product, tensor product and rooted product. Here we concentrate on the rooted product operation and is defined as follows. A graph is usually called rooted if one of its nodes is designated as a root to set it apart from the other nodes. Let $\Omega_i$, $i \in \mathbb{N}_n$ be the $n$ copies of $\Omega$ and let $\Gamma$ be an $n$ ordered graph. The graph $\Gamma \circ_v \Omega$ is generated by assigning a unique vertex $v$ to each $\Omega_i$ on the $i^{\text{th}}$ node of $\Gamma$ and the graph obtained is the rooted product of $\Gamma$ and $\Omega$ respectively. It is interesting to note that the hypercube is operated by the Cartesian product of $K_2$ and its lower dimensional hypercube. That is $Q^n = K_2 \times Q^{n-1}$. The new variant of hypercube \textbf{FCN}, is constructed by a rooted product of $C_4$ and its lower dimension of \textbf{FCN}.  Thus \textbf{FCN} can be represented as $FCN(l) = C_4 \circ_v FCN(l-1), v \in \{0010(01)^{l-1}, 0110(01)^{l-1}, 1010(01)^{l-1}, 1110(01)^{l-1} \}$. Godsil and McKay \cite{GoMc78} coined rooted product operation of two graphs, and further, the readers could refer to \cite{KuLeYe16, CaRo20, CaEs23, CaRuSe24} for detailed literature on domination variants in the rooted product of graphs. 

\begin{figure}[H]
	\centering
	\includegraphics[scale=0.55]{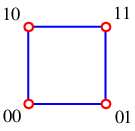}
	\quad \quad  \quad
	\includegraphics[scale=0.55]{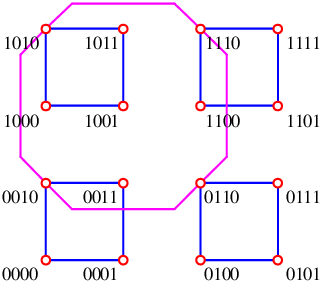}
	\quad \quad \quad
	\includegraphics[scale=0.55]{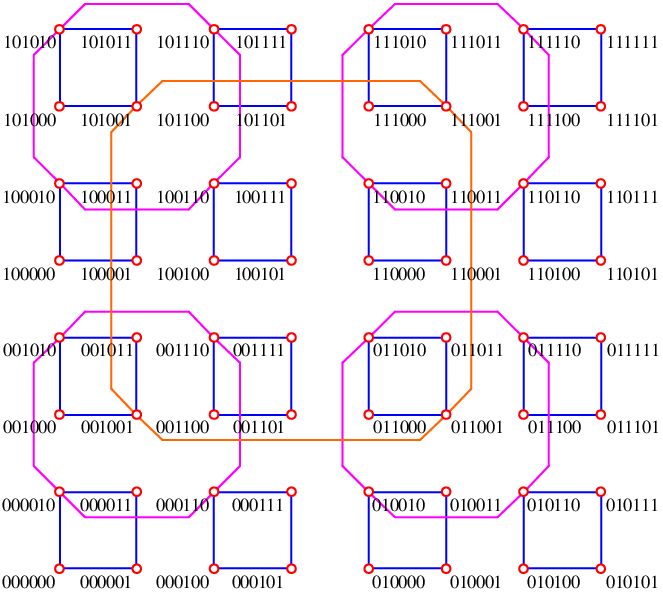}
	\caption{Various dimensions of $FCN$}
	\label{fcn}
\end{figure}

\section{Main Results}

\begin{Theorem}{\rm \cite{PrFlAr18}} \label{22}
	Let $\Gamma$ be a connected graph with twin sets $T_k$, $1 \leq k \leq p$, then $\dim(\Gamma) \geq \sum_{i=1}^{p}|T_k|-p$. 
\end{Theorem}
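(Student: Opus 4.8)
The plan is to reduce everything to a single structural fact about twins: a pair of twins cannot be told apart by any landmark other than themselves. Concretely, I would first prove the auxiliary claim that if $u$ and $v$ are twins in $\Gamma$, then $d(u,w)=d(v,w)$ for every $w\in V(\Gamma)\setminus\{u,v\}$. The cleanest route is to observe that the transposition $\sigma$ that swaps $u$ and $v$ while fixing all other vertices is an automorphism of $\Gamma$. Indeed, in the false-twin case $N(u)=N(v)$ forces $u,v$ to be non-adjacent with identical open neighbourhoods, so $\sigma$ preserves every edge; in the true-twin case $N[u]=N[v]$ forces $u,v$ to be adjacent with the same remaining neighbours, and again $\sigma$ preserves adjacency. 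Since automorphisms are distance-preserving and $\sigma$ fixes each $w\notin\{u,v\}$, the equality $d(u,w)=d(v,w)$ follows at once.

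Next, I would show that any resolving set $R$ must contain all but at most one vertex of each twin set; that is, $|R\cap T_k|\ge |T_k|-1$ for every $k$. I argue by contradiction: if some $T_k$ contained two distinct vertices $u,v\notin R$, then every landmark $r\in R$ would satisfy $r\ne u$ and $r\ne v$, so the auxiliary claim gives $d(u,r)=d(v,r)$ for all $r\in R$. Consequently $C_{R}(u)=C_{R}(v)$, contradicting the assumption that $R$ resolves $\Gamma$. Note that this step uses the auxiliary claim in full strength: even a landmark lying inside the same twin set $T_k$ (but distinct from $u$ and $v$) fails to separate $u$ from $v$, which is exactly why the local bound is $|T_k|-1$ rather than something weaker.

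Finally, taking the twin sets to be pairwise disjoint and summing the local bounds over $k=1,\dots,p$ yields
\begin{equation*}
	|R|\ \ge\ \sum_{k=1}^{p}|R\cap T_k|\ \ge\ \sum_{k=1}^{p}\bigl(|T_k|-1\bigr)\ =\ \sum_{k=1}^{p}|T_k|-p .
\end{equation*}
Since this holds for every resolving set, in particular for a basis, I obtain $\dim(\Gamma)\ge\sum_{k=1}^{p}|T_k|-p$, as claimed.

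The main obstacle I anticipate is the auxiliary claim, and within it the need to treat the false-twin and true-twin cases separately when verifying that $\sigma$ is an automorphism; the adjacency bookkeeping differs slightly between the non-adjacent and adjacent pairs, and one must check that no edge incident to $u$ or $v$ is broken or created by the swap. Everything downstream is then a short contradiction argument together with a disjoint-union count, so once the automorphism fact is secured the theorem follows routinely.
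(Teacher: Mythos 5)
Your proposal is correct, and there is nothing in the paper to compare it against: the paper does not prove Theorem~\ref{22} at all---it is imported from \cite{PrFlAr18} and used as a black box (e.g.\ in the proof of Theorem~\ref{23})---so the only question is whether your blind argument is sound, and it is. Your three steps are the standard route to this bound: (i) twins $u,v$ satisfy $d(u,w)=d(v,w)$ for all $w\notin\{u,v\}$; (ii) hence a resolving set $R$ omits at most one vertex of each twin set, i.e.\ $|R\cap T_k|\ge |T_k|-1$; (iii) summing over pairwise disjoint $T_k$ gives $|R|\ge\sum_{k=1}^{p}|T_k|-p$. Your treatment of (i) via the transposition automorphism is clean, and the case split is exactly the required bookkeeping: false twins are forced to be non-adjacent (else $u\in N(u)$) and true twins forced adjacent, and in either case the swap preserves all edges. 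A common alternative avoids automorphisms entirely: for $w\ne u,v$, a $u$--$w$ geodesic leaves $u$ through a vertex of $N(u)\setminus\{v\}\subseteq N(v)$, giving $d(v,w)\le d(u,w)$, and symmetry yields equality; this is marginally shorter but your route generalizes better (it shows twins are interchangeable under \emph{any} distance-based invariant). Two details you handled that are genuinely load-bearing: you used (i) in full strength so that a landmark lying inside $T_k$ still fails to separate two omitted vertices of $T_k$, which is what makes the local bound $|T_k|-1$ rather than something weaker; and you made explicit the pairwise-disjointness of the $T_k$ needed for $\sum_{k}|R\cap T_k|\le|R|$, an assumption the theorem statement leaves implicit (the $T_k$ are understood to be distinct twin classes, which is also how the bound is applied to the disjoint $4$-cycles of $FCN(l)$ later in the paper). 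No gaps.
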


\begin{Theorem}{\rm \cite{ArKlPr23}}\label{mmdfcn}
	For $l>0$,  $\dim(FCN(l))=4^{l}$.
\end{Theorem}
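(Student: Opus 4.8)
The plan is to sandwich $\dim(FCN(l))$ between $4^{l}$ and $4^{l}$ by producing a matching lower and upper bound, using the description $FCN(l) = C_4 \circ_v FCN(l-1)$. First I would view $FCN(l)$ as $4^{l}$ bottom-level $4$-cycles, one for each string $q \in \{0,1\}^{2l}$, on the vertices $q00, q01, q11, q10$, glued together by the $C_4$'s that the recursion adds among the root vertices at each level. The first task is to locate all twin sets. Since the innermost rooted product attaches every bottom cycle to the rest of the graph through its vertex $q10$, each root created by the construction ends in $10$ (innermost roots) or in $01$ (all higher roots, of the form $\mathrm{prefix}\,\|\,10(01)^{m-1}$). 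Consequently the vertices $q00$ and $q11$ are never roots, have degree $2$, and satisfy $N(q00)=N(q11)=\{q01,q10\}$, so $\{q00,q11\}$ is a false-twin pair. A short check shows these are the only twins: roots have degree at least $4$ and cannot be twins of degree-$2$ vertices, no two roots share a neighbourhood, and $q00,q11$ are the only two vertices whose neighbourhood equals $\{q01,q10\}$. Thus there are exactly $p=4^{l}$ twin pairs, each of size $2$.

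For the lower bound I would apply Theorem~\ref{22} to these $4^{l}$ pairs:
\[
\dim(FCN(l)) \ge \sum_{k=1}^{4^{l}} |T_k| - 4^{l} = 2\cdot 4^{l} - 4^{l} = 4^{l}.
\]

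For the upper bound I would exhibit the set $W=\{q00 : q\in\{0,1\}^{2l}\}$, which selects one vertex from each twin pair and has $|W|=4^{l}$, and prove it resolves $FCN(l)$. The cleanest route is induction on $l$ together with the rooted-product distance structure. Because two copies communicate only through their roots, a path leaving a copy must cross its root twice, so within-copy distances in $FCN(l)$ coincide with distances in $FCN(l-1)$; hence the prefixed copies of a resolving set of $FCN(l-1)$ (and $W$ restricts to exactly such copies) resolve each copy internally. It then remains to separate vertices in distinct copies by writing $d(y,\ell)=d(y,r_B)+d(r_B,r_A)+d(r_A,\ell)$ for a landmark $\ell$ in copy $A$ and a vertex $y$ in copy $B$, and reading off from the code first which bottom cycle a vertex lies in and then its offset within that cycle. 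For $l=1$ this reduces to a direct $16$-vertex verification: with $W=\{0000,0100,1100,1000\}$ all codes $C_W(\cdot)$ are pairwise distinct.

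The main obstacle is precisely this upper-bound verification. The lower bound is bookkeeping once the twins are pinned down, but ruling out a collision requires controlling distances across the several nested levels of inter-cycle $C_4$'s; the real danger is a pair of vertices in different copies whose distance vectors to all $4^{l}$ landmarks happen to agree. I would handle this through the recursive distance decomposition above, tracking for each vertex both its \emph{address} (which bottom cycle, recovered from its nearest landmarks) and its \emph{offset} inside that cycle (recovered from its own landmark $q00$ and the neighbours $q01,q10$), and verifying that these two pieces of data are jointly reconstructible from the code, which forces distinct codes and hence $\dim(FCN(l))=4^{l}$.
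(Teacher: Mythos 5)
You should know at the outset that this paper contains no proof of Theorem~\ref{mmdfcn}: it is quoted verbatim from \cite{ArKlPr23}, so there is no in-paper argument to compare yours against, and the citation is standing in for exactly the verification you attempt. That said, your lower bound is correct and complete, and it matches the twin machinery the paper itself sets up in Theorem~\ref{22}: since $|V(FCN(l))|=4^{l+1}$, every binary string of length $2l+2$ is a vertex; the edges added at recursion level $j$ touch only vertices with suffix $10(01)^{j-1}$; hence $q00$ and $q11$ retain degree $2$ with $N(q00)=N(q11)=\{q01,q10\}$, giving $4^{l}$ disjoint false-twin pairs and $\dim(FCN(l))\ge 2\cdot 4^{l}-4^{l}=4^{l}$. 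One small correction to your twin census: not every $q01$ is an ordinary degree-$2$ cycle vertex --- when $q$ ends in $10(01)^{j-2}$ the vertex $q01$ is a level-$j$ root of degree $4$ --- but this does not disturb the pairs $\{q00,q11\}$, so the bound stands.

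The genuine gap is in the upper bound, and it sits precisely where you flagged it. The inductive scheme ``within-copy distances are preserved, plus $d(y,\ell)=d(y,r_B)+d(r_B,r_A)+d(r_A,\ell)$'' cannot by itself separate cross-copy pairs: a vertex $u$ inside copy $A$ whose distances to all copy-$A$ landmarks are a uniform shift of those of the root $r_A$ is code-identical, on those landmarks, to every external $y$ with $d(y,r_B)+d(r_B,r_A)$ equal to that shift. Concretely, already in $FCN(1)$ with your $W=\{0000,0100,1100,1000\}$ one has $d(0011,0000)=2=d(1010,0000)=d(0110,0000)$, so $u=0011$ agrees with the external roots $1010$ and $0110$ on every landmark of copy $00$ and is separated only by landmarks in the other copies; this also defeats your proposed reconstruction of the ``address'' from nearest landmarks, since nearest landmarks need not identify the bottom cycle. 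What is missing is a strengthened induction hypothesis argued jointly over landmarks in several copies --- for instance, carrying along each vertex's distance to the root of its copy together with its within-copy code, and doing the case analysis over the $C_4$ of roots --- and this is nontrivial bookkeeping, not a formality. Your base case does check out (I verified that all sixteen codes with respect to $W$ are pairwise distinct in $FCN(1)$), and the overall strategy (twin lower bound plus a one-per-twin-pair set) is surely the right one, but as written the inductive step is a plan rather than a proof, so the statement remains unestablished by your argument alone.
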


\begin{Theorem} {\rm \cite{BrChDu03}} \label{1}
	For an isolate-free graph $\Gamma$, $\max\{ \gamma(\Gamma), \dim(\Gamma) \} \leq \gamma_r(\Gamma) \leq \gamma(\Gamma)+ \dim(\Gamma)$.
\end{Theorem}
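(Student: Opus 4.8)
The plan is to establish the two inequalities separately, both directly from the definitions, exploiting the fact that a resolving dominating set is by definition simultaneously a resolving set and a dominating set, and that each of these two properties is monotone under taking supersets.

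For the lower bound, I would let $S$ be any minimum resolving dominating set of $\Gamma$, so that $|S| = \gamma_r(\Gamma)$. Since $S$ is in particular a dominating set, its cardinality is at least the domination number, giving $\gamma_r(\Gamma) \geq \gamma(\Gamma)$. Likewise, since $S$ is also a resolving set, $|S| \geq \dim(\Gamma)$. Combining the two yields $\gamma_r(\Gamma) \geq \max\{\gamma(\Gamma), \dim(\Gamma)\}$.

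For the upper bound, I would fix a minimum dominating set $D$ with $|D| = \gamma(\Gamma)$ and a minimum resolving set $R$ with $|R| = \dim(\Gamma)$, and consider their union $D \cup R$. The key observation is that both domination and resolvability are preserved under enlargement: because $D \subseteq D \cup R$, every vertex of $\Gamma$ is dominated by $D \cup R$; and because $R \subseteq D \cup R$, every pair of distinct vertices is distinguished by some vertex of $D \cup R$. Hence $D \cup R$ is itself a resolving dominating set, whence $\gamma_r(\Gamma) \leq |D \cup R| \leq |D| + |R| = \gamma(\Gamma) + \dim(\Gamma)$.

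There is no genuine obstacle here; the whole argument is a short monotonicity/superset argument, and the hardest step is merely bookkeeping the two membership inclusions correctly. The only point requiring a line of care is that the parameters $\gamma(\Gamma)$, $\dim(\Gamma)$, and $\gamma_r(\Gamma)$ are all well defined, i.e. that a resolving dominating set exists at all; this is immediate since $V(\Gamma)$ itself is simultaneously dominating and resolving, while the isolate-free hypothesis guarantees the standard domination quantities behave as expected. I would close by noting that the two bounds are sharp: the lower bound is attained precisely when a single optimal set can serve both roles, and the upper bound is attained when an optimal dominating set and an optimal resolving set are forced to be essentially disjoint.
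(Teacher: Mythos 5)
Your proposal is correct: the lower bound follows because any minimum resolving dominating set is simultaneously a dominating set and a resolving set, and the upper bound because the union of a minimum dominating set and a minimum resolving set is again both, and this is exactly the standard argument. Note that the paper itself states this result without proof, importing it verbatim from \cite{BrChDu03}, so there is no internal proof to compare against; your argument matches the classical one from that source, and your observation that the isolate-free hypothesis plays no real role (since $V(\Gamma)$ is always a resolving dominating set) is accurate --- it is inherited from the conventions of the original paper rather than needed for these two inequalities.
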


\begin{Theorem} {\rm \cite{KuLeYe16}} \label{2}
	Let $\Gamma$ be an isolate-free graph and $n \geq 2$. Then for any graph $\Omega$ with root $v$ and $|V(\Omega)|\geq 2$, $\gamma(\Gamma \circ_v \Omega) \in \{n\gamma(\Omega), n\gamma(\Omega)-n+ \gamma(\Gamma) \}$.
\end{Theorem}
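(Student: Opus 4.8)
The plan is to analyze a minimum dominating set copy-by-copy and to reduce everything to a single auxiliary parameter of the rooted graph $(\Omega,v)$. Let $u_1,\dots,u_n$ be the vertices of $\Gamma$, and let $\Omega_i$ be the copy of $\Omega$ whose root is identified with $u_i$, so $V(\Gamma\circ_v\Omega)$ is the disjoint union of the $V(\Omega_i)$ glued only along the roots $u_i$. For any dominating set $S$ put $S_i=S\cap V(\Omega_i)$. The key local observation is that every non-root vertex of $\Omega_i$ has all its neighbours inside $\Omega_i$, so $S_i$ must dominate $V(\Omega_i)\setminus\{u_i\}$. I would introduce $\rho(\Omega,v)$, the least size of a set dominating $V(\Omega)\setminus\{v\}$; then $|S_i|\ge\rho(\Omega,v)$ for every $i$, and a one-line argument (adding $v$ to such a set dominates all of $\Omega$) shows $\rho(\Omega,v)\in\{\gamma(\Omega)-1,\gamma(\Omega)\}$. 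This dichotomy is exactly what produces the two candidate values.

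First I would prove the universal upper bound $\gamma(\Gamma\circ_v\Omega)\le n\gamma(\Omega)$ by placing a $\gamma$-set of $\Omega$ inside each copy; since each copy is then dominated in full (roots included), this is a dominating set. For the harder direction I would split on $\rho(\Omega,v)$. If $\rho(\Omega,v)=\gamma(\Omega)$, then $|S_i|\ge\gamma(\Omega)$ for all $i$ forces $|S|\ge n\gamma(\Omega)$, which with the upper bound gives $\gamma(\Gamma\circ_v\Omega)=n\gamma(\Omega)$. If $\rho(\Omega,v)=\gamma(\Omega)-1$, I would first exhibit a dominating set of size $n\gamma(\Omega)-n+\gamma(\Gamma)$: take a $\rho$-set of size $\gamma(\Omega)-1$ in each copy, which leaves every root undominated, and then add to $S$ the roots forming a minimum dominating set $T$ of $\Gamma$. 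Isolate-freeness of $\Gamma$ guarantees every root is then dominated (each root is in $T$ or adjacent in $\Gamma$ to one), and the count is $n(\gamma(\Omega)-1)+|T|=n\gamma(\Omega)-n+\gamma(\Gamma)$.

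The matching lower bound is the crux. For an arbitrary minimum dominating set $S$, classify each index $i$ by whether its root is dominated from inside its own copy: set $W=\{i : N[u_i]\cap S_i\neq\emptyset\}$ and $B'=\mathbb{N}_n\setminus W$. For $i\in W$ the set $S_i$ dominates all of $\Omega_i$, so $|S_i|\ge\gamma(\Omega)$, whereas for $i\in B'$ only $|S_i|\ge\gamma(\Omega)-1$ is guaranteed by the $\rho$-bound. Summing gives $|S|\ge|W|\gamma(\Omega)+|B'|(\gamma(\Omega)-1)=n\gamma(\Omega)-|B'|$. The decisive step is to show $W$ is a dominating set of $\Gamma$: every root in $B'$ is undominated internally, hence must be dominated by some root $u_j\in S$ adjacent to it in $\Gamma$, and any such $u_j$ lies in $W$ since it dominates itself; thus $W$ dominates $\Gamma$ and $|W|\ge\gamma(\Gamma)$, yielding $|S|\ge n\gamma(\Omega)-n+\gamma(\Gamma)$. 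Combined with the construction above (valid precisely when $\rho(\Omega,v)=\gamma(\Omega)-1$), this pins the value down to the second candidate.

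The main obstacle I anticipate is the lower-bound bookkeeping: arguing that $W$ is a dominating set of $\Gamma$ rests on the observation that external domination of a root can only come from another root that is itself internally dominated, and this is exactly where the isolate-free hypothesis enters (so that the added roots $T$ can dominate the remaining roots and no root is stranded). I would also be careful over the boundary bookkeeping—that $\rho(\Omega,v)\in\{\gamma(\Omega)-1,\gamma(\Omega)\}$ is genuinely exhaustive and that the two resulting bounds coincide with the stated pair—so that membership in the two-element set is established rather than merely that $\gamma(\Gamma\circ_v\Omega)$ lies in the interval between them.
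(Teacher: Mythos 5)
This statement is quoted in the paper as a known result from Kuziak, Lema\'{n}ska and Yero \cite{KuLeYe16}; the paper itself gives no proof, so your argument can only be checked against the original, and it is in fact a correct reconstruction of essentially that argument: the copy-wise decomposition $S_i=S\cap V(\Omega_i)$, the auxiliary parameter $\rho(\Omega,v)\in\{\gamma(\Omega)-1,\gamma(\Omega)\}$ (the least size of a set dominating $V(\Omega)\setminus\{v\}$), the case split on $\rho$, and the lower-bound step showing that the set $W$ of roots dominated from inside their own copies is a dominating set of $\Gamma$ are exactly the standard ingredients, and each step you give is sound: the counting $|S|\ge|W|\gamma(\Omega)+(n-|W|)(\gamma(\Omega)-1)=n\gamma(\Omega)-n+|W|\ge n\gamma(\Omega)-n+\gamma(\Gamma)$ matches the construction in the case $\rho=\gamma(\Omega)-1$, and the case $\rho=\gamma(\Omega)$ pins the value to $n\gamma(\Omega)$, so membership in the two-element set is genuinely established. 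Two small inaccuracies are worth flagging, neither of which breaks the proof. First, a minimum set dominating $V(\Omega)\setminus\{v\}$ need not ``leave every root undominated''; it may happen to dominate $v$ as well, but your construction $\bigcup_i D_i\cup T$ dominates regardless, so only the phrasing is off. Second, your closing remark mislocates the role of the isolate-free hypothesis: your argument never actually uses it. If $u_i$ is isolated in $\Gamma$, it cannot lie in $B'$ (an internally undominated isolated root would be undominated altogether), so $W$ still dominates $\Gamma$, and in the construction an isolated root simply belongs to $T$ and dominates itself; thus the dichotomy holds as proved even without the hypothesis, which is carried along from the cited statement rather than being load-bearing in your proof.
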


\begin{Theorem} {\rm \cite{KuLeYe16}} \label{20}
	Let $\Gamma$ be an isolate-free graph and $n \geq 2$. Then for any graph $\Omega$ with root $v$ and $|V(\Omega)|\geq 2$, $\gamma_i(\Gamma \circ_v \Omega) \in \{n\gamma_i(\Omega), n\gamma_i(\Omega)-n+ \gamma_i(\Gamma) \}$.
\end{Theorem}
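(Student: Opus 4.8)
The plan is to mirror the argument behind Theorem~\ref{2} for ordinary domination, while carefully tracking the extra independence constraint. Write $V(\Gamma)=\{u_1,\dots,u_n\}$, let $\Omega_1,\dots,\Omega_n$ be the copies of $\Omega$ with the root of $\Omega_i$ identified with $u_i$, and set $G=\Gamma\circ_v\Omega$. The structural fact I would use throughout is that every non-root vertex of a copy $\Omega_i$ has all of its neighbours inside $\Omega_i$, whereas the root $u_i$ additionally sees its $\Gamma$-neighbours. Hence, for any independent dominating set $D$ of $G$ and $D_i:=D\cap V(\Omega_i)$, the set $D_i$ is independent in $\Omega_i$ and must dominate $V(\Omega_i)\setminus\{u_i\}$. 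I would call a copy of \emph{type I} if $u_i\in D_i$ or $u_i$ is dominated from within $\Omega_i$, and of \emph{type J} otherwise. For a type~I copy, $D_i$ is itself a maximal independent set of $\Omega_i$, so $|D_i|\ge\gamma_i(\Omega)$; for a type~J copy, $u_i$ has no neighbour in $D_i$, so $D_i\cup\{u_i\}$ is independent and dominates all of $\Omega_i$, giving $|D_i|\ge\gamma_i(\Omega)-1$. Summing over the copies yields
\begin{equation*}
|D|\ \ge\ n\gamma_i(\Omega)-|J|,
\end{equation*}
where $J$ is the set of type~J copies.

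The core of the lower bound is then to show $|I|\ge\gamma_i(\Gamma)$, where $I=\{1,\dots,n\}\setminus J$. Let $R$ be the set of roots lying in $D$; each selected root dominates itself, so $R$ consists of type~I roots, $R$ is independent in $\Gamma$, and $R$ dominates every type~J root (each is dominated by an adjacent selected root). The vertices of $\Gamma$ left undominated by $R$ are therefore type~I roots outside $R$; extending $R$ by a minimum independent dominating set of the subgraph they induce produces an independent dominating set of $\Gamma$ of size at most $|I|$, so $\gamma_i(\Gamma)\le|I|$. Combined with the display above this gives $|D|\ge n\gamma_i(\Omega)-n+\gamma_i(\Gamma)$, the lower endpoint of the claimed set.

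For the upper bounds and the dichotomy I would distinguish whether $\Omega$ admits a \emph{root-saving configuration}: an independent set $S$ with $|S|=\gamma_i(\Omega)-1$ that dominates $V(\Omega)\setminus\{v\}$ while leaving $v$ unselected and undominated (equivalently, a minimum independent dominating set of $\Omega$ containing $v$ whose only private neighbour is $v$). If such an $S$ exists, I would fix a minimum independent dominating set $W$ of $\Gamma$, place the maximal independent set $S\cup\{v\}$ of size $\gamma_i(\Omega)$ in every copy rooted at a vertex of $W$, and place $S$ (with its root externally dominated by $W$) in every other copy; the only selected roots are those of $W$, which are independent in $\Gamma$, so the union is independent and dominating, of size exactly $n\gamma_i(\Omega)-n+\gamma_i(\Gamma)$. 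If $\Omega$ admits no such configuration, then every type~J copy already costs at least $\gamma_i(\Omega)$, so the estimate improves to $|D|\ge n\gamma_i(\Omega)$, and the matching construction places a minimum independent dominating set in each copy chosen so that the selected roots stay independent. In either case $\gamma_i(G)$ lands on one of the two stated values.

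The step I expect to be the main obstacle is the \emph{global} independence required when gluing the per-copy solutions: two adjacent roots can never be selected simultaneously, so the naive ``minimum independent dominating set in every copy'' need not be independent in $G$, and the choice of which roots to select must be synchronised with an independent dominating set of $\Gamma$. This coordination is absent in the purely dominating case of Theorem~\ref{2}, and it is precisely what forces the type~I/type~J bookkeeping above; reconciling the global independence constraint with the attainability of the value $n\gamma_i(\Omega)$ (in particular ensuring a suitable minimum local set exists at each selected root) is the delicate part that makes the lower and upper bounds meet.
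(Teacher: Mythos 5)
The paper never proves Theorem~\ref{20}; it is quoted without argument from \cite{KuLeYe16}, so your attempt has to stand on its own. Its first half does: the per-copy estimates ($|D_i|\ge\gamma_i(\Omega)$ for a type~I copy since an independent dominating set is a maximal independent set, $|D_i|\ge\gamma_i(\Omega)-1$ for type~J via $D_i\cup\{u_i\}$), the observation that the selected roots $R$ are independent in $\Gamma$ and dominate every type-J root, and the extension of $R$ inside the subgraph induced by the $R$-undominated type-I roots are all correct and yield the valid lower bound $\gamma_i(\Gamma\circ_v\Omega)\ge n\gamma_i(\Omega)-n+\gamma_i(\Gamma)$. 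Your Case~A is also sound: when a root-saving configuration $S$ exists, placing $S\cup\{v\}$ on the copies rooted at a $\gamma_i(\Gamma)$-set and $S$ elsewhere is independent and dominating, so equality with the lower value holds.

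The genuine gap is precisely the step you flagged at the end: in Case~B you assert a matching construction of size $n\gamma_i(\Omega)$ ``chosen so that the selected roots stay independent,'' but no such choice need exist, and the inequality $\gamma_i(\Gamma\circ_v\Omega)\le n\gamma_i(\Omega)$ is false in general. If every $\gamma_i(\Omega)$-set contains $v$, taking minimum sets in all copies selects every root, which is never independent for isolate-free $\Gamma$, and replacing the set in a copy by one avoiding its root can force strictly more than $\gamma_i(\Omega)$ vertices there. Concretely, take $\Gamma=K_2$ and $\Omega=P_3$ rooted at its centre $v$: there is no root-saving configuration (it would have to be an empty set dominating both leaves), the unique $\gamma_i(\Omega)$-set is $\{v\}$, and $\Gamma\circ_v\Omega$ is the double star, for which $\gamma_i=3$, e.g.\ $\{v_1,a_2,b_2\}$, while the statement predicts a value in $\{n\gamma_i(\Omega),\,n\gamma_i(\Omega)-n+\gamma_i(\Gamma)\}=\{2,1\}$. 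So not only does your Case-B bookkeeping fail; the statement as transcribed in this paper is itself false (the original in \cite{KuLeYe16} presumably carries hypotheses lost in transcription), because an extra invariant of $\Omega$ --- the minimum size of an independent set avoiding $v$ that dominates $V(\Omega)\setminus\{v\}$ --- unavoidably enters, in contrast with ordinary domination (Theorem~\ref{2}), where per-copy solutions never conflict across root edges. Your closing paragraph correctly diagnoses this obstruction; the error lies only in concluding it can always be reconciled. Note that the paper's application in Corollary~\ref{19} falls in your (correct) Case~A, which is why the smaller value is the one used for $FCN(l)$.
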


\begin{Theorem} {\rm \cite{CaRo20}}  \label{3}
	Let $\Gamma$ and $\Omega$ be two isolate-free graphs. For any $v \in V(\Omega)$, $\gamma_t(\Gamma \circ_v \Omega) \in \{n\gamma_t(\Omega)-n, \gamma(\Gamma)+n\gamma_t(\Omega)-n, \gamma_t(\Gamma)+n\gamma_t(\Omega)-n, n\gamma_t(\Omega) \}$.
\end{Theorem}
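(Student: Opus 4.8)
The plan is to exploit the disjoint-copy structure of the rooted product. Write $n=|V(\Gamma)|$, let $r_1,\dots,r_n$ be the roots (the vertices of $\Gamma$), and let $\Omega_1,\dots,\Omega_n$ be the attached copies, so that $V(\Gamma\circ_v\Omega)=\bigsqcup_{i=1}^n V(\Omega_i)$ is a genuine disjoint union in which $\Omega_i$ meets the rest of the graph only through its root $r_i$ via the edges of $\Gamma$. The first fact I would record is that for any total dominating set $D$, writing $S_i=D\cap V(\Omega_i)$, one has $|D|=\sum_{i=1}^n|S_i|$, and that every non-root vertex of $\Omega_i$ has all of its neighbours inside $\Omega_i$. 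Hence each $S_i$ must by itself totally dominate $V(\Omega_i)\setminus\{r_i\}$, while the root $r_i$ may instead be dominated ``from outside'' through a $\Gamma$-edge to another root lying in $D$.

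This reduces the theorem to two coupled subproblems: a local one inside each copy and a global one on $\Gamma$. For the local part I would prove a short lemma: if $p$ denotes the minimum size of a set $S\subseteq V(\Omega)$ totally dominating $V(\Omega)\setminus\{v\}$, then $\gamma_t(\Omega)-1\le p\le\gamma_t(\Omega)$. The upper bound is immediate, and the lower bound follows because adjoining one neighbour of $v$ to such an $S$ produces a genuine total dominating set of $\Omega$. I would then classify the optimal local sets according to whether $v\in S$ and whether $v$ is already totally dominated by $S$, since these two flags govern how the roots must be treated globally.

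Next I would exhibit the four candidate values as explicit constructions. Taking a minimum total dominating set of $\Omega$ in every copy yields a total dominating set of cost $n\gamma_t(\Omega)$. When $\Omega$ admits an economical local set of size $\gamma_t(\Omega)-1$ that contains the root, placing it in every copy leaves each root in $D$ but undominated internally; since $\Gamma$ is isolate-free the roots then totally dominate one another through $\Gamma$, giving cost $n\gamma_t(\Omega)-n$. The two intermediate values arise when the economical local sets force the roots to be dominated externally but do not place them in $D$: the family of roots supplying this external domination must form a dominating, respectively total dominating, set of $\Gamma$, contributing the extra terms $\gamma(\Gamma)$ and $\gamma_t(\Gamma)$ on top of the base cost $n\gamma_t(\Omega)-n$. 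The matching lower bound would follow by summing $|S_i|\ge\gamma_t(\Omega)-1$ over all copies and then arguing, case by case on the root flags above, that any deficit from the $n\gamma_t(\Omega)$ term is ``paid back'' either by the isolate-freeness argument at no extra cost, by a dominating set of $\Gamma$, or by a total dominating set of $\Gamma$.

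The main obstacle I anticipate is the bookkeeping in this final case analysis: one must show that the global root-domination requirement collapses to exactly one of ``nothing extra'', $\gamma(\Gamma)$, or $\gamma_t(\Gamma)$, and never to some other quantity. The delicate point is separating the $\gamma(\Gamma)$ case from the $\gamma_t(\Gamma)$ case, which hinges on whether the roots placed in $D$ to rescue the deficient copies are themselves already totally dominated (by an internal neighbour or by one another) or must in turn be totally dominated within $\Gamma$. Controlling the interaction between these root flags and the minimality of $\sum_i|S_i|$ — and verifying that mixing copy-types can never beat the four uniform strategies — is where the real work lies; the remainder is routine once the local lemma and the disjoint-union identity are in hand.
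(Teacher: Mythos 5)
You should first be aware that the paper does not prove this statement at all: Theorem~\ref{3} is imported verbatim, with attribution, from Cabrera Mart\'{\i}nez and Rodr\'{\i}guez-Vel\'azquez \cite{CaRo20} as one of several preliminary results later applied to $FCN(l)$. So there is no in-paper argument to compare yours against; the fair benchmark is the proof in \cite{CaRo20}, and measured against that, your outline follows essentially the same route. The per-copy decomposition $|D|=\sum_i |S_i|$ with $S_i=D\cap V(\Omega_i)$, the observation that non-root vertices of $\Omega_i$ keep all their neighbours inside $\Omega_i$ (so each $S_i$ must totally dominate $V(\Omega_i)\setminus\{r_i\}$ on its own), the local lemma $\gamma_t(\Omega)-1\le p\le\gamma_t(\Omega)$ proved by adjoining one neighbour of $v$, and the four explicit constructions realizing the four listed values are exactly the ingredients of the published proof.

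The one place where your proposal stops short is the case analysis you yourself flag as ``where the real work lies,'' and it is worth noting the small fact that makes it collapse, which you do not state: a set $S$ of size $\gamma_t(\Omega)-1$ that totally dominates $V(\Omega)\setminus\{v\}$ can \emph{never} dominate $v$, for otherwise $S$ would be a total dominating set of $\Omega$ of size $\gamma_t(\Omega)-1$, a contradiction. This kills one of your two ``root flags'' outright in every economical copy: the root of such a copy is always dominated externally, so with $W=\{i: r_i\in D\}$ the trichotomy of correction terms becomes transparent. If economical sets may contain $v$, take $W=V(\Gamma)$ at no extra cost and isolate-freeness of $\Gamma$ finishes, giving $n\gamma_t(\Omega)-n$; if they may not, the copies indexed by $W$ cost a full $\gamma_t(\Omega)$, and $W$ must dominate $\Gamma$ --- merely dominate when $\Omega$ has a $\gamma_t(\Omega)$-set containing $v$ (so the roots of the rescuing copies are dominated internally, yielding the $\gamma(\Gamma)$ term), and totally dominate otherwise (yielding the $\gamma_t(\Gamma)$ term). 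The matching lower bound is then the deficit-charging argument you sketch: by your local lemma each copy's deficit $\gamma_t(\Omega)-|S_i|$ is at most $1$, every deficient copy forces its root to be adjacent to a root of $W$, and minimality of $|W|$ relative to the applicable domination requirement on $\Gamma$ rules out any fifth value and shows mixing copy types cannot beat the uniform strategies. With that observation added, your plan is a faithful reconstruction of the argument in \cite{CaRo20}; without it, the separation of the $\gamma(\Gamma)$ and $\gamma_t(\Gamma)$ cases that you correctly identify as delicate remains only asserted, not proved.
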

\begin{Theorem} {\rm \cite{KuLeYe16}} \label{4}
	Let $\Gamma$ be an isolate-free graph and $n \geq 2$. Then for any graph $\Omega$ with root $v$ and $|V(\Omega)|\geq 2$, $\gamma_c(\Gamma \circ_v \Omega) \in \{n\gamma_c(\Omega), n\gamma_c(\Omega)+n \}$.
\end{Theorem}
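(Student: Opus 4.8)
The plan is to show that any connected dominating set of $\Gamma \circ_v \Omega$ is forced, copy by copy, to behave like a connected dominating set of $\Omega$ that contains the root, and then to compare that constrained parameter with $\gamma_c(\Omega)$. Write $v_1,\dots,v_n$ for the vertices of $\Gamma$, let $\Omega_1,\dots,\Omega_n$ be the corresponding copies of $\Omega$ glued at their roots $v_1,\dots,v_n$, and for a connected dominating set $D$ put $D_i = D \cap V(\Omega_i)$. Throughout I assume $\Gamma$, and hence $\Gamma \circ_v \Omega$, is connected, which is what makes $\gamma_c$ well defined. The first and easy observation is that since $|V(\Omega)|\ge 2$ each copy contains a non-root vertex, and such a vertex has all of its neighbours inside its own copy; hence it can only be dominated from within, forcing $D_i \neq \emptyset$ for every $i$. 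Thus $D$ meets all $n$ copies.

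Next I would establish the structural heart of the argument: every root lies in $D$, and each trace $D_i$ is connected. The point is that two distinct copies communicate only through their roots, so any path inside $D$ that leaves $\Omega_i$ must pass through $v_i$. Since $n \ge 2$ and $D$ meets every copy, connectedness of $D$ gives a path from $D_i$ to some $D_j$ with $j \neq i$ inside $D$; this path leaves $\Omega_i$ and therefore contains $v_i$, so $v_i \in D$ for all $i$. A similar ``the only exit is $v_i$'' argument, now applied to a hypothetical second component of $\Omega_i[D_i]$, shows that $D_i$ is already connected inside $\Omega_i$: a path in $D$ joining two components would have to leave and re-enter $\Omega_i$ through $v_i$, which a simple path cannot do twice. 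Because $v_i \in D_i$ and $D_i$ dominates all non-root vertices of $\Omega_i$, the set $D_i$ is a connected dominating set of $\Omega_i \cong \Omega$ containing the root; consequently $|D_i| \ge \gamma_c^{v}(\Omega)$, where $\gamma_c^{v}(\Omega)$ denotes the least size of a connected dominating set of $\Omega$ that contains $v$. Summing over the disjoint copies yields $\gamma_c(\Gamma \circ_v \Omega) \ge n\,\gamma_c^{v}(\Omega)$. The matching upper bound is a construction: place an optimal such set in each copy; all roots then lie in $D$, each $D_i$ is connected to its root, and since $\Gamma$ is connected the roots are tied together through the $\Gamma$-edges, so the union is a connected dominating set of size $n\,\gamma_c^{v}(\Omega)$. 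Hence $\gamma_c(\Gamma \circ_v \Omega) = n\,\gamma_c^{v}(\Omega)$.

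It remains to pin down $\gamma_c^{v}(\Omega)$, and here the dichotomy appears. Clearly $\gamma_c^{v}(\Omega) \ge \gamma_c(\Omega)$. For the reverse, take a minimum connected dominating set $C$ of $\Omega$. If $v \in C$ then $\gamma_c^{v}(\Omega) = \gamma_c(\Omega)$; otherwise $C$ dominates $v$, so $v$ has a neighbour in $C$, whence $C \cup \{v\}$ is a connected dominating set containing $v$ and $\gamma_c^{v}(\Omega) \le \gamma_c(\Omega)+1$. Therefore $\gamma_c^{v}(\Omega) \in \{\gamma_c(\Omega),\gamma_c(\Omega)+1\}$, and multiplying by $n$ gives exactly the two stated values $n\gamma_c(\Omega)$ and $n\gamma_c(\Omega)+n$.

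I expect the connectivity bookkeeping in the middle step, namely proving that every root must belong to $D$ and that each trace $D_i$ is connected, to be the main obstacle, since it is what rules out cheaper, disconnected-looking dominating configurations and is precisely where the hypothesis $n \ge 2$ is used. The small cases $\Omega = K_2$, which gives $n\gamma_c(\Omega)$, and $\Omega = P_3$ rooted at an endpoint, which gives $n\gamma_c(\Omega)+n$, show that both values are attained, so no sharper single-valued statement is possible.
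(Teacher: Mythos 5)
Your proof is correct, but note that the paper itself contains no proof of this statement to compare against: Theorem~\ref{4} is quoted verbatim from Kuziak, Lema\'{n}ska and Yero \cite{KuLeYe16} and used as a black box, so the only meaningful comparison is with that source. Your argument is a complete, self-contained derivation, and it actually establishes something sharper than the quoted dichotomy: the exact identity $\gamma_c(\Gamma \circ_v \Omega) = n\,\gamma_c^{v}(\Omega)$, where $\gamma_c^{v}(\Omega)$ is the minimum size of a connected dominating set of $\Omega$ containing the root, combined with $\gamma_c^{v}(\Omega) \in \{\gamma_c(\Omega), \gamma_c(\Omega)+1\}$. This pinpoints exactly which of the two values occurs (according to whether some $\gamma_c(\Omega)$-set contains $v$), which is also how \cite{KuLeYe16} organizes the result. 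All the structural steps are sound: non-root vertices have no neighbours outside their copy, so every trace $D_i$ is nonempty; a simple path in $G[D]$ leaving a copy must use the cut vertex $v_i$, forcing every root into $D$ (this is where $n \ge 2$ is genuinely used); a simple path cannot pass through $v_i$ twice, so each $D_i$ induces a connected subgraph of $\Omega_i$; and the upper-bound construction gluing optimal rooted sets along the root-copy of $\Gamma$ is valid since the roots induce a copy of the connected graph $\Gamma$. One point worth flagging explicitly: you silently, and rightly, strengthened the paper's hypothesis from ``isolate-free'' to ``connected''; as printed the hypothesis is too weak, since $\gamma_c$ of the rooted product is undefined when $\Gamma$ or $\Omega$ is disconnected, and the theorem in \cite{KuLeYe16} is indeed stated for connected graphs. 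Your closing examples, $\Omega = K_2$ and $\Omega = P_3$ rooted at a leaf, correctly witness that both values in the displayed set are attained, so the statement cannot be sharpened to a single value without further hypotheses.
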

\begin{Theorem} {\rm \cite{CaEs23}} \label{5}
	Let $\Gamma$ and $\Omega$ be two connected graphs. If $v \in V(\Omega)$, then $\gamma_{\times 2}(\Gamma \circ_v \Omega) \in \{n\gamma_{\times 2}(\Omega), \gamma_{q \times 2}(\Gamma) + n\gamma_{\times 2}(\Omega)-n, \gamma_{2}(\Gamma) + n\gamma_{\times 2}(\Omega)-n, \gamma(\Gamma) + n\gamma_{\times 2}(\Omega)-n, n\gamma_{\times 2}(\Omega)-n, \gamma_{\times 2}(\Gamma) + n\gamma_{\times 2}(\Omega)-2n \}$.
\end{Theorem}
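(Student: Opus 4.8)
The plan is to exploit the layered structure of the rooted product. Write $V(\Gamma) = \{u_1, \dots, u_n\}$, let $\Omega_1, \dots, \Omega_n$ be the attached copies with roots $v_1, \dots, v_n$ (so that $v_i$ is identified with $u_i$), and recall that the only edges joining distinct copies are those between roots, inherited from $\Gamma$. For any double dominating set $D$ of $\Gamma \circ_v \Omega$ put $D_i = D \cap V(\Omega_i)$. The first observation is purely local: every non-root vertex $w$ of $\Omega_i$ has $N[w] \subseteq V(\Omega_i)$, so the requirement $|N[w] \cap D| \ge 2$ forces $D_i$ to double dominate $V(\Omega_i)\setminus\{v_i\}$ inside $\Omega_i$. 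Thus each copy is essentially autonomous, and the only coupling between copies is how the roots help to double dominate one another.

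First I would introduce auxiliary parameters of the rooted graph $(\Omega,v)$ recording the minimum internal cost as a function of the ``root contract'': for $\ell \in \{0,1,2\}$ let $c_\ell$ be the least size of a set $S \subseteq V(\Omega)$ that double dominates $V(\Omega)\setminus\{v\}$ and satisfies $|N_\Omega[v]\cap S|\ge \ell$, further distinguishing whether $v \in S$ (since a root lying in $D$ is exactly what can help its neighbours). The key inequalities are $c_\ell \ge \gamma_{\times 2}(\Omega) - (2-\ell)$: any such $S$ can be completed to a genuine double dominating set of $\Omega$ by adjoining at most $2 - \ell$ vertices of $N_\Omega[v]$. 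This yields the per-copy bound $|D_i| \ge \gamma_{\times 2}(\Omega) - (2 - \ell_i)$, where $\ell_i = |N_{\Omega_i}[v_i]\cap D_i|$.

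Next I would globalise. Setting $R = \{i : v_i \in D\} \subseteq V(\Gamma)$ and $h_i = |N_\Gamma(u_i)\cap R|$ (the external help received by root $v_i$), the double domination constraint at each root reads $\ell_i + h_i \ge 2$, while $i \in R$ forces $\ell_i \ge 1$. Summing the per-copy bound gives
\[
|D| \;\ge\; n\gamma_{\times 2}(\Omega) - 2n + \sum_{i=1}^{n}\ell_i ,
\]
so the problem reduces to minimising $\sum_i \ell_i$ over all admissible choices of $R$. A direct analysis of this discrete optimisation shows that an optimal $R$ is forced to be one of a short list of domination-type sets of $\Gamma$: according to how many roots must be self-sufficient ($\ell_i = 2$), singly helped ($\ell_i = 1$) or doubly helped ($\ell_i = 0$), the optimal $R$ is a dominating set, a $2$-dominating set, a double dominating set, or the more delicate mixed object recorded by a quasi-double dominating pair $(U,V)$. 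Substituting the corresponding minima $\gamma(\Gamma)$, $\gamma_2(\Gamma)$, $\gamma_{\times 2}(\Gamma)$, $\gamma_{q\times 2}(\Gamma)$ produces exactly the listed expressions, together with the value $n\gamma_{\times 2}(\Omega)$ obtained by making every copy fully self-sufficient and the value $n\gamma_{\times 2}(\Omega)-n$ when every root can be singly helped at no extra $\Gamma$-cost.

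Finally I would prove matching upper bounds by assembling, from an optimal set of each type in $\Gamma$ together with suitable internal sets in the copies, a double dominating set of $\Gamma \circ_v \Omega$ of the stated size, and then conclude that $\gamma_{\times 2}(\Gamma \circ_v \Omega)$ equals the minimum of the achievable candidates. The main obstacle is the quasi-double dominating pair case: showing that the optimum of the root optimisation is captured precisely by $\gamma_{q\times 2}(\Gamma)$, rather than by some intermediate quantity, requires careful bookkeeping of roots that lie in $R$ but have no neighbour in $R$, since such roots must be double dominated internally and so cannot enjoy the $\ell = 1$ saving. Verifying that the per-copy estimates $c_\ell = \gamma_{\times 2}(\Omega) - (2-\ell)$ are genuinely attained, so that the bounds are tight, is the other delicate point, and is where the connectedness of the two graphs enters.
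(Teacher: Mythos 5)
The paper itself contains no proof of this statement: Theorem~\ref{5} is quoted verbatim from \cite{CaEs23}, so your attempt can only be compared with that source, whose strategy your sketch does broadly mirror (restricting $D$ to the copies $\Omega_i$, per-copy lower bounds, and bookkeeping of which roots lie in $D$ and which receive external help).

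There are, however, two genuine gaps. The central one is that the step ``a direct analysis of this discrete optimisation shows that an optimal $R$ is forced to be one of a short list of domination-type sets'' \emph{is} the theorem: you assert the reduction to $\gamma(\Gamma)$, $\gamma_2(\Gamma)$, $\gamma_{\times 2}(\Gamma)$, $\gamma_{q\times 2}(\Gamma)$ rather than carry it out, and the hardest part --- the emergence of the quasi-double domination pair $(U,V)$, which must track roots in $R$ with no neighbour in $R$, together with the asymmetry you flag but then drop (that $\ell_i=1$ realised by $v_i\in D_i$ lets $u_i$ assist its $\Gamma$-neighbours, while $\ell_i=1$ realised by an internal neighbour of $v_i$ does not) --- is exactly the case analysis that fills the source paper. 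Second, your closing claim that the estimates $c_\ell=\gamma_{\times 2}(\Omega)-(2-\ell)$ are ``genuinely attained,'' with connectedness as the rescue, is false in general: take $\Omega=C_4$ with any root $v$. Then $\gamma_{\times 2}(C_4)=3$, yet no two-element subset of $V(C_4)$ double dominates $V(C_4)\setminus\{v\}$, so $c_0=c_1=3=\gamma_{\times 2}(C_4)$ and no per-copy saving exists. This failure is not a removable delicacy; it is precisely why the conclusion is membership in a six-element list rather than the single formula $\min$ of those expressions, and why the paper's Theorem~\ref{6} pins $\gamma_{\times 2}(\Gamma\circ_v C_4)=n\gamma_{\times 2}(C_4)$. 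A correct argument must instead branch on \emph{which} savings $2-\ell$ (and with which root-membership variant) are attainable in $(\Omega,v)$ and show each branch forces the optimum onto one listed value; only the inequalities $c_\ell\geq\gamma_{\times 2}(\Omega)-(2-\ell)$ survive as stated, and even these require capping $\ell_i$ at $2$ in your summed bound.
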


\begin{Theorem} {\rm \cite{CaEs23}} \label{6}
	Let $\Gamma$ be an isolate-free graph, then $\gamma_{\times 2}(\Gamma \circ_v C_4) = n\gamma_{\times 2}(C_4)$.
\end{Theorem}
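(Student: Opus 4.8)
The plan is to prove the identity by establishing matching lower and upper bounds directly from the structure of the rooted product, rather than by merely minimizing over the six candidate values supplied by Theorem~\ref{5}. First I would fix notation: realise $\Gamma \circ_v C_4$ as $n$ pairwise vertex-disjoint copies $G_1,\dots,G_n$ of $C_4$, where the root of $G_i$ is identified with the $i$-th vertex of $\Gamma$, label each copy as the $4$-cycle $r_i a_i b_i c_i$ with $r_i$ the root, and record the key structural fact that the only edges leaving a copy emanate from its root and run to the roots of adjacent copies. A preliminary computation fixes $\gamma_{\times 2}(C_4)=3$: no two vertices can double dominate $C_4$, since each vertex would need both chosen vertices in its closed neighbourhood, which is impossible in a $4$-cycle, whereas any three vertices do double dominate it.

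For the upper bound I would exhibit the set $D=\bigcup_{i=1}^{n}\{r_i,a_i,c_i\}$, of cardinality $3n$, and verify it is a double dominating set. Every non-root vertex has all its neighbours inside its own copy, and a three-element subset of $C_4$ double dominates that copy; each root $r_i$ is already double dominated within $G_i$, and the cross edges coming from $\Gamma$ can only enlarge $|N[r_i]\cap D|$. This gives $\gamma_{\times 2}(\Gamma \circ_v C_4)\le 3n$.

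The crux is the lower bound, and this is exactly where the smaller value $n\gamma_{\times 2}(C_4)-n=2n$ permitted by Theorem~\ref{5} must be excluded. Let $D$ be an arbitrary double dominating set and put $D_i=D\cap V(G_i)$. Because the three non-root vertices $a_i,b_i,c_i$ have no neighbours outside $G_i$, their double-domination conditions $|N[a_i]\cap D_i|\ge 2$, $|N[b_i]\cap D_i|\ge 2$, $|N[c_i]\cap D_i|\ge 2$ must be satisfied using $D_i$ alone, so no external help from the root is available. Summing these three inequalities and observing that $r_i,a_i,c_i$ each lie in exactly two of the closed neighbourhoods while $b_i$ lies in all three, the total on the left is at most $2|D_i|+1$ while the total on the right is $6$; hence $|D_i|\ge 3$. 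As the copies are vertex disjoint, $|D|=\sum_{i}|D_i|\ge 3n$.

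Combining the two bounds yields $\gamma_{\times 2}(\Gamma \circ_v C_4)=3n=n\gamma_{\times 2}(C_4)$. I expect the main obstacle to be precisely the lower-bound step: the conceptual point is that whatever external adjacencies a root receives through $\Gamma$ are irrelevant, because the obstruction is generated entirely by the interior vertices of each copy. Making this observation explicit is what rules out the candidate $2n$ and certifies that the first of the six values in Theorem~\ref{5} is the correct one.
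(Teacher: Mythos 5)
Your proof is correct, but there is nothing in the paper to compare it against: Theorem~\ref{6} is stated with a citation to \cite{CaEs23} and no proof is given in this paper, so your argument fills a gap rather than paralleling an internal one. Checking your proposal in detail: the computation $\gamma_{\times 2}(C_4)=3$ is right (with $|D|=2$ every vertex would need $V(C_4)\subseteq N[x]\cap N[y]$, impossible since $|N[x]|=3$); the set $\bigcup_i\{r_i,a_i,c_i\}$ is indeed a double dominating set, since each root is already double dominated inside its own copy and the interior vertices are handled copywise; and the lower bound correctly isolates the key structural point that $N[a_i]$, $N[b_i]$, $N[c_i]$ are entirely contained in $V(G_i)$, so the interior vertices' demands must be met by $D_i$ alone. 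Your counting bound $2|D_i|+1\geq 6$, hence $|D_i|\geq 3$, is valid; an even quicker variant is to observe that $|D_i|\le 2$ would force $D_i\subseteq N[a_i]\cap N[b_i]\cap N[c_i]=\{b_i\}$, a contradiction. Compared with the route taken in the cited source, which obtains such equalities from a general classification of $\gamma_{\times 2}(\Gamma\circ_v\Omega)$ for arbitrary $\Omega$ (of which Theorem~\ref{5} is the trace in this paper), your argument is more elementary and self-contained, at the cost of being specific to $\Omega=C_4$; it also proves slightly more than the statement claims, since you never use the hypothesis that $\Gamma$ is isolate-free, so the identity $\gamma_{\times 2}(\Gamma\circ_v C_4)=3n$ holds for every graph $\Gamma$ of order $n$.
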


\begin{Theorem} {\rm \cite{CaRuSe24}} \label{7}
	Let $\Gamma$ and $\Omega$ be any graph. For any vertex $v \in V(\Omega)$, $\gamma_2(\Gamma \circ_v \Omega) \in \{ \gamma(\Gamma)+ n\gamma_2(\Omega)-n, \gamma_2(\Gamma)+n\gamma_2(\Omega)-n, n\gamma_2(\Omega) \}$.
\end{Theorem}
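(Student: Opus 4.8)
The plan is to exploit the fact that $V(\Gamma \circ_v \Omega)$ partitions into the $n=|V(\Gamma)|$ copies $\Omega_1,\dots,\Omega_n$ of $\Omega$, whose roots are identified with the vertices $u_1,\dots,u_n$ of $\Gamma$. For a $2$-dominating set $D$ I would write $D_i = D \cap V(\Omega_i)$, so that $|D| = \sum_{i=1}^{n} |D_i|$. The governing observation is that every non-root vertex of $\Omega_i$ has all of its neighbours inside $\Omega_i$, so $D_i$ must $2$-dominate $V(\Omega_i)\setminus\{u_i\}$ entirely on its own; only the root $u_i$ can draw on ``external'' help along the $\Gamma$-edges joining the roots. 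All three candidate values should then emerge from how the roots are handled.

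First I would isolate the local parameter $\beta$, the minimum size of a set in $\Omega$ that $2$-dominates $V(\Omega)\setminus\{v\}$, and prove the dichotomy $\beta \in \{\gamma_2(\Omega)-1,\gamma_2(\Omega)\}$. The inequality $\beta\le\gamma_2(\Omega)$ is immediate, and $\beta \ge \gamma_2(\Omega)-1$ follows because adjoining $v$ to any such set produces a genuine $2$-dominating set of $\Omega$. The same adjunction is the crucial tightness tool: when $\beta = \gamma_2(\Omega)-1$, taking an economical set $S$ of size $\gamma_2(\Omega)-1$ (which cannot contain $v$) shows that $S\cup\{v\}$ is a $2$-dominating set of $\Omega$ containing $v$ of size exactly $\gamma_2(\Omega)$, so a helper copy whose root is placed in $D$ can always be realised at cost $\gamma_2(\Omega)$.

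The case $\beta = \gamma_2(\Omega)$ yields $n\gamma_2(\Omega)$: each $|D_i|\ge\beta=\gamma_2(\Omega)$ forces $|D|\ge n\gamma_2(\Omega)$, and placing a minimum $2$-dominating set of $\Omega$ in every copy (which settles each root internally) attains it. For $\beta = \gamma_2(\Omega)-1$ I would run a charging argument: set $e_i = |D_i| - (\gamma_2(\Omega)-1)\ge 0$ and let $Z = \{i : e_i \ge 1\}$; note that a root lying in $D$ forces its copy to be a $2$-dominating set of $\Omega$ of size $\ge\gamma_2(\Omega)$, hence $\{i:u_i\in D\}\subseteq Z$. A copy with $e_i=0$ must use an economical set omitting $v$ that gives $v$ at most one internal neighbour, so its root needs external help drawn from $Z$; thus $Z$ dominates $\Gamma$, giving $\sum_i e_i \ge |Z| \ge \gamma(\Gamma)$ and $|D|\ge \gamma(\Gamma)+n\gamma_2(\Omega)-n$. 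The split between the last two values rests on whether some economical set supplies $v$ an internal neighbour: if it does, a dominating set of $\Gamma$ combined with these economical sets (and helper copies of cost $\gamma_2(\Omega)$) realises $\gamma(\Gamma)+n\gamma_2(\Omega)-n$; if every economical set leaves $v$ with no internal neighbour, then each cheap root demands two external neighbours, $Z$ actually $2$-dominates $\Gamma$, and the bound sharpens to $\gamma_2(\Gamma)+n\gamma_2(\Omega)-n$, matched by a $2$-dominating set of $\Gamma$ together with the economical sets.

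I expect the main obstacle to be the bookkeeping in the lower bound of this final case: one must argue that forcing an internal neighbour onto $v$ while still $2$-dominating the non-root vertices genuinely costs a full extra unit, so that cheap roots really do require two external neighbours and $Z$ satisfies the stronger $2$-domination condition on $\Gamma$ rather than mere domination. Pinning down this internal-help threshold precisely, and invoking the adjunction fact of the second paragraph to guarantee that every helper copy is realisable at cost exactly $\gamma_2(\Omega)$, is what makes the upper and lower bounds coincide at one of the three listed values.
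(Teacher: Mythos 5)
You should know at the outset that the paper contains no proof of this statement to compare against: it is imported verbatim from \cite{CaRuSe24} as a known tool, so your argument must be judged on its own merits and against that source. On that footing, your proof is correct and essentially reconstructs the argument of \cite{CaRuSe24}. The decomposition $|D|=\sum_{i}|D_i|$, together with the observation that each $D_i$ must $2$-dominate $V(\Omega_i)\setminus\{u_i\}$ unaided, is sound; the dichotomy $\beta\in\{\gamma_2(\Omega)-1,\gamma_2(\Omega)\}$ follows correctly from the adjunction $S\cup\{v\}$; and all three cases close. The two facts carrying your lower bounds are genuinely available: a set of size $\gamma_2(\Omega)-1$ that $2$-dominates $V(\Omega)\setminus\{v\}$ can neither contain $v$ nor give $v$ two neighbours, since either would make it a $2$-dominating set of $\Omega$ of deficient size, and this is exactly what forces $Z=\{i : e_i\ge 1\}$ to dominate $\Gamma$ in general and to $2$-dominate $\Gamma$ in your subcase where every economical set starves $v$. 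The obstacle you flag in your last paragraph is in fact already discharged by your own subcase hypothesis: in that subcase \emph{every} $\beta$-set leaves $v$ with no internal neighbour, so a copy with $e_i=0$ automatically has a root needing two external neighbours, and no additional ``internal-help threshold'' argument is needed. Two further observations: your parameter satisfies $\beta=\gamma_2(\Omega)-1$ if and only if $\gamma_2(\Omega-v)\le\gamma_2(\Omega)-1$ (a deficient $\beta$-set avoids $v$ and is therefore a $2$-dominating set of $\Omega-v$, and conversely), so your trichotomy is equivalent to the criterion $\gamma_2(\Omega-v)\ge\gamma_2(\Omega)$ that the paper also quotes from \cite{CaRuSe24} as the characterisation of the value $n\gamma_2(\Omega)$; and you actually prove more than the stated membership, namely closed formulas identifying exactly which of the three values occurs in each case, which is the form the result takes in \cite{CaRuSe24}.
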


\begin{Theorem} {\rm \cite{CaRuSe24}} \label{8}
	Let $\Gamma$ be an isolate-free graph such that $\gamma_2(\Gamma) < n$ and let $\Omega$ be a graph and $v \in V(\Omega)$. Then {\rm(a)} and {\rm(b)} are equivalent:
	\item \rm{(a)} $\gamma_2(\Gamma \circ_v \Omega) = n\gamma_2(\Omega)$.
	\item \rm{(b)} $\gamma_2(\Omega - v) \geq \gamma_2(\Omega)$.
\end{Theorem}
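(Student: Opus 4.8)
The plan is to work directly with the copy decomposition of the rooted product rather than to invoke Theorem~\ref{7} wholesale, although that theorem already tells us that $n\gamma_2(\Omega)$ is the largest of the three candidate values once $\gamma(\Gamma)\le\gamma_2(\Gamma)<n$. First I would record the universal upper bound $\gamma_2(\Gamma\circ_v\Omega)\le n\gamma_2(\Omega)$, obtained by placing a minimum $2$-dominating set of $\Omega$ inside each of the $n$ copies: every non-root vertex is then $2$-dominated within its own copy, and each root is either in the chosen set or has two neighbours in it, so no cross-copy help is needed. Both implications then reduce to deciding whether this bound can be beaten.

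The technical core is a per-copy lower bound. Writing $D_i=D\cap V(\Omega_i)$ for an arbitrary $2$-dominating set $D$ of $\Gamma\circ_v\Omega$, I would use the fact that every non-root vertex of a copy has all its neighbours inside that copy, so $D_i$ alone must supply the two dominators of each non-root vertex lying outside $D$. If the root $u_i$ lies in $D$, this makes $D_i$ a $2$-dominating set of $\Omega$, whence $|D_i|\ge\gamma_2(\Omega)$; if $u_i\notin D$, the two internal dominators of each non-root vertex avoid $u_i$, so $D_i$ is a $2$-dominating set of $\Omega-v$ and $|D_i|\ge\gamma_2(\Omega-v)$.

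For (b)$\Rightarrow$(a), let $s$ be the number of copies whose root lies in $D$; summing the per-copy bounds gives $|D|\ge s\,\gamma_2(\Omega)+(n-s)\,\gamma_2(\Omega-v)$, which under the hypothesis $\gamma_2(\Omega-v)\ge\gamma_2(\Omega)$ is at least $n\gamma_2(\Omega)$. Since $D$ was arbitrary this matches the upper bound, forcing equality. For the contrapositive of (a)$\Rightarrow$(b), assume $\gamma_2(\Omega-v)\le\gamma_2(\Omega)-1$ and fix a minimum $2$-dominating set $D'$ of $\Omega-v$ together with a minimum $2$-dominating set $S$ of $\Gamma$, where $|S|=\gamma_2(\Gamma)<n$. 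I would build $D$ by placing $\{v\}\cup D'$ (a $2$-dominating set of $\Omega$ containing the root) in each copy indexed by $S$ and $D'$ in every remaining copy; the external roots are $2$-dominated because each lies outside $S$ and hence has two $\Gamma$-neighbours in $S$, all of which are roots in $D$. This $D$ has size $n\gamma_2(\Omega-v)+\gamma_2(\Gamma)<n\gamma_2(\Omega)$, the strict inequality following from $\gamma_2(\Gamma)<n\le n(\gamma_2(\Omega)-\gamma_2(\Omega-v))$, so (a) fails.

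I expect the main obstacle to be the bookkeeping in the per-copy lower bound, specifically the case $u_i\notin D$, where one must argue that the two dominators furnished to each non-root vertex genuinely lie in $\Omega-v$ and not at the deleted root. The other delicate point is the reverse construction: it is essential that $S$ be a $2$-dominating (not merely dominating) set of $\Gamma$, so that every external root receives two independent neighbours in $D$, and this is precisely where the hypothesis $\gamma_2(\Gamma)<n$ is consumed to guarantee a strict saving over $n\gamma_2(\Omega)$.
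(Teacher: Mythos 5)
Your proof is correct in every step: the per-copy decomposition $D_i=D\cap V(\Omega_i)$, the two cases $u_i\in D$ (giving $|D_i|\ge\gamma_2(\Omega)$) and $u_i\notin D$ (giving $|D_i|\ge\gamma_2(\Omega-v)$, valid because a non-root vertex has all its neighbours inside its own copy and the excluded root is not in $D$), the summation for (b)$\Rightarrow$(a), and the contrapositive construction $n\gamma_2(\Omega-v)+\gamma_2(\Gamma)<n\gamma_2(\Omega)$, which correctly locates where the hypothesis $\gamma_2(\Gamma)<n$ is used. Note, however, that this paper contains no proof to compare against: Theorem~\ref{8} is quoted verbatim from \cite{CaRuSe24} as a known tool, so what you have produced is a self-contained reconstruction of the external source's argument (the copy-decomposition technique standard in the rooted-product domination literature), not an alternative to anything proved here.
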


\begin{Theorem} \label{18}
	For an isolate-free graph $\Gamma$, $\max\{ \gamma_{i}(\Gamma), \dim(\Gamma) \} \leq \gamma_{ri}(G) \leq \gamma_i(\Gamma)+ \dim(\Gamma)$.
\end{Theorem}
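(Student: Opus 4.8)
The plan is to follow the template of the classical resolving domination bound in Theorem~\ref{1} and to adapt it to the independent setting, paying attention to the one feature of independence that breaks the argument.

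For the lower bound the definition does all the work. If $S$ is any resolving independent dominating set of $\Gamma$, then $S$ is in particular a resolving set, whence $|S|\ge\dim(\Gamma)$, and $S$ is also an independent dominating set, whence $|S|\ge\gamma_i(\Gamma)$. Taking the minimum over all such $S$ yields $\gamma_{ri}(\Gamma)\ge\max\{\gamma_i(\Gamma),\dim(\Gamma)\}$, and isolate-freeness is used only to guarantee that the parameters on the right are well defined.

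For the upper bound I would start from a minimum independent dominating set $D$, with $|D|=\gamma_i(\Gamma)$, and a basis $W$, with $|W|=\dim(\Gamma)$, and consider their union $S=D\cup W$. Because a superset of a resolving set is resolving and a superset of a dominating set is dominating, $S$ is at once resolving and dominating, and $|S|\le|D|+|W|=\gamma_i(\Gamma)+\dim(\Gamma)$. If one could also certify that $S$ is independent, then $S$ would be a resolving independent dominating set and the inequality would follow verbatim as in Theorem~\ref{1}.

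The independence of $S$ is exactly where I expect the difficulty to lie. For total domination the same union is immediately a resolving total dominating set, since enlarging a total dominating set preserves the total domination property; but enlarging an independent set by the basis vertices in $W$ typically introduces edges, both among the vertices of $W$ and between $W$ and $D$, so $S$ need not be independent. The repair I would attempt is to feed the vertices of $W\setminus D$ into $D$ one by one, keeping a vertex only when it is nonadjacent to the set built so far, and then to argue that the vertices discarded in this process are never the unique resolver of a pair; equivalently, to select the basis $W$ inside $V(\Gamma)\setminus N[D]$ as an independent set whenever the structure of $\Gamma$ permits. Finally, one should record that, in contrast to ordinary resolving domination, a resolving independent dominating set may fail to exist at all---no independent set resolves a complete graph, for instance---so the statement is to be read for graphs that admit such a set; for those graphs, reconciling the resolving requirement with the independence constraint is the crux of the proof.
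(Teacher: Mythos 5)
Your lower bound is complete and is exactly the right argument: a resolving independent dominating set is simultaneously a resolving set and an independent dominating set, so its cardinality is at least $\dim(\Gamma)$ and at least $\gamma_i(\Gamma)$. For the upper bound, however, you have correctly diagnosed the obstruction without overcoming it, and the gap is genuine: neither of your proposed repairs works in general. Greedily discarding the vertices of $W\setminus D$ that are adjacent to the set built so far can delete the unique resolver of some pair, and nothing in your sketch rules this out; likewise there is in general no independent basis contained in $V(\Gamma)\setminus N[D]$. Indeed, your own closing observation kills the statement as written: in $K_n$ with $n\geq 3$ every independent set has size one and no single vertex resolves the graph, so $K_3$ is an isolate-free graph admitting no resolving independent dominating set at all, while $\gamma_i(K_3)+\dim(K_3)=3$. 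Hence the theorem is false without the extra hypothesis that $\Gamma$ admits such a set, and even under that hypothesis the union-of-certificates argument cannot be salvaged by local pruning; a correct proof would have to construct a single independent set that simultaneously dominates and resolves within the claimed size, which is a different problem from concatenating a minimum independent dominating set with a basis.

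You should also know that the paper fares no better at this exact point. Its entire proof reads that the claim is obvious from $\gamma(\Gamma)\leq\gamma_i(\Gamma)$ together with Theorem~\ref{1}. That deduction yields only $\gamma_r(\Gamma)\leq\gamma(\Gamma)+\dim(\Gamma)\leq\gamma_i(\Gamma)+\dim(\Gamma)$, i.e.\ a bound on $\gamma_r$, not on $\gamma_{ri}$; since every resolving independent dominating set is a resolving dominating set, one has $\gamma_{ri}(\Gamma)\geq\gamma_r(\Gamma)$, so the monotonicity runs in the wrong direction, and the passage from a resolving dominating set to an independent one is precisely the step you isolated as the crux. So your proposal, though incomplete, makes explicit the failure point that the paper's one-line proof silently skips; what is missing from both your attempt and the paper is either an argument producing an independent set that both resolves and dominates within the claimed size, or an added hypothesis (existence of a resolving independent dominating set, at minimum) delimiting when the bound can hold.
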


\begin{proof}
	It is obvious from the inequality that $\gamma(\Gamma) \leq \gamma_i(\Gamma)$ and Theorem \ref{1}.
\end{proof}

\begin{Theorem} \label{14}
	If $\gamma_t(\Gamma) \geq \dim(\Gamma)$, then $\gamma_t(\Gamma) \leq \gamma_{rt}(\Gamma) \leq \gamma_t(\Gamma)+ \dim(\Gamma)$.
\end{Theorem}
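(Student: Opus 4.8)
The plan is to mirror the argument behind Theorem~\ref{1}, now carried out in the total-domination setting, by splitting the claim into its lower and upper bounds and exploiting the fact that both ``being a total dominating set'' and ``being a resolving set'' are closed under passage to supersets.

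For the lower bound, I would first observe that any resolving total dominating set is, by definition, simultaneously a total dominating set and a resolving set. Hence its cardinality is at least $\gamma_t(\Gamma)$ and at least $\dim(\Gamma)$, which gives $\gamma_{rt}(\Gamma) \ge \max\{\gamma_t(\Gamma),\dim(\Gamma)\}$. Invoking the hypothesis $\gamma_t(\Gamma)\ge \dim(\Gamma)$ collapses this maximum to $\gamma_t(\Gamma)$, yielding $\gamma_t(\Gamma)\le \gamma_{rt}(\Gamma)$.

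For the upper bound, I would exhibit an explicit resolving total dominating set of the required size. Let $D_t$ be a minimum total dominating set, so $|D_t|=\gamma_t(\Gamma)$, and let $B$ be a basis, so $|B|=\dim(\Gamma)$. Consider $S=D_t\cup B$. Since $D_t\subseteq S$ and $D_t$ already totally dominates $\Gamma$, every vertex of $\Gamma$ retains a neighbor inside $S$, so $S$ is a total dominating set. Likewise, since $B\subseteq S$ and $B$ resolves every pair of vertices, the larger set $S$ resolves them as well, so $S$ is a resolving set. Therefore $S$ is a resolving total dominating set, and $\gamma_{rt}(\Gamma)\le |S|\le |D_t|+|B|=\gamma_t(\Gamma)+\dim(\Gamma)$.

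I do not expect a genuine obstacle here: the whole argument rests on the monotonicity of total domination and of resolvability under supersets, which is precisely what makes the union $D_t\cup B$ behave well (in contrast to the independent case, where adjoining a basis could destroy independence and thereby complicate the analogous upper bound). The only point requiring a moment's care is that the hypothesis $\gamma_t(\Gamma)\ge \dim(\Gamma)$ is used solely to rewrite the natural lower bound $\max\{\gamma_t(\Gamma),\dim(\Gamma)\}$ in the stated form, whereas the upper bound holds independently of it.
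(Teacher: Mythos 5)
Your proof is correct and takes essentially the same route as the paper's: the lower bound follows because a resolving total dominating set is simultaneously a total dominating set and a resolving set, and the upper bound follows by combining a minimum total dominating set with a metric basis. Your explicit verification that $S = D_t \cup B$ remains both total dominating and resolving (by monotonicity under supersets) merely supplies the detail that the paper's proof dismisses as ``obvious'', the paper adding only that the upper bound is attained by $P_2 \circ_v K_3$.
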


\begin{proof}
	Since the resolving total dominating set must resolve and total dominate the vertices of the graph it is obvious when $\gamma_t(\Gamma) \geq \dim(\Gamma)$, so $\gamma_t(\Gamma) \leq \gamma_{rt}(\Gamma) \leq \gamma_t(\Gamma)+ \dim(\Gamma)$. The upper bound could be exhibited by the graph which exists as $P_2 \circ_v K_3$. Consult Figure \ref{ex}.
\end{proof}

\begin{Theorem} \label{15}
	If $\gamma_t(\Gamma) \leq \dim(\Gamma)$, then $\dim(\Gamma) \leq \gamma_{rt}(\Gamma) \leq \gamma_t(\Gamma)+ \dim(\Gamma)$.
\end{Theorem}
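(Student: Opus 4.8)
The plan is to follow the template of Theorem~\ref{14} and the Brigham--Chartrand--Dutton bound in Theorem~\ref{1}, treating the lower and upper bounds separately and exploiting that a resolving total dominating set (\textbf{RTDS}) is, by definition, at once a resolving set and a total dominating set.

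For the lower bound I would observe that every \textbf{RTDS} $S$ is in particular a resolving set, so $|S| \geq \dim(\Gamma)$; minimising over all such $S$ yields $\gamma_{rt}(\Gamma) \geq \dim(\Gamma)$. Viewing $S$ instead as a total dominating set gives only $\gamma_{rt}(\Gamma) \geq \gamma_t(\Gamma)$, which under the hypothesis $\gamma_t(\Gamma) \leq \dim(\Gamma)$ is the weaker of the two; this is precisely why the statement records $\dim(\Gamma)$ as the lower bound here, mirroring the way Theorem~\ref{14} records $\gamma_t(\Gamma)$ in the opposite regime. In effect the two theorems together express the single bound $\max\{\gamma_t(\Gamma),\dim(\Gamma)\} \leq \gamma_{rt}(\Gamma)$.

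For the upper bound I would fix a minimum total dominating set $D$ with $|D| = \gamma_t(\Gamma)$ and a basis $W$ with $|W| = \dim(\Gamma)$, and test the union $D \cup W$. Two containment facts finish it: (i) any superset of a resolving set resolves $\Gamma$, so $D \cup W \supseteq W$ is resolving; and (ii) any superset of a total dominating set is again total dominating, because every vertex still has a neighbour inside $D \subseteq D \cup W$. Thus $D \cup W$ is an \textbf{RTDS} and $\gamma_{rt}(\Gamma) \leq |D \cup W| \leq |D| + |W| = \gamma_t(\Gamma) + \dim(\Gamma)$. Note that this half of the argument needs no hypothesis at all.

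I do not expect a genuine obstacle, since both inequalities reduce to set-theoretic containments; the one step deserving a moment's care is (ii), verifying that enlarging a total dominating set preserves the property for \emph{every} vertex, including those just added --- immediate here, but worth stating explicitly, since total domination is more delicate than ordinary domination (the condition must also hold on $D\cup W$ itself). As with Theorem~\ref{14}, I would close by exhibiting a small rooted-product instance on which the upper bound is attained, confirming its sharpness.
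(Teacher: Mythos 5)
Your proposal is correct and follows essentially the same route as the paper, which simply asserts the bounds as obvious (an \textbf{RTDS} is simultaneously a resolving set and a total dominating set, giving the lower bound, with the upper bound from combining a minimum total dominating set with a metric basis) and then points to $P_2 \circ_v K_3$ for sharpness. You merely make explicit the union argument $D \cup W$ and the superset-preservation facts that the paper leaves unstated, which is a faithful filling-in rather than a different approach.
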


\begin{proof}
	Since the resolving total dominating set must resolve and total dominate the vertices of the graph it is obvious when $\gamma_t(\Gamma) \leq \dim(\Gamma)$, then $\dim(\Gamma) \leq \gamma_{rt}(\Gamma) \leq \gamma_t(\Gamma)+ \dim(\Gamma)$. The upper bound could be exhibited by the graph which exists as $P_2 \circ_v K_3$. Consult Figure \ref{ex}.
\end{proof}

\begin{Theorem} \label{9}
	For every graph $\Gamma$, $\max\{ \gamma_t(\Gamma), \dim(\Gamma) \} \leq \gamma_{rt}(\Gamma) \leq \gamma_t(\Gamma)+ \dim(\Gamma)$.
\end{Theorem}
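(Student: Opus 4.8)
The plan is to prove the two inequalities separately, each directly from the definition of a resolving total dominating set (\textbf{RTDS}), recalling that such a set $D \subseteq V(\Gamma)$ is by definition both a resolving set and a total dominating set of $\Gamma$. This mirrors the structure of Theorem~\ref{1}, where the analogous bounds were established for $\gamma_r$ in terms of $\gamma$ and $\dim$; here $\gamma_t$ simply plays the role that $\gamma$ played there.

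For the lower bound I would take any \textbf{RTDS} $D$ of $\Gamma$. Because $D$ is a total dominating set, $|D| \geq \gamma_t(\Gamma)$, and because $D$ is simultaneously a resolving set, $|D| \geq \dim(\Gamma)$; hence $|D| \geq \max\{\gamma_t(\Gamma), \dim(\Gamma)\}$. Minimising over all such $D$ yields $\gamma_{rt}(\Gamma) \geq \max\{\gamma_t(\Gamma), \dim(\Gamma)\}$. This single inequality subsumes the two case-dependent lower bounds of Theorems~\ref{14} and~\ref{15}, since writing the bound as a maximum unifies the regimes $\gamma_t(\Gamma) \geq \dim(\Gamma)$ and $\gamma_t(\Gamma) \leq \dim(\Gamma)$.

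For the upper bound I would exhibit an \textbf{RTDS} of size at most $\gamma_t(\Gamma) + \dim(\Gamma)$. Let $S$ be a minimum total dominating set, so $|S| = \gamma_t(\Gamma)$, and let $W$ be a basis, so $|W| = \dim(\Gamma)$. I claim $S \cup W$ is an \textbf{RTDS}. Since $S \cup W \supseteq S$ and every vertex of $\Gamma$ already has a neighbour in $S$, every vertex has a neighbour in $S \cup W$, so $S \cup W$ totally dominates $\Gamma$. Likewise $S \cup W \supseteq W$, and adjoining landmarks to a resolving set only refines the distance codes, so $S \cup W$ resolves $\Gamma$. Therefore $\gamma_{rt}(\Gamma) \leq |S \cup W| \leq |S| + |W| = \gamma_t(\Gamma) + \dim(\Gamma)$.

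There is no substantial obstacle in this argument; the only facts that must be stated rather than merely asserted are the two monotonicity properties—that enlarging a total dominating set preserves total domination, and that enlarging a resolving set preserves resolvability—both immediate from the definitions. Finally, to confirm that the upper bound is attained, I would reuse the extremal graph $P_2 \circ_v K_3$ invoked in Theorems~\ref{14} and~\ref{15}, for which $\gamma_{rt}$ equals $\gamma_t + \dim$.
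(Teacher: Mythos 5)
Your proof is correct, and it is actually more complete than the argument the paper gives. The paper derives this theorem purely by citing Theorems~\ref{14} and~\ref{15}, i.e.\ by splitting into the two cases $\gamma_t(\Gamma) \geq \dim(\Gamma)$ and $\gamma_t(\Gamma) \leq \dim(\Gamma)$ and merging the resulting bounds into a maximum; moreover, the proofs of those two lemmas only assert the bounds as ``obvious'' from the definition of an \textbf{RTDS} and never exhibit a witnessing set for the upper bound. You dispense with the case analysis entirely---correctly observing that the $\max$ formulation makes the split unnecessary---and, more importantly, you supply the one piece of content the paper leaves implicit: the explicit construction $S \cup W$ (a minimum total dominating set united with a metric basis) together with the two monotonicity facts that supersets of total dominating sets totally dominate and supersets of resolving sets resolve. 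This is the standard argument behind Theorem~\ref{1} of Brigham et al., transplanted from $\gamma$ to $\gamma_t$, so the mathematics underlying both routes is the same; your version simply makes it rigorous in one pass. One caveat applies equally to your write-up and to the paper's statement: ``for every graph $\Gamma$'' should be read as ``for every connected graph without isolated vertices,'' since $\gamma_t(\Gamma)$ requires $\Gamma$ to be isolate-free and finite distance codes require connectivity; under that hypothesis $V(\Gamma)$ itself is an \textbf{RTDS}, so the minimum you take in the lower-bound step is over a nonempty family and your argument goes through.
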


\begin{proof}
	From Theorem \ref{14} and Theorem \ref{15}, we get $\max\{ \gamma_t(\Gamma), \dim(\Gamma) \} \leq \gamma_{rt}(\Gamma) \leq \gamma_t(\Gamma)+ \dim(\Gamma)$. 
\end{proof}

\begin{Theorem} \label{16}
	If $\gamma_c(\Gamma) \geq \dim(\Gamma)$, then $\gamma_c(\Gamma) \leq \gamma_{rc}(\Gamma) \leq \gamma_c(\Gamma)+ \dim(\Gamma)$.
\end{Theorem}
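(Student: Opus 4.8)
The plan is to mirror the argument behind Theorems \ref{14} and \ref{15}, replacing total domination by connected domination throughout, and to treat the lower and upper bounds separately. Since every resolving connected dominating set is by definition both a resolving set and a connected dominating set, its cardinality is bounded below by $\dim(\Gamma)$ and by $\gamma_c(\Gamma)$ simultaneously, so in general $\gamma_{rc}(\Gamma) \geq \max\{\gamma_c(\Gamma), \dim(\Gamma)\}$. Invoking the hypothesis $\gamma_c(\Gamma) \geq \dim(\Gamma)$, this maximum collapses to $\gamma_c(\Gamma)$, which yields the lower bound $\gamma_{rc}(\Gamma) \geq \gamma_c(\Gamma)$.

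For the upper bound I would let $D_c$ be a minimum connected dominating set with $|D_c| = \gamma_c(\Gamma)$ and let $W$ be a basis of $\Gamma$ with $|W| = \dim(\Gamma)$, and then argue that $D_c \cup W$ is a resolving connected dominating set. It is immediately a resolving set (it contains $W$) and a dominating set (it contains $D_c$), so the only substantive point is that $\Gamma[D_c \cup W]$ is connected. Once that is verified, $D_c \cup W$ qualifies as a resolving connected dominating set and the bound $\gamma_{rc}(\Gamma) \leq |D_c| + |W| = \gamma_c(\Gamma) + \dim(\Gamma)$ follows at once.

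The hard part, and essentially the only nonroutine step, is this connectivity check. I would handle it as follows: $\Gamma[D_c]$ is connected because $D_c$ is a connected dominating set, and every vertex $w \in W \setminus D_c$ is dominated by $D_c$, hence has a neighbour in $D_c$. Thus each such $w$ is attached by an edge to the connected induced subgraph $\Gamma[D_c]$, forcing $\Gamma[D_c \cup W]$ to be connected. This establishes both inequalities.

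Finally, as in Theorems \ref{14} and \ref{15}, sharpness of the upper bound can be exhibited on the rooted product $P_2 \circ_v K_3$ of Figure \ref{ex}: there the two leaf pairs form twin sets, so by Theorem \ref{22} we have $\dim = 2$, while the two root vertices form the unique connected dominating core, giving $\gamma_c = 2$; any resolving connected dominating set must then contain both roots together with one vertex from each twin pair, so $\gamma_{rc} = 4 = \gamma_c + \dim$.
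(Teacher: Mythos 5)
Your proof is correct, and it is in fact more complete than the paper's own argument. The paper's proof of Theorem \ref{16} merely declares both bounds ``obvious'' from the definition of a resolving connected dominating set and points to $P_2 \circ_v K_3$ for sharpness; it never justifies the upper bound, and in particular never addresses the one genuinely nontrivial point, namely why a resolving, dominating, connected set of size $\gamma_c(\Gamma)+\dim(\Gamma)$ must exist. Your construction supplies exactly this missing content: with $D_c$ a minimum connected dominating set and $W$ a metric basis, the set $D_c \cup W$ is resolving and dominating by the superset properties, and your connectivity check---every $w \in W \setminus D_c$ has a neighbour in $D_c$ because $D_c$ dominates, so each added vertex attaches by an edge to the connected subgraph $\Gamma[D_c]$---is sound and is precisely the step the paper skips. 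Two further observations. First, your argument uses the hypothesis $\gamma_c(\Gamma) \geq \dim(\Gamma)$ only to collapse $\max\{\gamma_c(\Gamma), \dim(\Gamma)\}$ to $\gamma_c(\Gamma)$ in the lower bound, so you have really proved the unconditional statement $\max\{\gamma_c(\Gamma), \dim(\Gamma)\} \leq \gamma_{rc}(\Gamma) \leq \gamma_c(\Gamma)+\dim(\Gamma)$, i.e.\ Theorem \ref{10} directly, which renders the case split between Theorems \ref{16} and \ref{17} superfluous. Second, your sharpness verification on $P_2 \circ_v K_3$ is correct and again more detailed than the paper's bare pointer to Figure \ref{ex}: Theorem \ref{22} gives only $\dim \geq 2$, so for equality you should also note that one leaf from each triangle forms a resolving set; $\gamma_c = 2$ holds via the two roots; and since the twin-pair representatives lie at distance $3$, no $3$-element set containing one vertex from each twin pair can induce a connected subgraph, forcing $\gamma_{rc} = 4 = \gamma_c + \dim$ as you claim.
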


\begin{proof}
	Since the resolving connected dominating set must resolve and dominate the vertices of the graph with the condition that the induced set must be connected, it is obvious when $\gamma_c(\Gamma) \geq \dim(\Gamma)$, so $\gamma_c(\Gamma) \leq \gamma_{rc}(\Gamma) \leq \gamma_c(\Gamma)+ \dim(\Gamma)$. The upper bound could be exhibited by the graph which exists as $P_2 \circ_v K_3$. Consult Figure \ref{ex}.
\end{proof}

\begin{Theorem} \label{17}
	If $\gamma_c(\Gamma) \leq \dim(\Gamma)$, then $\dim(\Gamma) \leq \gamma_{rc}(\Gamma) \leq \gamma_c(\Gamma)+ \dim(\Gamma)$.
\end{Theorem}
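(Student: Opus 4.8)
The plan is to mirror the argument already used for Theorems~\ref{14} and~\ref{16}, splitting the claim into its lower and upper bounds and exploiting that a resolving connected dominating set is, by definition, simultaneously a resolving set and a connected dominating set.

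For the lower bound, I would first observe that every \textbf{RCDS} is in particular a resolving set, so any such set has cardinality at least $\dim(\Gamma)$; it is also a connected dominating set, so its cardinality is at least $\gamma_c(\Gamma)$. Hence $\gamma_{rc}(\Gamma) \geq \max\{\gamma_c(\Gamma), \dim(\Gamma)\}$. Under the hypothesis $\gamma_c(\Gamma) \leq \dim(\Gamma)$ this maximum equals $\dim(\Gamma)$, which gives precisely $\dim(\Gamma) \leq \gamma_{rc}(\Gamma)$.

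For the upper bound the approach is constructive: take a minimum connected dominating set $C$ with $|C| = \gamma_c(\Gamma)$ together with a basis $W$ with $|W| = \dim(\Gamma)$, and show that $C \cup W$ is a \textbf{RCDS}. It resolves $\Gamma$ because it contains the resolving set $W$, and it dominates $\Gamma$ because it contains $C$. The only point needing verification is that $\Gamma[C \cup W]$ is connected, and here the key observation is that $\Gamma[C]$ is already connected while $C$ dominates $\Gamma$, so each vertex of $W \setminus C$ is adjacent to some vertex of $C$ and thus attaches to the connected subgraph $\Gamma[C]$ within the induced subgraph. Consequently $\Gamma[C \cup W]$ is connected, $C \cup W$ is a resolving connected dominating set, and $\gamma_{rc}(\Gamma) \leq |C \cup W| \leq \gamma_c(\Gamma) + \dim(\Gamma)$.

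The main (and only mild) obstacle is this connectivity check; everything else follows immediately from the definitions and from the hypothesis that makes $\dim(\Gamma)$ the binding lower bound. To close, I would point to $P_2 \circ_v K_3$ (Figure~\ref{ex}) as the witness showing the upper bound can be attained, exactly as in the companion theorems.
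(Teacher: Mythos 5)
Your proposal is correct and follows essentially the same route as the paper, which also derives the lower bound from the fact that an \textbf{RCDS} is simultaneously a resolving set and a connected dominating set, and obtains the upper bound from combining a minimum connected dominating set with a basis, citing $P_2 \circ_v K_3$ (Figure~\ref{ex}) as the graph attaining it. In fact your write-up is more careful than the paper's, which dismisses everything as ``obvious'': you supply the one nontrivial verification --- that $\Gamma[C \cup W]$ is connected because every vertex of $W \setminus C$ has a neighbor in the connected dominating set $C$ --- which the paper leaves implicit.
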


\begin{proof}
	Since the resolving connected dominating set must resolve and dominate the vertices of the graph with the condition that the induced set must be connected, it is obvious when $\gamma_c(\Gamma) \leq \dim(\Gamma)$, then $\dim(\Gamma) \leq \gamma_{rc}(\Gamma) \leq \gamma_c(\Gamma)+ \dim(\Gamma)$. The upper bound could be exhibited by the graph which exists as $P_2 \circ_v K_3$. Consult Figure \ref{ex}.
\end{proof}

\begin{Theorem} \label{10}
	For every graph $\Gamma$, $\max\{ \gamma_c(\Gamma), \dim(\Gamma) \} \leq \gamma_{rc}(\Gamma) \leq \gamma_c(\Gamma)+ \dim(\Gamma)$.
\end{Theorem}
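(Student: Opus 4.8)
The plan is to obtain this bound as an immediate consequence of the two conditional statements already established in Theorem~\ref{16} and Theorem~\ref{17}, via a dichotomy on the relative sizes of $\gamma_c(\Gamma)$ and $\dim(\Gamma)$. These two cases are exhaustive (overlapping only at equality), so no graph escapes the analysis. This mirrors exactly the route taken for Theorem~\ref{9}, where Theorem~\ref{14} and Theorem~\ref{15} were stitched together to handle the total-domination analogue.

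First I would treat the case $\gamma_c(\Gamma) \geq \dim(\Gamma)$. Here $\max\{\gamma_c(\Gamma),\dim(\Gamma)\} = \gamma_c(\Gamma)$, and Theorem~\ref{16} supplies precisely $\gamma_c(\Gamma) \leq \gamma_{rc}(\Gamma) \leq \gamma_c(\Gamma)+\dim(\Gamma)$, which is the desired chain with the maximum identified on the left. Next I would treat the complementary case $\gamma_c(\Gamma) \leq \dim(\Gamma)$. Now $\max\{\gamma_c(\Gamma),\dim(\Gamma)\} = \dim(\Gamma)$, and Theorem~\ref{17} yields $\dim(\Gamma) \leq \gamma_{rc}(\Gamma) \leq \gamma_c(\Gamma)+\dim(\Gamma)$, again matching the claim. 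Since the upper bound $\gamma_c(\Gamma)+\dim(\Gamma)$ is common to both cases and the lower bound equals $\max\{\gamma_c(\Gamma),\dim(\Gamma)\}$ in each, merging the two cases produces the stated two-sided inequality for every graph $\Gamma$.

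There is essentially no obstacle in this final step: the entire substance resides in the two prior theorems, each of which rests on the structural observation that any resolving connected dominating set must simultaneously resolve the graph (forcing $\gamma_{rc}\geq\dim$) and form a connected dominating set (forcing $\gamma_{rc}\geq\gamma_c$), while the union of a basis with a minimum connected dominating set is itself an \textbf{RCDS} (giving the additive upper bound). If anything merits care, it is only the bookkeeping at the boundary $\gamma_c(\Gamma)=\dim(\Gamma)$, where both cases apply and must agree; they do, since both collapse to the same chain. The sharpness of the upper bound is already witnessed by $P_2 \circ_v K_3$ as noted in Theorem~\ref{16} and Theorem~\ref{17}, so no new extremal example is required here.
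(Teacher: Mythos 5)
Your proposal is correct and follows essentially the same route as the paper, which likewise derives Theorem~\ref{10} by combining the two conditional bounds of Theorem~\ref{16} and Theorem~\ref{17} through the dichotomy on $\gamma_c(\Gamma)$ versus $\dim(\Gamma)$. You merely spell out the case analysis and the agreement at the boundary $\gamma_c(\Gamma)=\dim(\Gamma)$ more explicitly than the paper does.
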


\begin{proof}
	From Theorem \ref{16} and Theorem \ref{17}, we get $\max\{ \gamma_c(\Gamma), \dim(\Gamma) \} \leq \gamma_{rc}(\Gamma) \leq \gamma_c(\Gamma)+ \dim(\Gamma)$. 
\end{proof}

In Figure \ref{ex}, we could see that the resolving total domination and resolving connected domination numbers attain upper bound whereas in case of resolving domination it attains the lower bound. This led to an interesting theorem where we try to relate the resolving domination parameters for any graph $\Gamma$.

\begin{figure}[H]
	\centering
	\includegraphics[scale=0.6]{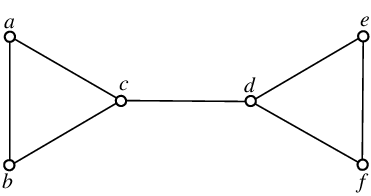}
	\quad \quad  \quad
	\includegraphics[scale=0.6]{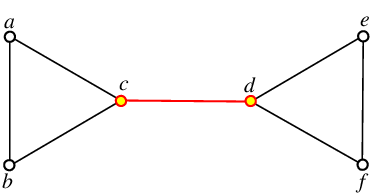}
	\quad \quad \quad
	\includegraphics[scale=0.6]{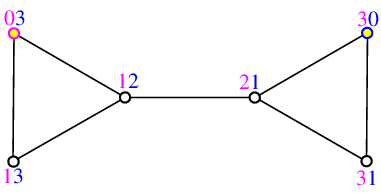}
	\caption{\textbf{(a)} $\Gamma$; \textbf{(b)} \textbf{TD} and \textbf{CD} set of $\Gamma$; \textbf{(c)} Resolving set of $\Gamma$}
	\label{ex}
\end{figure}

\begin{Theorem} 
	For any graph $\Gamma$, $\gamma_r(\Gamma) \leq \gamma_{rt}(\Gamma) \leq \gamma_{rc}(\Gamma)$.
\end{Theorem}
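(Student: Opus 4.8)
The plan is to prove both inequalities by exhibiting a containment of the relevant set classes rather than by any metric computation: I would show that every resolving total dominating set is automatically a resolving dominating set, and that every resolving connected dominating set is automatically a resolving total dominating set. Since each of $\gamma_r$, $\gamma_{rt}$, $\gamma_{rc}$ is defined as the minimum cardinality over its own class of sets, these two containments immediately force the chain $\gamma_r(\Gamma) \le \gamma_{rt}(\Gamma) \le \gamma_{rc}(\Gamma)$.

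For the first inequality $\gamma_r(\Gamma) \le \gamma_{rt}(\Gamma)$, I would take a minimum \textbf{RTDS} $D$, so that $|D| = \gamma_{rt}(\Gamma)$. By definition $D$ resolves $\Gamma$. Moreover, total domination requires that every vertex of $V(\Gamma)$, and in particular every vertex of $V(\Gamma)\setminus D$, has a neighbor in $D$; this is precisely the ordinary domination condition. Hence $D$ is both resolving and dominating, i.e. a resolving dominating set, and therefore $\gamma_r(\Gamma) \le |D| = \gamma_{rt}(\Gamma)$.

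For the second inequality $\gamma_{rt}(\Gamma) \le \gamma_{rc}(\Gamma)$, I would take a minimum \textbf{RCDS} $D$. Again $D$ resolves and dominates $\Gamma$, and additionally $\Gamma[D]$ is connected. The key step is to upgrade plain domination to total domination: when $\Gamma[D]$ is connected and $|D| \ge 2$, no vertex of $D$ is isolated in $\Gamma[D]$, so each vertex of $D$ has a neighbor lying inside $D$; combined with the fact that every vertex outside $D$ has a neighbor in $D$ by domination, we conclude that every vertex of $\Gamma$ has a neighbor in $D$, so $D$ totally dominates $\Gamma$. Thus $D$ is a resolving total dominating set and $\gamma_{rt}(\Gamma) \le |D| = \gamma_{rc}(\Gamma)$.

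The hard part is the degenerate case $|D| = 1$ in the second step: a single-vertex connected dominating set is connected and dominating but never totally dominating, since a vertex is not its own neighbor. Such a set can resolve $\Gamma$ only when $\dim(\Gamma) = 1$, which forces $\Gamma$ to be a path, leaving essentially $\Gamma = P_2$ as the one genuine problem instance (there $\gamma_{rc} = 1 < 2 = \gamma_{rt}$). I would therefore either impose that every connected dominating set has order at least two (for example, connected graphs of order at least three), or invoke the standing isolate-free hypothesis used throughout this section and dispose of the remaining small cases by direct inspection.
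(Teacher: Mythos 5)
Your argument is correct, and it is essentially the mechanism behind the paper's one-line proof, which merely asserts that the chain $\gamma(\Gamma) \leq \gamma_t(\Gamma) \leq \gamma_c(\Gamma)$ transfers ``by the definition'' to the resolving versions. Your version makes explicit what that transfer actually consists of: the class containments (every \textbf{RTDS} is an \textbf{RDS}; every \textbf{RCDS} of cardinality at least two is an \textbf{RTDS}), after which minimality over each class yields the inequalities directly. The genuinely valuable part of your write-up is the degenerate case you flagged, which the paper overlooks: when the minimum \textbf{RCDS} is a single vertex, the upgrade from connected domination to total domination fails, and $P_2$ is a bona fide counterexample to the theorem as stated, since $\gamma_{rc}(P_2) = 1 < 2 = \gamma_{rt}(P_2)$. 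Indeed, even the unresolved premise the paper invokes is not true ``for any graph'': $\gamma_t \leq \gamma_c$ requires a connected graph of order at least three ($\gamma_c(P_2) = 1 < 2 = \gamma_t(P_2)$), so the paper's proof inherits the same gap. Your proposed repair --- assuming connected graphs of order at least three, or equivalently $|D| \geq 2$ for the connected dominating sets in play --- is exactly the right hypothesis, and it costs nothing for the paper's intended application, since $FCN(l)$ has order $4^{l+1} \geq 4$. In short: same route as the paper, but yours is rigorous where the paper's is hand-waved, and you correctly identified that the stated generality ``for any graph $\Gamma$'' must be weakened.
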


\begin{proof}
	Since $\gamma(\Gamma) \leq \gamma_t(\Gamma) \leq \gamma_c(\Gamma)$ it is also true by the definition for $\gamma_r(\Gamma) \leq \gamma_{rt}(\Gamma) \leq \gamma_{rc}(\Gamma)$.
\end{proof}

\begin{Theorem} \label{11}
	$\gamma(FCN(l)) = 4(\gamma(FCN(l-1)))-2$ for $l \geq 1$.
\end{Theorem}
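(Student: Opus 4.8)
The plan is to exploit the rooted-product description recorded earlier, namely $FCN(l) = C_4 \circ_v FCN(l-1)$ with the four roots $0010(01)^{l-1}, 0110(01)^{l-1}, 1010(01)^{l-1}, 1110(01)^{l-1}$, and to read off the two candidate values from Theorem \ref{2}. Taking the base graph to be $\Gamma = C_4$ (so that $n = 4$ and, since a $4$-cycle is dominated by two antipodal vertices, $\gamma(C_4) = 2$) and the factor to be $\Omega = FCN(l-1)$, Theorem \ref{2} immediately yields
\[
\gamma(FCN(l)) \in \{\, 4\gamma(FCN(l-1)),\ 4\gamma(FCN(l-1)) - 4 + \gamma(C_4) \,\} = \{\, 4\gamma(FCN(l-1)),\ 4\gamma(FCN(l-1)) - 2 \,\}.
\]
Thus the whole theorem reduces to showing that the smaller of these two values is attained; since the two options are distinct, it suffices to exhibit a dominating set of $FCN(l)$ of size $4\gamma(FCN(l-1)) - 2$.

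The structural fact I would isolate is that the root $v$ of $FCN(l-1)$ can be dominated ``for free'': I claim there is a set $S$ of size $\gamma(FCN(l-1)) - 1$ dominating every vertex of $FCN(l-1)$ except $v$. Granting this, observe that $S \cup \{v\}$ is a minimum dominating set of $FCN(l-1)$ containing the root, so both ingredients used below come from the single claim. Now label the roots around the connecting cycle as $r_{00} - r_{10} - r_{11} - r_{01} - r_{00}$ and fix the antipodal pair $\{r_{00}, r_{11}\}$, which dominates the $4$-cycle on the roots. Form $D \subseteq V(FCN(l))$ by placing, in the copies rooted at $r_{00}$ and $r_{11}$, a minimum dominating set of $FCN(l-1)$ containing the root (cost $\gamma(FCN(l-1))$ each), and, in the copies rooted at $r_{10}$ and $r_{01}$, a copy of $S$ (cost $\gamma(FCN(l-1)) - 1$ each). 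Every non-root vertex of every copy is dominated inside its own copy; the roots $r_{00}, r_{11}$ lie in $D$; and although $r_{10}, r_{01}$ are not dominated inside their copies, each is adjacent in the root-cycle to both $r_{00}$ and $r_{11}$ and hence is dominated across a connecting edge. Therefore $D$ dominates $FCN(l)$ and $|D| = 2\gamma(FCN(l-1)) + 2(\gamma(FCN(l-1)) - 1) = 4\gamma(FCN(l-1)) - 2$, which closes the argument once the claim is proved.

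The hard part will be the claim that $FCN(l-1)$ with its root deleted can be dominated using $\gamma(FCN(l-1)) - 1$ vertices. In general, writing $d$ for the least number of vertices dominating $V(FCN(l-1)) \setminus \{v\}$, one only knows $\gamma(FCN(l-1)) - 1 \le d \le \gamma(FCN(l-1))$, since adjoining $v$ converts such a set into a dominating set and conversely; the content is ruling out $d = \gamma(FCN(l-1))$. Note this cannot be sidestepped: by the case analysis underlying Theorem \ref{2}, $d = \gamma(FCN(l-1))$ would force each copy to cost at least $\gamma(FCN(l-1))$ and hence would give the \emph{larger} value $4\gamma(FCN(l-1))$, so $d = \gamma(FCN(l-1)) - 1$ is in fact equivalent to the statement being proved.

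To establish $d = \gamma(FCN(l-1)) - 1$, I would produce a minimum dominating set of $FCN(l-1)$ in which the root $v$ is a \emph{redundant} dominator, meaning every vertex of $N[v]$ other than $v$ is already dominated by another member of the set; deleting $v$ then yields the desired $S$. Concretely I would use the self-similarity: the root of $FCN(l-1)$ sits inside one distinguished copy of $FCN(l-2)$, so I would choose a minimum dominating set positioned so that the in-copy neighbours of $v$ are covered by other selected vertices, which is possible because $v$ is an interior vertex whose neighbours admit alternative dominators. The cleanest organisation is induction on $l$, carrying the strengthened hypothesis that ``$FCN(k)$ has a minimum dominating set containing its root in which the root is redundant''; the property required at level $l-1$ is then supplied automatically and the recurrence closes. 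I expect this inductive step --- threading a redundant-root minimum dominating set through the rooted product while keeping its size at $\gamma(FCN(l-1))$ --- to be the principal technical obstacle, with everything else following from Theorem \ref{2} and the counting above.
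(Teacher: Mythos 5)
Your opening move coincides with the paper's: both apply Theorem \ref{2} with $\Gamma = C_4$, $n=4$, $\gamma(C_4)=2$ to get $\gamma(FCN(l)) \in \{4\gamma(FCN(l-1)),\, 4\gamma(FCN(l-1))-2\}$, and both then reduce to exhibiting a dominating set of the smaller size. The genuine gap is that your load-bearing claim --- the existence of a set $S$ with $|S| = \gamma(FCN(l-1))-1$ dominating $V(FCN(l-1))\setminus\{v\}$, equivalently a minimum dominating set of $FCN(l-1)$ containing the root in which the root is redundant --- is never proved. You sketch an intended induction on $l$ with this as a strengthened hypothesis, but you stop at the plan, explicitly calling the inductive step ``the principal technical obstacle.'' Since, as you yourself observe, this claim is \emph{equivalent} to the theorem (via the case analysis underlying Theorem \ref{2}), the proposal as written accomplishes only the two-candidate reduction: the theorem has been restated in an equivalent form, not established.

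For comparison, the paper discharges this step constructively (if tersely): it writes down the recursive set $D_{FCN(l)} = \{ab \mathbin\Vert D_{FCN(l-1)} : a,b \in \{0,1\}\} \setminus \{1110(01)^{l-1}, 0010(01)^{l-1}\}$ with base $D_{FCN(1)}=\{1101, 1010, 1001, 0110, 0101, 0001\}$. This is your construction with the two antipodal root pairs in the connecting cycle $00\,\text{--}\,10\,\text{--}\,11\,\text{--}\,01$ swapped: the roots are deleted in the copies $00$ and $11$, and those deleted roots are dominated across the connecting $C_4$ by the retained roots $1010(01)^{l-1}$ and $0110(01)^{l-1}$. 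Moreover the base set realizes exactly your strengthened invariant: it contains the root $1001$ of $FCN(1)$, and that root is redundant there (its copy-neighbours $1000$ and $1011$ are also dominated by $1010$), which is what makes the deletion of two roots per level cost exactly $2$ while domination survives. So your proposed induction hypothesis is the right one and is in fact witnessed by the paper's explicit set; what your write-up is missing is precisely the finite content --- a concrete base-level set and a verification that the root-present-and-redundant property propagates through the rooted product --- without which the argument does not close. (To be fair, the paper's own justification ``since $4\gamma(FCN(l-1))-2$ is minimum'' is a non sequitur, and it too leaves the domination and redundancy of its explicit set unverified beyond a figure; but it at least pins down a checkable set, which your proposal does not.)
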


\begin{proof}
	\begin{sloppypar}
		By Theorem \ref{2}, we get $\gamma(FCN(l)) \in \{4(\gamma(FCN(l-1)))-2, 4(\gamma(FCN(l-1)))\}$ for $l \geq 1$. Since $4(\gamma(FCN(l-1)))-2$ is minimum, we get $\gamma(FCN(l)) = 4(\gamma(FCN(l-1)))-2$ for $l \geq 1$. The \textbf{DS} is $D_{FCN(l)} =\{ab \ || \ D_{FCN(l-1)} : a,b \in \{0,1\} \} \setminus \{1110(01)^{(l-1)}, 0010(01)^{(l-1)}\}$ for $l \ge 1$, where $D_{FCN(1)}=\{1101, 1010, 1001, 0110, 0101, 0001\}$. See Figure \ref{FD1}.
	\end{sloppypar}
\end{proof}

\begin{figure}[H] 
	\centering
	\includegraphics[scale=1]{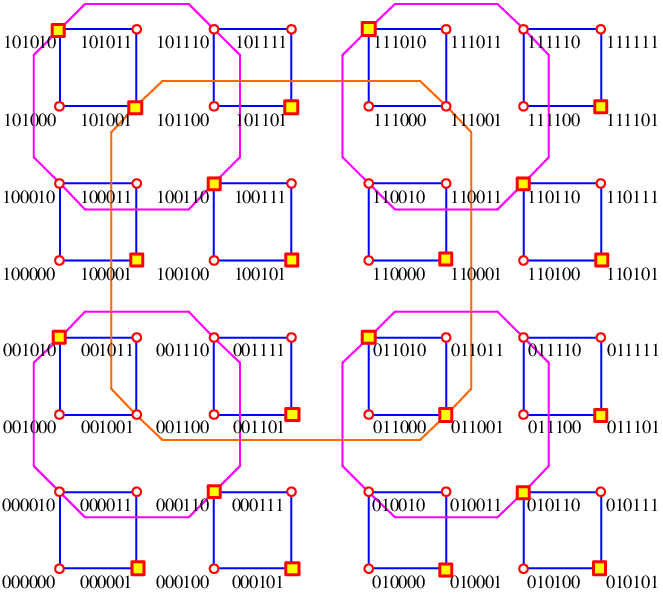}  
	\caption{\textbf{DS} and \textbf{IDS} of $FCN(2)$} \label{FD1}
\end{figure}

\begin{Corollary} \label{19}
	$\gamma_i(FCN(l)) = 4(\gamma_i(FCN(l-1)))-2$ for $l \geq 1$.
\end{Corollary}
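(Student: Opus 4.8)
The plan is to treat this as the independent-domination analogue of Theorem \ref{11}, swapping the ordinary-domination rooted-product bound (Theorem \ref{2}) for its independent counterpart (Theorem \ref{20}). Recall that $FCN(l) = C_4 \circ_v FCN(l-1)$ is a rooted product whose skeleton is the $4$-cycle $C_4$ (so $n = 4$) and whose factor is $\Omega = FCN(l-1)$. Since $C_4$ is isolate-free with $n = 4 \geq 2$ and $|V(FCN(l-1))| \geq 4 \geq 2$ for every $l \geq 1$, the hypotheses of Theorem \ref{20} are met.

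First I would apply Theorem \ref{20}, which yields $\gamma_i(FCN(l)) \in \{4\gamma_i(FCN(l-1)),\ 4\gamma_i(FCN(l-1)) - 4 + \gamma_i(C_4)\}$. The only numerical input needed is $\gamma_i(C_4) = 2$: a pair of antipodal vertices of $C_4$ is independent and dominating, while no single vertex dominates the cycle. Substituting collapses the two candidates to $\{4\gamma_i(FCN(l-1)),\ 4\gamma_i(FCN(l-1)) - 2\}$, whose minimum is $4\gamma_i(FCN(l-1)) - 2$.

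It then remains to certify that the smaller value is attained, which I would do by exhibiting the explicit set $\{ab \ || \ D_{FCN(l-1)} : a,b \in \{0,1\}\} \setminus \{1110(01)^{l-1}, 0010(01)^{l-1}\}$ — the same construction as in Theorem \ref{11}, anchored at $D_{FCN(1)} = \{1101, 1010, 1001, 0110, 0101, 0001\}$ — and checking by induction that it is simultaneously dominating and independent. Domination of each of the four prefixed copies follows from the inductive hypothesis that $D_{FCN(l-1)}$ independently dominates $FCN(l-1)$, and the two omitted root-vertices remain covered through the bridging edges of the rooted-product construction.

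The step I expect to be the main obstacle is the independence verification at the junctions. One must confirm that no two retained vertices become adjacent across the four added edges $\{(00\ldots,10\ldots),(10\ldots,11\ldots),(11\ldots,01\ldots),(01\ldots,00\ldots)\}$, and that the two deleted roots are precisely the ones whose removal keeps the induced subgraph edgeless while preserving domination. Once each of these four bridging edges is shown to have at most one endpoint in the candidate set, independence is established, the exhibited set is an independent dominating set of size $4\gamma_i(FCN(l-1)) - 2$, and the recurrence follows.
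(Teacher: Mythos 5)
Your proposal follows essentially the same route as the paper: apply Theorem \ref{20} with $\Gamma = C_4$ and $\Omega = FCN(l-1)$ to restrict $\gamma_i(FCN(l))$ to the two candidates $\{4\gamma_i(FCN(l-1)),\ 4\gamma_i(FCN(l-1))-2\}$, then certify the smaller value by exhibiting exactly the same set $\{ab \,\Vert\, D_{FCN(l-1)} : a,b \in \{0,1\}\} \setminus \{1110(01)^{l-1}, 0010(01)^{l-1}\}$ anchored at $D_{FCN(1)}=\{1101, 1010, 1001, 0110, 0101, 0001\}$. If anything, you are more careful than the paper, which omits the computation $\gamma_i(C_4)=2$ and simply asserts the minimum is attained, whereas you correctly flag that the independence of the exhibited set across the four bridging edges is the step that actually needs verification.
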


\begin{proof}
	\begin{sloppypar}
		By Theorem \ref{20}, we get $\gamma_i(FCN(l)) \in \{4(\gamma_i(FCN(l-1)))-2, 4(\gamma_i(FCN(l-1)))\}$ for $l \geq 1$. Since $4(\gamma_i(FCN(l-1)))-2$ is minimum, we get $\gamma_i(FCN(l)) = 4(\gamma_i(FCN(l-1)))-2$ for $l \geq 1$. The \textbf{IDS} is $D_{FCN(l)} =\{ab \ || \ D_{FCN(l-1)} : a,b \in \{0,1\} \} \setminus \{1110(01)^{(l-1)}, 0010(01)^{(l-1)}\}$ for $l \ge 1$, where $D_{FCN(1)}=\{1101, 1010, 1001, 0110, 0101, 0001\}$. See Figure \ref{FD1}.
	\end{sloppypar}
\end{proof}

\begin{Theorem} \label{21}
	$\gamma_t(FCN(1)) = 8$.
\end{Theorem}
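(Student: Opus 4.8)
The plan is to identify $FCN(1)$ with the rooted product $C_4 \circ_v C_4$ and to combine an explicit construction with a copy-by-copy lower bound. First I would pin down the structure. Taking $l=1$ in the recursive definition, $FCN(1)$ has $16$ vertices organised into four vertex-disjoint $4$-cycles (the copies obtained by prefixing $FCN(0)=C_4$ with $00,01,10,11$), together with the four extra edges that join the roots $0010,1010,1110,0110$ into an outer $4$-cycle. The crucial structural fact is that these four connecting edges are incident only with the roots: inside each copy exactly one vertex (the root) has its degree raised to $4$, while the other three vertices keep degree $2$ and have \emph{both} of their neighbours inside the same copy. This is what lets each copy be treated independently for total domination.

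For the upper bound I would exhibit a total dominating set of size $8$. Since $\gamma_t(C_4)=2$, any two adjacent vertices of a $4$-cycle totally dominate it; choosing such a pair in each of the four copies gives eight vertices whose union totally dominates $FCN(1)$, because every vertex lies in exactly one copy and is already totally dominated within it. Equivalently, applying Theorem~\ref{3} with $\Gamma=\Omega=C_4$, $n=4$, and $\gamma_t(C_4)=\gamma(C_4)=2$ restricts $\gamma_t(FCN(1))$ to the set $\{4,6,8\}$, so it would then suffice to show that the value is the largest of these.

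The heart of the argument is the lower bound $\gamma_t(FCN(1))\ge 8$. Let $D$ be any total dominating set and, for a fixed copy, write its $4$-cycle as $r-a-m-b-r$, where $r$ is the root and $m$ is the vertex opposite $r$. The three non-root vertices $a,m,b$ have all their neighbours inside the copy, so they must be dominated from within. Dominating both $a$ and $b$ forces a vertex of $\{r,m\}$ into $D$, while dominating $m$ forces a vertex of $\{a,b\}$ into $D$; as $\{r,m\}$ and $\{a,b\}$ are disjoint, $D$ must contain at least two vertices of that copy, i.e. $|D\cap V_i|\ge 2$. Summing over the four copies gives $|D|\ge 8$, which together with the construction yields $\gamma_t(FCN(1))=8$.

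I expect the main obstacle to be the lower bound, and within it the step that needs genuine care is the claim that the three non-root vertices of each copy cannot be dominated by any vertex outside the copy. This rests entirely on the observation that the extra edges of $FCN(1)$ touch only the roots, so before asserting the per-copy count $|D\cap V_i|\ge 2$ one must first verify explicitly which vertices receive the four connecting edges and confirm that $a,m,b$ are not among them.
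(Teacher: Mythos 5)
Your proof is correct, and its decisive step is genuinely different from the paper's. For the upper bound you coincide with the paper: the set exhibited there, $\{1111,1110,1010,1011,0111,0110,0011,0010\}$, is exactly an adjacent pair in each of the four copies, i.e.\ your construction. The divergence is in the lower bound. The paper leans entirely on Theorem~\ref{3} (the rooted-product total domination theorem of Cabrera Mart\'{\i}nez and Rodr\'{\i}guez-Vel\'azquez) with $\Gamma=\Omega=C_4$, $n=4$, to restrict $\gamma_t(FCN(1))$ to $\{4,6,8\}$, and then dismisses $4$ and $6$ with a one-line contradiction (``some vertex lies outside $N[D]$ or the induced subgraph has an isolated vertex'') that is asserted rather than verified. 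You instead prove the bound $|D\cap V_i|\ge 2$ per copy directly: since the four connecting edges of $FCN(1)$ are incident only with the roots $0010,0110,1010,1110$, the three non-root vertices $a,m,b$ of each copy $r$--$a$--$m$--$b$--$r$ must be totally dominated from within, and the disjointness of $N(m)=\{a,b\}$ and $N(a)=N(b)=\{r,m\}$ forces two vertices of $D$ into the copy, whence $|D|\ge 8$. This makes Theorem~\ref{3} dispensable (you use it only as an optional cross-check on the upper bound) and, more importantly, supplies precisely the detail the paper's contradiction argument leaves implicit; the paper's route is shorter given the cited machinery, while yours is self-contained, and your closing caution --- that one must verify the connecting edges avoid $a,m,b$ before localizing the count --- is exactly the right point, confirmed by the edge set in the definition of $FCN(l)$ at $l=1$.
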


\begin{proof}
	\begin{sloppypar}
		By Theorem \ref{3}, we get $\gamma_t(FCN(1)) \in \{4, 6, 6, 8\}$. Let $I(FCN(1))$ be the set of all possible sub graphs of $FCN(1)$ with at least one isolated vertex. If $\gamma_t(FCN(1)) \in \{4, 6\}$, then $\exists$ $v \in V(FCN(1))$ but $v \notin N[D_{FCN(1)}]$ or $FCN(1)[D_{FCN(1)}] \subseteq I(FCN(1))$. This is a contradiction to the definition of the total dominating set. Thus $\gamma_t(FCN(1))=8$. The \textbf{TDS} is $D_{FCN(1)}=\{1111, 1110, 1010, 1011, 0111,  0110, 0011, 0010\}$. See Figure \ref{FD7}.
	\end{sloppypar}
\end{proof}

\begin{figure}[H] 
	\centering
	\includegraphics[scale=0.8]{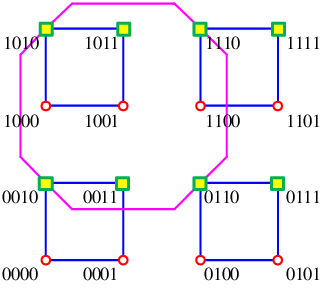}  
	\caption{\textbf{TDS} of $FCN(1)$} \label{FD7}
\end{figure}

\begin{Theorem} \label{12}
	$\gamma_t(FCN(l)) = 4(\gamma_t(FCN(l-1)))-2$ for $l \geq 2$. 
\end{Theorem}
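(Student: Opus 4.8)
The plan is to extract the candidate values from the rooted-product formula of Theorem~\ref{3} and then decide among them by a direct construction together with a counting argument. Since $FCN(l)=C_4\circ_v FCN(l-1)$ has $n=|V(C_4)|=4$ copies of $\Omega:=FCN(l-1)$, and since $\gamma(C_4)=\gamma_t(C_4)=2$, the four entries of Theorem~\ref{3} collapse to the three values $4\gamma_t(FCN(l-1))-4$, $4\gamma_t(FCN(l-1))-2$, and $4\gamma_t(FCN(l-1))$. It therefore suffices to exhibit a total dominating set of size $4\gamma_t(FCN(l-1))-2$, which discards the largest value, and to prove that no total dominating set of size $4\gamma_t(FCN(l-1))-4$ exists, which discards the smallest.

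For the upper bound I would not copy the deletion trick of Theorem~\ref{11}: a short check shows that in a minimum total dominating set the root always possesses a neighbour whose domination collapses once the root is removed, so that device fails for totality. Instead I would keep a full minimum total dominating set inside the two blocks indexed by $01$ and $10$, whose roots $0110(01)^{l-1}$ and $1010(01)^{l-1}$ form a non-adjacent (diagonal) pair of the backbone $C_4$ and are internally totally dominated, and inside the two blocks indexed by $00$ and $11$ I would place a set of size $\gamma_t(FCN(l-1))-1$ totally dominating every non-root vertex of the block. Because the two retained roots occupy one diagonal of the backbone $C_4$, each of the two remaining roots $0010(01)^{l-1}$ and $1110(01)^{l-1}$ is adjacent to \emph{both} retained roots, hence acquires a neighbour in the set and is not left isolated in the induced subgraph. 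This yields a total dominating set of cardinality $2\gamma_t(FCN(l-1))+2(\gamma_t(FCN(l-1))-1)=4\gamma_t(FCN(l-1))-2$. The single structural fact I would verify here is that a block can be trimmed to $\gamma_t(FCN(l-1))-1$ vertices while still totally dominating all of its non-root vertices.

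The lower bound is where the main obstacle lies. For any total dominating set $D$, write $D_i=D\cap V(\Omega_i)$; since every non-root vertex of a block has all its neighbours inside that block, $D_i$ must totally dominate $V(\Omega)\setminus\{v\}$, and a short argument (adjoining one neighbour of the root turns such a set into a total dominating set of $\Omega$) gives $|D_i|\ge\gamma_t(FCN(l-1))-1$. Consequently the total $4\gamma_t(FCN(l-1))-4$ can be met only if all four blocks attain the minimum $\gamma_t(FCN(l-1))-1$ simultaneously. Two general facts now constrain such a minimal block: any reduced set of size $\gamma_t(FCN(l-1))-1$ must omit the entire neighbourhood of the root, for otherwise it would already dominate the root and be a total dominating set of $FCN(l-1)$ below $\gamma_t(FCN(l-1))$; so each root is forced to be dominated across the backbone. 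The crux lemma I would then establish for $FCN(l-1)$ is that a minimal reduced block cannot contain its own root either, i.e.\ adjoining the root $10(01)^{l-1}$ to a reduced set costs a full $\gamma_t(FCN(l-1))$ rather than $\gamma_t(FCN(l-1))-1$; this is the genuinely $FCN$-specific step and the hardest part of the proof. Granting it, when all four blocks are minimal no root belongs to $D$, so no root has any neighbour in $D$ at all, contradicting total domination. Hence $4\gamma_t(FCN(l-1))-4$ is unattainable, and combining the two bounds forces $\gamma_t(FCN(l))=4\gamma_t(FCN(l-1))-2$ for $l\ge 2$. The case $l=1$ sits outside this recursion precisely because $FCN(0)=C_4$ is too small to admit a reduced block, which is exactly why Theorem~\ref{21} records the separate value $\gamma_t(FCN(1))=8$.
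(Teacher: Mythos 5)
Your framework is sound up to its final step: the reduction of Theorem~\ref{3} to the three candidate values, the per-block bound $|D_i|\ge\gamma_t(FCN(l-1))-1$ (obtained by adjoining a neighbour of the root to a reduced set), and the observation that a reduced set of size $\gamma_t(FCN(l-1))-1$ must avoid $N(v)$ are all correct, and indeed more careful than the paper's own argument. But the ``crux lemma'' on which your lower bound hangs --- that a reduced block of size $\gamma_t(FCN(l-1))-1$ cannot contain its own root --- is not merely unproven; it is \emph{false}, already in the first relevant case $l=2$. In $FCN(1)$ the root for the rooted product is $v=1001$, with $N(v)=\{1000,1011\}$, the inner cycles being $ab00$--$ab01$--$ab11$--$ab10$--$ab00$ and the backbone being $0010$--$1010$--$1110$--$0110$--$0010$. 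Take $S=\{1001,0000,0010,0100,0101,1100,1101\}$, of size $7=\gamma_t(FCN(1))-1$. The adjacent pairs $\{0000,0010\}$, $\{0100,0101\}$, $\{1100,1101\}$ dominate each other, the remaining vertices of their inner blocks, and the roots $0010$, $0110$, $1110$; the vertex $1001$ dominates $1000$ and $1011$; and $1010$ is dominated by $0010$ across the backbone. So $S$ totally dominates all fifteen vertices other than $1001$ while containing $1001$, exactly what your lemma forbids. (There is also a smaller slip in your upper bound: for the two remaining roots to ``acquire a neighbour in the set'' the retained diagonal roots must actually belong to $D$, not merely be dominated, so those two blocks' minimum total dominating sets must be chosen through their roots --- but this is moot in view of what follows.)

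The failure of the lemma is fatal in the strongest sense: it disproves the statement itself. Place a copy $ab\Vert S$ in each of the four blocks of $FCN(2)$ and let $D$ be the union, $|D|=28=4\gamma_t(FCN(1))-4$. Every non-root vertex keeps its dominator from $S$ inside its own block, and the four roots $ab\Vert 1001$ all lie in $D$ and form the backbone four-cycle $001001$--$101001$--$111001$--$011001$--$001001$, so each root is dominated by its two neighbouring roots. Hence $D$ is a total dominating set and $\gamma_t(FCN(2))\le 28$, while your own per-block bound gives $\gamma_t(FCN(2))\ge 4\cdot 7=28$. Therefore $\gamma_t(FCN(2))=28=4\gamma_t(FCN(1))-4$, not $30=4\gamma_t(FCN(1))-2$: the \emph{smallest} value in the list of Theorem~\ref{3} is attained, and Theorem~\ref{12} fails at $l=2$. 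For comparison, the paper's proof dismisses the candidate $4(\gamma_t(FCN(l-1))-1)$ with a single unsupported sentence asserting that such a set must leave some vertex undominated or isolated in the induced subgraph --- precisely the assertion your counterexample refutes, so the paper's argument breaks at the same point you flagged as the hardest step. Your instinct was exactly right that this is the genuinely $FCN$-specific crux; the correct conclusion, however, is that it cannot be established, because (at least at this level, and the construction plainly propagates) the recursion is $\gamma_t(FCN(l))=4\gamma_t(FCN(l-1))-4$, with Theorem~\ref{21} remaining the genuine exception since $FCN(0)=C_4$ admits no reduced block of size $\gamma_t(C_4)-1$.
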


\begin{figure}[H] 
	\centering
	\includegraphics[scale=1]{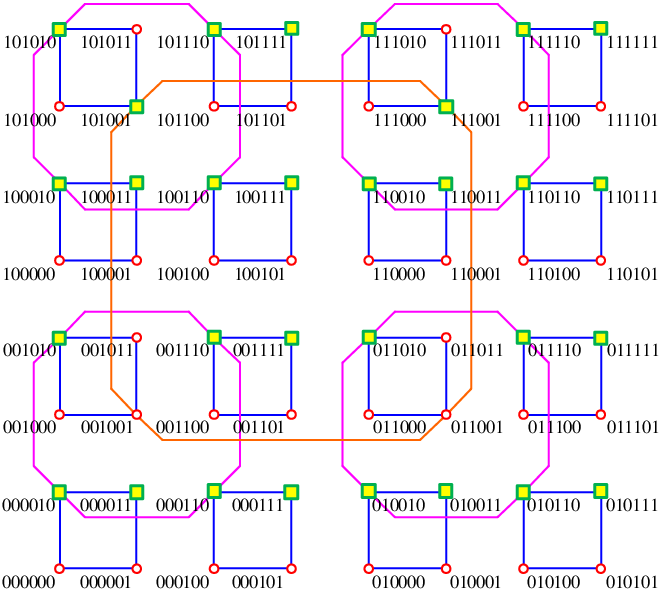}  
	\caption{\textbf{TDS} of $FCN(2)$} \label{FD2}
\end{figure}

\begin{proof}
	\begin{sloppypar}
		By Theorem \ref{3}, we get $\gamma_t(FCN(l)) \in \{4(\gamma_t(FCN(l-1))-1), 4(\gamma_t(FCN(l-1)))-2, 4(\gamma_t(FCN(l-1)))-2, 4(\gamma_t(FCN(l-1)))\}$. Let $I(FCN(l))$ be the set of all possible sub graphs of $FCN(l)$ with at least one isolated vertex. Since they are three minimum values of the \textbf{TDS} we have, $\gamma_t(FCN(l)) \in \{4(\gamma_t(FCN(l-1))-1), 4(\gamma_t(FCN(l-1)))-2, 4(\gamma_t(FCN(l-1)))-2\}$. If $\gamma_t(FCN(l))=4(\gamma_t(FCN(l-1))-1)$ then $\exists$ $v \in V(FCN(l))$ but $v \notin N[D_{FCN(l)}]$ or $FCN(l)[D_{FCN(l)}] \subseteq I(FCN(l))$. This is a contradiction to the definition of the total dominating set. Thus $\gamma_t(FCN(l)) = 4(\gamma_t(FCN(l-1)))-2$ for $l \geq 2$. The \textbf{TDS} is $D_{FCN(l)} = \{ab \ || \   D_{FCN(l-1)} : a,b \in \{0,1\} \} \cup \{ 1010(01)^{l-1}, 1110(01)^{l-1}\} \setminus \{ (1010)(01)^{l-2}(11), (1110)(01)^{l-2}(11), (0110)(01)^{l-2}(11), (0010)(01)^{l-2}(11)\}$ for $l \ge 2$, where $D_{FCN(1)}=\{1111, 1110, 1010, 1011, 0111,  0110, 0011, 0010\}$. See Figure \ref{FD2}.
	\end{sloppypar}
\end{proof}

\begin{Theorem} \label{13}
	$\gamma_c(FCN(l)) = 4(\gamma_c(FCN(l-1))+1)$ for $l \geq 1$.
\end{Theorem}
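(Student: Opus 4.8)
The plan is to exploit the rooted-product description recorded in the construction, $FCN(l) = C_4 \circ_v FCN(l-1)$, so that Theorem \ref{4} applies verbatim with $\Gamma = C_4$, $n = 4$, and $\Omega = FCN(l-1)$. This pins the answer to the two candidates
\[
\gamma_c(FCN(l)) \in \{\, 4\gamma_c(FCN(l-1)),\; 4\gamma_c(FCN(l-1)) + 4 \,\},
\]
so the entire content of the theorem is to certify that the larger value $4(\gamma_c(FCN(l-1))+1)$ is the one that is actually forced. My strategy is therefore the same two-move pattern used in Theorems \ref{11} and \ref{12}: first eliminate the smaller candidate by a connectivity argument, then exhibit an explicit connected dominating set meeting the larger one.

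To discard the value $4\gamma_c(FCN(l-1))$ I would argue structurally about an arbitrary connected dominating set $S$ of $FCN(l)$. Writing $S_i = S \cap V(\Omega_i)$ for the trace of $S$ on the $i$-th copy, note that every non-root vertex of a copy has all its neighbours inside that copy, so each $S_i$ must dominate the interior of $\Omega_i$ and in particular is non-empty. Because the only edges joining distinct copies are the four $C_4$-edges among the roots $0010(01)^{l-1}, 1010(01)^{l-1}, 1110(01)^{l-1}, 0110(01)^{l-1}$, a copy can attach to the remainder of $S$ only through its root. Hence connectedness of $S$ forces every root into $S$ and forces each $S_i$ to be a connected dominating set of the copy that contains the root $v = 10(01)^{l-1}$. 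Thus $|S| \ge 4$ times the least size of a connected dominating set of $FCN(l-1)$ passing through $v$, and the smaller candidate is ruled out precisely when that quantity exceeds $\gamma_c(FCN(l-1))$.

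The main obstacle is exactly this last point: one must show that the root $v = 10(01)^{l-1}$ cannot be incorporated into an optimal connected domination of $FCN(l-1)$ for free, i.e.\ that \emph{every} minimum connected dominating set of $FCN(l-1)$ avoids $v$, equivalently that adjoining $v$ costs one extra vertex. I would attempt to establish this inductively, carrying along the explicit optimal sets as in Theorems \ref{11}--\ref{12} and tracking the role of the bridge-string $10(01)^{l-1}$ in the recursive decomposition. This is the delicate step, since it is a universal claim about \emph{all} optimal sets rather than about a single one; the argument is most likely to break here, and it genuinely requires the fine structure of $FCN$ (for a general root such a statement can fail, so the special geometry of the connector vertices must be used).

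Finally, for the matching upper bound I would present the set obtained by placing in every copy a minimum connected dominating set $D_{FCN(l-1)}$ of $FCN(l-1)$ together with the copy's root, namely
\[
D_{FCN(l)} = \{ab \ || \ D_{FCN(l-1)} : a,b \in \{0,1\}\} \cup \{ab10(01)^{l-1} : a,b \in \{0,1\}\},
\]
of cardinality $4(\gamma_c(FCN(l-1))+1)$. Within each copy the adjoined root attaches to its neighbour in $D_{FCN(l-1)}$ (which exists since $D_{FCN(l-1)}$ dominates), so every copy induces a connected set, while the four roots induce the outer $C_4$; connectedness and domination of $FCN(l)$ follow at once. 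Verifying the base value $\gamma_c(FCN(1))$ directly (cf.\ Figure~\ref{fcn}) would then complete the induction, contingent on resolving the obstacle flagged above.
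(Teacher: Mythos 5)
Your reduction is the right one, and it is in fact sharper than the paper's own proof, which after invoking Theorem \ref{4} dismisses the candidate $4\gamma_c(FCN(l-1))$ with a single unsupported sentence (``the induced set would be disconnected or some vertex undominated''). Your trace argument correctly shows $\gamma_c(FCN(l)) = 4\,\gamma_c^v(FCN(l-1))$, where $\gamma_c^v$ denotes the least size of a connected dominating set containing the root $v = 10(01)^{l-1}$, so everything hinges on the universal claim you flagged: that no $\gamma_c$-set of $FCN(l-1)$ contains $v$. That gap is genuine, and for $l=1$ it cannot be closed, because the claim is false there: $FCN(0) \cong C_4$ has $\gamma_c(C_4)=2$, its minimum connected dominating sets are exactly the four pairs of adjacent vertices, and two of these, $\{10,11\}$ and $\{10,00\}$, contain the root $10$. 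Your own reduction then yields $\gamma_c(FCN(1)) = 4\,\gamma_c^{10}(C_4) = 8$, not $4(\gamma_c(FCN(0))+1)=12$. Concretely, $\{1111,1110,1011,1010,0111,0110,0011,0010\}$ is a connected dominating set of $FCN(1)$ of size $8$: in each copy $ab$ the pair $ab11,\,ab10$ is an edge dominating that copy, and the four roots $ab10$ induce the outer $C_4$. Strikingly, this $8$-element set is precisely the base set $D_{FCN(1)}$ displayed in the paper's own proof, so the paper contradicts its stated formula at $l=1$; the correct statement has range $l \geq 2$, with $\gamma_c(FCN(1)) = 8 = 4\gamma_c(FCN(0))$ as a separate base case (compare Theorem \ref{21}, where the analogous $l=1$ anomaly for total domination is handled separately).

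For $l \geq 2$ your plan does go through, and your trace lemma is exactly the tool needed to discharge the flagged step: applied one level down, every minimum connected dominating set of $FCN(l-1)$ restricts, on each of its four copies, to a minimum connected dominating set through that copy's root, and induction on this characterization shows such sets never contain $10(01)^{l-1}$ (for $l=2$: the traces on the copies of $C_4$ are $\{10,00\}$ or $\{10,11\}$ inside each copy, and $1001$ occupies the never-chosen $01$ position), giving $\gamma_c^v = \gamma_c + 1$ and hence the recurrence. So your skeleton coincides with the paper's --- the Theorem \ref{4} dichotomy, elimination of the smaller value, and the same explicit witness set --- but where the paper merely asserts the elimination, you correctly identified it as the entire content of the theorem; carrying your flagged obstacle to a resolution refutes the statement at $l=1$ rather than proving it.
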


\begin{proof}
	\begin{sloppypar}
		By Theorem \ref{4}, $\gamma_c(FCN(l)) \in \{4(\gamma_c(FCN(l-1))), 4(\gamma_c(FCN(l-1))+1) \}$. If $\gamma_c(FCN(l)) = 4(\gamma_c(FCN(l-1)))$ then $FCN(l)[D_{FCN(l)}]$ will be an induced disconnected graph or $\exists$ $v \in V(FCN(l))$ but not in $N[D_{FCN(l)}]$. A contradiction to the definition of the connected dominating set. Thus $\gamma_c(FCN(l)) = 4(\gamma_c(FCN(l-1))+1)$ for $l \geq 1$. The \textbf{CDS} is $D_{FCN(l)} = \{ab \ || \ D_{FCN(l-1)} : a,b \in \{0,1\}\} \cup \{ 0010(01)^{l-1}, 0110(01)^{l-1}, 1010(01)^{l-1}, 1110(01)^{l-1} \}$ for $l \ge 1$, where $D_{FCN(1)}=\{1111, 1110, 1011, 1010, 0111, 0110, 0011, 0010\}$. See Figure \ref{FD3}.
	\end{sloppypar}
\end{proof}

\begin{Theorem}
	$\gamma_{\times 2}(FCN(1)) = 12$.
\end{Theorem}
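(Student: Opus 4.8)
My plan is to exploit the rooted-product description of the network that the paper has already established, namely $FCN(l) = C_4 \circ_v FCN(l-1)$. Specializing to $l=1$ and using $FCN(0) = C_4$, this gives $FCN(1) = C_4 \circ_v C_4$, a rooted product in which both factors are $C_4$. The key observation is that the second factor here is exactly $C_4$, so Theorem \ref{6} applies verbatim: with $\Gamma = C_4$ (which is isolate-free) and $\Omega = C_4$, I get $\gamma_{\times 2}(C_4 \circ_v C_4) = n\,\gamma_{\times 2}(C_4)$, where $n = |V(C_4)| = 4$. Thus the whole computation reduces to determining the single quantity $\gamma_{\times 2}(C_4)$ and multiplying by $4$.

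The remaining task is therefore to show $\gamma_{\times 2}(C_4) = 3$, which I would split into an upper and a lower bound. For the upper bound I would exhibit a concrete double dominating set of size three, e.g.\ three consecutive vertices of the cycle $C_4 = v_1 v_2 v_3 v_4$, and check directly that $|N[v]\cap D|\ge 2$ for each of the four vertices (the unique vertex outside $D$ sees both of its neighbours inside $D$, and each vertex inside $D$ sees itself together with at least one adjacent member of $D$). For the lower bound I would rule out $|D|=2$ by a short case analysis: since each vertex of $C_4$ has degree two, any vertex omitted from $D$ must have \emph{both} of its neighbours in $D$, and any vertex in $D$ must have at least one neighbour in $D$; checking the two possible two-element subsets up to the symmetry of $C_4$ (an adjacent pair and an antipodal pair) shows each fails the condition at some vertex. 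Hence $\gamma_{\times 2}(C_4) = 3$.

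Combining the two pieces yields $\gamma_{\times 2}(FCN(1)) = \gamma_{\times 2}(C_4 \circ_v C_4) = 4 \cdot \gamma_{\times 2}(C_4) = 4 \cdot 3 = 12$, as claimed. I do not anticipate a genuine obstacle here: the only places needing care are confirming that the hypothesis of Theorem \ref{6} is met (which is immediate, since $C_4$ is connected and isolate-free) and correctly identifying the parameter $n=4$ as the number of copies, i.e.\ the order of the \emph{outer} factor $\Gamma = C_4$ rather than of $\Omega$. If one preferred to avoid the specialized Theorem \ref{6}, an alternative route is to invoke the general value list of Theorem \ref{5} and eliminate the smaller candidates by the double-domination constraint, but the direct application of Theorem \ref{6} is the cleanest and is the step I expect to carry the proof.
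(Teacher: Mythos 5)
Your proposal matches the paper's own proof: the paper likewise notes that $FCN(0)\cong C_4$ with $\gamma_{\times 2}(C_4)=3$ and then applies Theorem~\ref{6} to the rooted product $FCN(1)=C_4\circ_v FCN(0)$ to obtain $4\cdot 3=12$. The only difference is that you verify $\gamma_{\times 2}(C_4)=3$ by an explicit upper/lower bound argument where the paper simply declares it obvious, so your write-up is the same proof with one step filled in.
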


\begin{proof}
	\begin{sloppypar}
		It is obvious that for $FCN(0)$, the double domination number is 3 since it is isomorphic to $C_4$. By Theorem \ref{6}, we get $\gamma_{\times 2}(FCN(1)) = 4(\gamma_{\times 2}(FCN(0))) = 12$. The \textbf{DDS} is $D_{FCN(1)}=\{1111, 1110, 1101, 1011, 1010, 1001, 0111, 0110, 0101, 0011, 0010, 0001 \}$. See Figure \ref{FDD3}.
	\end{sloppypar}
\end{proof}

\begin{Theorem}
	$\gamma_{\times 2}(FCN(l)) = 4(\gamma_{\times 2}(FCN(l-1))-1)$ for $l \geq 2$.
\end{Theorem}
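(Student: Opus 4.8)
The plan is to instantiate Theorem~\ref{5} with $\Gamma = C_4$, $n = 4$ and $\Omega = FCN(l-1)$, using the representation $FCN(l) = C_4 \circ_v FCN(l-1)$. First I would record the $C_4$-invariants $\gamma(C_4) = \gamma_2(C_4) = 2$ and $\gamma_{\times 2}(C_4) = \gamma_{q\times 2}(C_4) = 3$ and substitute them into the six expressions of Theorem~\ref{5}. Writing $X := \gamma_{\times 2}(FCN(l-1))$, the candidate list collapses to
\[
\gamma_{\times 2}(FCN(l)) \in \{4X,\ 4X-1,\ 4X-2,\ 4X-4,\ 4X-5\}.
\]
Since the target value $4(X-1) = 4X-4$ is the second smallest candidate, the work divides into producing a double dominating set of size $4X-4$ (which discards $4X$, $4X-1$ and $4X-2$) and excluding the single smaller candidate $4X-5$.

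For the upper bound I would construct an explicit \textbf{DDS} recursively, in the style of the sets exhibited in Theorems~\ref{12} and~\ref{13}. The building block is a set $T \subseteq V(FCN(l-1))$ of size $X-1$ that contains the root $v$, has no neighbour of $v$ inside it, and double dominates every vertex of $FCN(l-1)$ except $v$ itself. Taking the four prefixed copies $\{ab \mathbin\Vert T : a,b \in \{0,1\}\}$ gives a set $D_{FCN(l)}$ of cardinality $4(X-1)=4X-4$ in which every non-root vertex is already double dominated inside its own copy, while each of the four roots, carrying only itself as an intra-copy dominator, receives two further dominators through the $C_4$-edges of the outer cycle joining the roots (all four roots lie in $D_{FCN(l)}$). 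Verifying that such a block $T$ exists for $FCN(l-1)$ with $l \ge 2$, and that it fails for $\Omega = C_4$ at $l=1$ (consistent with the non-reduction in Theorem~\ref{6}), is the structural content of this half.

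The step I expect to be the main obstacle is the matching lower bound, i.e.\ ruling out $4X-5$. Let $D$ be any \textbf{DDS} of $FCN(l)$ and let $\Omega_1,\dots,\Omega_4$ be the four vertex-disjoint copies with roots $r_1,\dots,r_4$. Every non-root vertex $u$ of $\Omega_i$ satisfies $N_{FCN(l)}[u] \subseteq V(\Omega_i)$, so $S_i := D \cap V(\Omega_i)$ must double dominate all non-root vertices of $\Omega_i$ on its own. I would then prove $|S_i| \ge X-1$ for each $i$ by a completion argument keyed to $c := |N_{\Omega_i}[r_i] \cap S_i|$: if $c \ge 2$ then $S_i$ is already a \textbf{DDS} of $\Omega_i \cong FCN(l-1)$ and $|S_i| \ge X$; if $c = 1$ then adjoining one further vertex of $N_{\Omega_i}[r_i]$ makes $S_i$ a \textbf{DDS}, whence $|S_i| \ge X-1$. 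The residual case $c = 0$, meaning $S_i$ avoids the entire closed neighbourhood of the root, is the crux: the self-similar edge structure of $FCN(l-1)$ gives the root $v$ a neighbour $w$ of degree two with $N(w) = \{v, w'\}$, and if $S_i$ avoided $N[v]$ then $N[w] \cap S_i \subseteq \{w'\}$ would have size at most one, so the non-root vertex $w$ could not be double dominated -- a contradiction. Hence $|S_i| \ge X-1$ for every $i$, and summing over the four disjoint copies yields $|D| = \sum_{i=1}^{4}|S_i| \ge 4(X-1) = 4X-4$, excluding $4X-5$.

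Combining the two bounds gives $\gamma_{\times 2}(FCN(l)) = 4X-4 = 4(\gamma_{\times 2}(FCN(l-1))-1)$ for $l \ge 2$, with the set of the second paragraph as a minimum \textbf{DDS}. The single delicate point on which both halves rest is the local behaviour of $FCN(l-1)$ at its root: for the lower bound I would confirm, directly from the concatenation-based definition of the edge set, that the root always has a degree-two neighbour, and for the upper bound that the block $T$ saving exactly one vertex per copy can be realised. This root-neighbourhood feature -- present precisely because $\Omega = FCN(l-1)$ is richer than $C_4$ when $l \ge 2$ -- is what fixes the value at $4X-4$ rather than $4X-5$.
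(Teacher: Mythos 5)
Your proposal is correct and follows the same skeleton as the paper's proof---instantiate Theorem~\ref{5} with $\Gamma=C_4$, $n=4$, $\Omega=FCN(l-1)$, exhibit a \textbf{DDS} of size $4X-4$, and exclude $4X-5$---but it genuinely strengthens the one step the paper leaves unproved. The paper dismisses the candidate $4X-5$ with the bare assertion that some vertex would then satisfy $|N[v]\cap D_{FCN(l)}|\leq 1$; your per-copy counting argument actually proves this: since only the four roots acquire new edges under the rooted product, $S_i=D\cap V(\Omega_i)$ must double dominate every non-root vertex of its copy, the case analysis on $c=|N_{\Omega_i}[r_i]\cap S_i|$ yields $|S_i|\geq X-1$, and the case $c=0$ is indeed impossible because the root $10(01)^{l-1}$ of $FCN(l-1)$ has the degree-two neighbours $10(01)^{l-2}00$ and $10(01)^{l-2}11$ (their suffixes never match the root pattern $10(01)^{k}$ at any recursion depth, so they descend to degree-two vertices of the innermost $C_4$), giving $\gamma_{\times 2}(FCN(l))\geq 4X-4$ outright and making the membership list of Theorem~\ref{5} redundant for the lower bound. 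Your candidate list is in fact slightly more accurate than the paper's: with $\gamma_{q\times 2}(C_4)=3$ the second entry is $4X-1$, where the paper lists $4X$ twice; this is immaterial to the conclusion. Your one deferred step, the existence of the block $T$, is precisely what the paper's exhibited set supplies: its $D_{FCN(l)}$ consists of four prefixed copies of a minimum \textbf{DDS} of $FCN(l-1)$ with the root-neighbour $10(01)^{l-2}11$ deleted from each copy, and your stated requirements for $T$ check out at the base case (in $D_{FCN(1)}$, deleting $1011$ leaves a set of size $11$ containing the root $1001$, avoiding its neighbours $1000$ and $1011$, and doubly dominating every vertex except $1001$), so this half closes exactly as in the paper---and with more explicit justification, since the paper never verifies its set beyond a figure reference.
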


\begin{proof}
	\begin{sloppypar}
		By Theorem \ref{5}, we get $\gamma_{\times 2}(FCN(l)) \in \{4\gamma_{\times 2}(FCN(l-1)), 4(\gamma_{\times 2}(FCN(l-1))), 4(\gamma_{\times 2}(FCN(l-1)))-2, 4(\gamma_{\times 2}(FCN(l-1)))-2, 4(\gamma_{\times 2}(FCN(l-1))-1), 4(\gamma_{\times 2}(FCN(l-1)))-5\}$. Here, there are two minimum values for the \textbf{DDS}. That is $\gamma_{\times 2}(FCN(l)) \in \{4(\gamma_{\times 2}(FCN(l-1))-1), 4(\gamma_{\times 2}(FCN(l-1)))-5\}$. If $\gamma_{\times 2}(FCN(l)) = 4(\gamma_{\times 2}(FCN(l-1)))-5$ then $\exists$ $v \in V(FCN(l))$ where, $|N[v]\cap D_{FCN(l)}| \leq 1$. Thus $\gamma_{\times 2}(FCN(l)) = 4(\gamma_{\times 2}(FCN(l-1))-1)$ for $l \geq 2$. The \textbf{DDS} is $D_{FCN(l)} = \{ ab \ || \ D_{FCN(l-1)} : a,b \in \{0,1\} \} \setminus \{(1110)(01)^{l-2}(11), (1010)(01)^{l-2}(11), (0110)(01)^{l-2}(11), (0010)(01)^{l-2}(11)\} $ for $l \ge 2$, where $D_{FCN(1)}= \{1111, 1110, 1101, 1011, 1010, 1001, 0111, 0110, 0101, 0011, 0010, 0001 \}$. See Figure \ref{FD4}.
	\end{sloppypar}
\end{proof}

\begin{figure}[H] 
	\centering
	\includegraphics[scale=1]{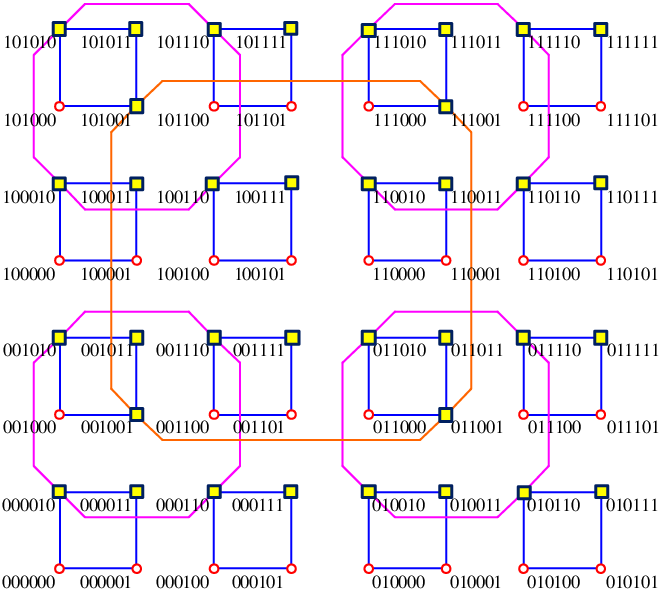}  
	\caption{\textbf{CDS} of $FCN(2)$} \label{FD3}
\end{figure}

\begin{figure}[H] 
	\centering
	\includegraphics[scale=1]{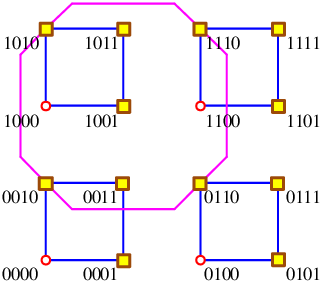}  
	\caption{\textbf{DDS} of $FCN(1)$} \label{FDD3}
\end{figure}

\begin{Theorem}
	$\gamma_{2}(FCN(l)) = 4(\gamma_2(FCN(l-1)))$ for $l \geq 1$.
\end{Theorem}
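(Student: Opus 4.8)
The plan is to realize $FCN(l)$ as the rooted product $C_4 \circ_v FCN(l-1)$ with outer factor $\Gamma = C_4$ (so $n = 4$) and inner factor $\Omega = FCN(l-1)$, and then to read off the value of $\gamma_2$ from the two rooted-product results already quoted. First I would record the two invariants of the outer cycle, $\gamma(C_4) = 2$ and $\gamma_2(C_4) = 2$. Substituting these into Theorem~\ref{7} collapses its three candidate expressions to just two distinct values, $\gamma_2(FCN(l)) \in \{4\gamma_2(FCN(l-1)) - 2,\ 4\gamma_2(FCN(l-1))\}$. The larger value is immediately attainable: placing an optimal $2$-dominating set of $FCN(l-1)$ inside each of the four copies yields a $2$-dominating set of $FCN(l)$ of size $4\gamma_2(FCN(l-1))$, because every vertex, the four roots included, is already $2$-dominated within its own copy and the extra inter-copy edges of the outer $C_4$ only help. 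Hence only the smaller value $4\gamma_2(FCN(l-1)) - 2$ must be excluded.

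To exclude it I would invoke Theorem~\ref{8}, which is tailored to decide exactly when the rooted product attains the value $n\gamma_2(\Omega)$. Its hypothesis $\gamma_2(\Gamma) < n$ holds here since $\gamma_2(C_4) = 2 < 4$, and $C_4$ is isolate-free, so Theorem~\ref{8} gives the equivalence: $\gamma_2(FCN(l)) = 4\gamma_2(FCN(l-1))$ if and only if $\gamma_2(FCN(l-1) - v) \geq \gamma_2(FCN(l-1))$, where $v$ is the chosen root. Thus the whole theorem reduces to the single vertex-deletion inequality $\gamma_2(FCN(l-1) - v) \geq \gamma_2(FCN(l-1))$.

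For the base case $l = 1$ this is transparent: $FCN(0) \cong C_4$, deleting the root gives $P_3$, and $\gamma_2(P_3) = 2 = \gamma_2(C_4)$, so the inequality holds and $\gamma_2(FCN(1)) = 4\gamma_2(FCN(0)) = 8$. For the inductive step I would argue through the following sufficient condition. A $2$-dominating set $D'$ of $FCN(l-1) - v$ automatically $2$-dominates every vertex of $FCN(l-1)$ other than $v$, because deleting $v$ only removes neighbors and so cannot lower any surviving vertex's count of dominators; consequently, if some minimum $2$-dominating set $D'$ of $FCN(l-1) - v$ contains at least two neighbors of $v$, then $D'$ also $2$-dominates $v$, whence $\gamma_2(FCN(l-1)) \leq |D'| = \gamma_2(FCN(l-1) - v)$, which is exactly the inequality sought. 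So the task becomes: show that some minimum $2$-dominating set of $FCN(l-1) - v$ picks up two of the root's neighbors.

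The hard part will be this last structural claim, since the root of $FCN(l-1)$ is a high-degree vertex whose degree grows linearly with $l$ and whose neighbors do not become leaves after deletion, so the easy forcing argument is unavailable. I would attack it through the recursive decomposition $FCN(l-1) = C_4 \circ_v FCN(l-2)$: deleting the root $R_1$ leaves three intact copies of $FCN(l-2)$ together with a fourth copy missing its own root, joined along the path $R_2 - R_3 - R_4$ left from the inner $4$-cycle. I would track how a minimum $2$-dominating set must behave on the reduced copy and on the two surviving inner roots $R_2, R_4 \in N(v)$, aiming to show either that $R_2$ and $R_4$ are forced into every optimal solution or that any optimal solution can be modified, without increasing its size, so as to contain two neighbors of $v$. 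Carrying through the induction the stronger hypothesis that $FCN(m)$ admits a minimum $2$-dominating set containing all four of its top-level roots would make this bookkeeping go through and is the auxiliary statement I would prove alongside the main equality.
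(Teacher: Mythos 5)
Your reduction is exactly the paper's route: realize $FCN(l)=C_4\circ_v FCN(l-1)$, use $\gamma(C_4)=\gamma_2(C_4)=2$ to collapse Theorem~\ref{7} to the two candidates $4\gamma_2(FCN(l-1))-2$ and $4\gamma_2(FCN(l-1))$, and appeal to Theorem~\ref{8} to decide between them. (The paper's own proof simply cites Theorems~\ref{7} and~\ref{8} and exhibits a set, so you correctly identified the hypothesis $\gamma_2(FCN(l-1)-v)\geq\gamma_2(FCN(l-1))$ that the citation leaves unverified; your base case and your sufficient condition --- that a minimum $2$-dominating set of $FCN(l-1)-v$ containing two neighbors of $v$ already $2$-dominates $v$ --- are both correct.) However, as submitted your proof has a genuine gap: for $l\geq 2$ the vertex-deletion inequality is never established. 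You only sketch a program (a strengthened induction asserting a minimum $2$-dominating set containing all four top-level roots) and explicitly defer the bookkeeping. Worse, that program rests on a structural misreading: the root is \emph{not} ``a high-degree vertex whose degree grows linearly with $l$.'' The network has constant maximum degree $4$ (a property the paper emphasizes), and the root $v=10(01)^{l-1}$ of $FCN(l-1)$ is not one of the top-level glue vertices $ab10(01)^{l-2}$; it is a degree-$2$ vertex of an innermost $4$-cycle. Consequently your picture of $FCN(l-1)-v$ as three intact copies joined along a path $R_2-R_3-R_4$ is not what the deleted graph looks like.

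The irony is that the ``easy forcing argument'' you dismissed as unavailable is available and closes the gap in a few lines. The innermost $4$-cycle through $v$ is $x,v,y,w$ with $x=10(01)^{l-2}00$, $y=10(01)^{l-2}11$, $w=10(01)^{l-2}10$; a check of the construction shows that $x$ and $y$ (ending in $00$ and $11$) are endpoints of no added inter-copy edge at any level of the recursion, so $d(x)=d(y)=2$ with $N(x)=N(y)=\{v,w\}$. Hence in $FCN(l-1)-v$ both $x$ and $y$ are leaves, and a degree-$1$ vertex belongs to every $2$-dominating set, since it cannot have two neighbors in it. So every $2$-dominating set $D'$ of $FCN(l-1)-v$ contains both neighbors $x,y$ of $v$, your sufficient condition holds automatically, and $\gamma_2(FCN(l-1))\leq |D'|=\gamma_2(FCN(l-1)-v)$ for all $l\geq 2$; the case $l=1$ is your $P_3$ computation, which is the same forcing argument. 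With this observation no induction is needed at all, and your outline completes to a full proof --- indeed one more complete than the paper's, which asserts the conclusion from Theorems~\ref{7} and~\ref{8} without verifying the deletion hypothesis.
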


\begin{proof}
	\begin{sloppypar}
		From Theorem \ref{7} and Theorem \ref{8}, we get $\gamma_{2}(FCN(l)) = 4(\gamma_2(FCN(l-1)))$ for $l \geq 1$. The \textbf{2DS} is $D_{FCN(l)}=\{N(s_1 \ldots s_{2l}01): s_1, s_2, \ldots, s_{2l} \in \{0,1\}\}$ for $l \ge 1$. See Figure \ref{FD5}.
	\end{sloppypar}
\end{proof}

\begin{figure}[H] 
	\centering
	\includegraphics[scale=1]{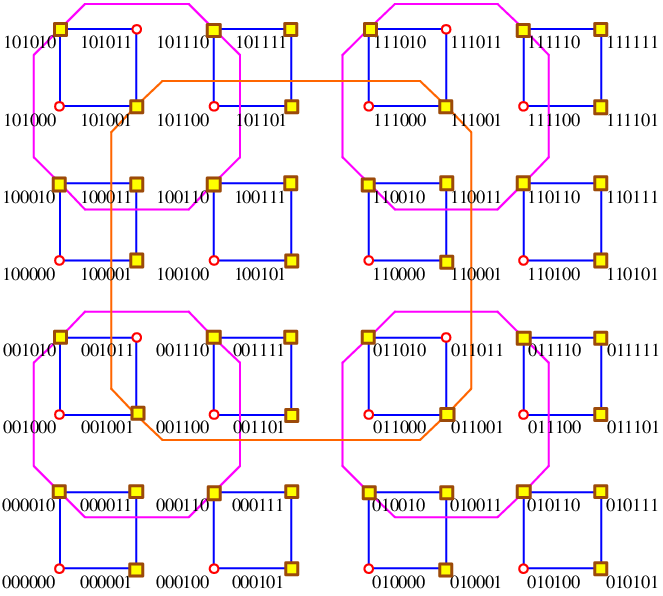}  
	\caption{\textbf{DDS} of $FCN(2)$} \label{FD4}
\end{figure}

\begin{Theorem} \label{23}
	$\gamma_r(FCN(l)) = 4(\gamma_2(FCN(l-1)))$ for $l \geq 1$.
\end{Theorem}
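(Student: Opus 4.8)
The plan is to prove the sharper identity $\gamma_r(FCN(l)) = \gamma_2(FCN(l)) = 2\cdot 4^{l}$ and then read off the statement from the preceding theorem, which gives $\gamma_2(FCN(l)) = 4\gamma_2(FCN(l-1))$ (and $4\gamma_2(FCN(l-1)) = 2\cdot 4^{l}$, using $\gamma_2(FCN(0)) = \gamma_2(C_4) = 2$). Note first that Theorem~\ref{1} is not enough on its own: solving the recurrence of Theorem~\ref{11} with $\gamma(FCN(0))=2$ gives $\gamma(FCN(l)) = (4^{l+1}+2)/3$, while $\dim(FCN(l)) = 4^{l}$ by Theorem~\ref{mmdfcn}, so the target $2\cdot 4^{l}$ lies \emph{strictly} between $\max\{\gamma,\dim\}$ and $\gamma+\dim$. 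I therefore argue directly from the recursive $C_4$-structure of $FCN(l)$.

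For the lower bound I would partition $V(FCN(l))$ into its $4^{l}$ bottom four-cycles $Q_p = \{p00,\,p01,\,p11,\,p10\}$, one per prefix $p\in\{0,1\}^{2l}$, with edges $p00\,p01$, $p01\,p11$, $p11\,p10$, $p10\,p00$. The structural fact to isolate is that the root vertices introduced by the rooted products are exactly the strings of the form $\cdots 10(01)^{k}$, which end in $01$ or $10$; hence a vertex ending in $00$ or $11$ is never a connector and has degree $2$, so that $N[p00] = \{p00,p01,p10\}$ and $N[p11] = \{p11,p01,p10\}$ lie inside $Q_p$. In particular $\{p00,p11\}$ is a false-twin pair, and $p00,p11$ can only be dominated from within $Q_p$. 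Now let $D$ be any \textbf{RDS}. Resolving forces $D\cap\{p00,p11\}\neq\emptyset$, while dominating $p00$ and $p11$ forces $D$ to meet both $\{p00,p01,p10\}$ and $\{p11,p01,p10\}$; a one-line check on $|D\cap Q_p|=1$ shows these are incompatible (the resolving choice $p00$ does not dominate $p11$, and symmetrically, while choosing $p01$ or $p10$ violates resolving). Thus $|D\cap Q_p|\ge 2$ for every $p$, and summing over the $4^{l}$ cells gives $|D|\ge 2\cdot 4^{l}$.

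For the upper bound I would take $D^{*}$ to be the union of these twin pairs, i.e.\ all vertices whose last block is $00$ or $11$; this is exactly the $2$-dominating set of the preceding theorem, so $|D^{*}| = 2\cdot 4^{l}$ and $D^{*}$ dominates. It remains to see that $D^{*}$ resolves. Since $\dim(FCN(l)) = 4^{l}$ equals the number of twin pairs $\{p00,p11\}$, and every resolving set must contain at least one vertex of each such pair, any basis uses exactly one vertex per pair and nothing else; as $D^{*}$ contains \emph{both} vertices of every pair, it contains a basis, and any superset of a resolving set is again resolving. Hence $D^{*}$ is a resolving dominating set and $\gamma_r(FCN(l))\le 2\cdot 4^{l}$. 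Combining the two bounds yields $\gamma_r(FCN(l)) = 2\cdot 4^{l} = 4\gamma_2(FCN(l-1))$.

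The main obstacle is the local counting in the lower bound for those bottom four-cycles that carry two connectors at once (for instance, in $FCN(2)$ the cycle $\{001000,001001,001011,001010\}$, whose $01$- and $10$-vertices are both roots): there one must still confirm that the twin pair is the two degree-$2$ endpoints $p00,p11$ and that the combined resolve-and-dominate constraint continues to force two chosen vertices, even though the middle vertices now reach outside $Q_p$. Pinning down cleanly that the root vertices are precisely the $01$/$10$-ending strings --- so that every $00$/$11$-ending vertex has degree $2$ --- is the one spot where the recursive definition of $FCN$ has to be used carefully; once that is established, both the partition argument and the ``$D^{*}$ contains a basis'' argument go through uniformly in $l$.
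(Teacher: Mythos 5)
Your proof is correct, and it rests on the same two pillars as the paper's own argument --- a ``two vertices per bottom four-cycle'' count for the lower bound and the set of all $00/11$-ending vertices for the upper bound --- but your organization is genuinely different and substantially more rigorous. The paper opens with the interval of Theorem~\ref{1} combined with Theorems~\ref{mmdfcn} and~\ref{11}, rules out only the two \emph{endpoint} values $4^l$ and $4\gamma(FCN(l-1))-2$, and then simply asserts that every disjoint $C_4$ needs two vertices, one from its twin set; your observation that $2\cdot 4^{l}$ lies \emph{strictly} between $\max\{\gamma,\dim\}$ and $\gamma+\dim$ (since $\gamma(FCN(l))=(4^{l+1}+2)/3$) pinpoints exactly why that preamble cannot by itself determine the value, and your partition argument over the $4^{l}$ cells $Q_p$ --- grounded in the structural fact that connectors are precisely the $10(01)^{k}$-suffixed strings, so that $N[p00]$ and $N[p11]$ stay inside $Q_p$ --- supplies the justification the paper leaves implicit. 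On the upper-bound side the paper exhibits the set $\{N(s_1\ldots s_{2l}01)\}$ and points to a figure without verifying resolvability (and read literally as a union of open neighborhoods, for $l\geq 2$ that set also absorbs connector neighbors ending in $01$ and overshoots $2\cdot 4^{l}$, a notational slip your explicit $D^{*}$ avoids); your basis-containment argument --- any basis has size $4^{l}$ by Theorem~\ref{mmdfcn}, must meet each of the $4^{l}$ disjoint twin pairs $\{p00,p11\}$, hence consists of exactly one vertex per pair and lies inside $D^{*}$, so the superset $D^{*}$ resolves --- is a clean substitute that uses the known metric dimension as a black box. The caveat you flag about cells carrying two connectors is indeed harmless for precisely the reason you give: the counting uses only the degree-$2$ status of $p00$ and $p11$, never the degrees of $p01$ and $p10$.
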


\begin{proof}
	\begin{sloppypar}
		From Theorem \ref{mmdfcn}, Theorem \ref{1} and Theorem \ref{11}, we have $\max\{4^l, 4(\gamma(FCN(l-1)))-2\} \leq \gamma_r(FCN(l)) \leq 4^l+4(\gamma(FCN(l-1)))-2$. Suppose $\gamma_r(FCN(l))=4^l$, then $\exists$ $v \in V(FCN(l))$ which is a two degree twin vertex, such that $v \notin N[D_{FCN(l)}]$. This is a contradiction to the definition of resolving domination. Suppose $\gamma_r(FCN(l)) = 4(\gamma(FCN(l-1)))-2$, by Theorem \ref{22} and \ref{11}, the vertices of the set does not contain twin vertices which are necessary for resolving the graph. This is a contradiction to the definition of the resolving domination. Thus the set must contain all twin vertices except one from each twin set and must dominate the vertices of $FCN(l)$. Thus, in every disjoint $C_4$ of $FCN(l)$, we need at least two vertices and one must be from the twin set. This implies that $\gamma_r(FCN(l)) = 4(\gamma_2(FCN(l-1)))$ for $l \geq 1$. The \textbf{RDS} is $D_{FCN(l)}=\{N(s_1 \ldots s_{2l}01): s_1, s_2, \ldots, s_{2l} \in \{0,1\}\}$ for $l \ge 1$. See Figure \ref{FD5}.
	\end{sloppypar}
\end{proof}

\begin{Corollary}
	$\gamma_{ri}(FCN(l)) = 4(\gamma_2(FCN(l-1)))$ for $l \geq 1$.
\end{Corollary}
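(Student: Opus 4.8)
The plan is to sandwich $\gamma_{ri}(FCN(l))$ between two bounds that both equal $4\gamma_2(FCN(l-1))$. For the lower bound I would use the elementary observation that every resolving independent dominating set is, in particular, a resolving dominating set, so that $\gamma_{ri}(FCN(l)) \ge \gamma_r(FCN(l))$; invoking Theorem~\ref{23} this gives $\gamma_{ri}(FCN(l)) \ge 4\gamma_2(FCN(l-1))$ at once. One may also reach a lower bound through Theorem~\ref{18} fed with Corollary~\ref{19} and Theorem~\ref{mmdfcn}, namely $\gamma_{ri}(FCN(l)) \ge \max\{4\gamma_i(FCN(l-1))-2,\,4^{l}\}$, but the route through Theorem~\ref{23} is sharper and lands exactly on the target value.

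For the upper bound I would exhibit an explicit independent resolving dominating set of cardinality $4\gamma_2(FCN(l-1))$. The natural candidate, mirroring the set used in Theorem~\ref{23}, is the collection $D_{FCN(l)}$ of all vertices whose string terminates in $00$ or $11$, equivalently the union of the in-copy neighbourhoods of the vertices ending in $01$. Two facts make $D_{FCN(l)}$ work almost immediately: it is exactly the family of twin vertices of $FCN(l)$ governed by Theorem~\ref{22} (two per bottom $C_4$), so it contains a basis, one vertex from each twin pair, and therefore resolves; and every vertex of $FCN(l)$ outside $D_{FCN(l)}$ ends in $01$ or $10$ and lies on a bottom $C_4$ whose two twin vertices belong to $D_{FCN(l)}$, so $D_{FCN(l)}$ dominates, indeed $2$-dominates. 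Its cardinality is $2\cdot 4^{l}=4\gamma_2(FCN(l-1))$.

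The step I expect to be the main obstacle is verifying that $D_{FCN(l)}$ is independent, since this is the single property separating a resolving dominating set from a resolving independent dominating set. Here one must account for two kinds of edges. The edges internal to a bottom $C_4$ always join a twin vertex, ending in $00$ or $11$, to a non-twin vertex, ending in $01$ or $10$, so they contribute no edge inside $D_{FCN(l)}$; in particular the two twins $w00$ and $w11$ sit opposite on their $4$-cycle and are non-adjacent. The delicate case is the connecting edges introduced at each level of the recursion: one checks that all their endpoints are attachment vertices, whose strings end in $10$ or $01$, and hence lie outside $D_{FCN(l)}$. Care is needed for $l\ge 2$, where some attachment vertices end in $01$; since they terminate in $01$ rather than $00$ or $11$, they remain excluded from $D_{FCN(l)}$, so the connecting edges again contribute no edge inside the set. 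Granting independence, $D_{FCN(l)}$ is a resolving independent dominating set of size $4\gamma_2(FCN(l-1))$, whence $\gamma_{ri}(FCN(l)) \le 4\gamma_2(FCN(l-1))$; combined with the lower bound this yields $\gamma_{ri}(FCN(l)) = 4\gamma_2(FCN(l-1))$ for $l\ge 1$.
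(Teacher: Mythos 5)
Your proposal is correct, and its upper-bound half is the paper's own construction: the set of all vertices ending in $00$ or $11$ is exactly the \textbf{RIDS} $D_{FCN(l)}$ the paper exhibits, and your twin-based resolving argument (Theorem \ref{22} plus $\dim(FCN(l))=4^l$ from Theorem \ref{mmdfcn}) is the same mechanism the paper invokes by saying ``the same argument constructed in Theorem \ref{23} holds here.'' Where you genuinely diverge is the lower bound: the paper sandwiches $\gamma_{ri}$ via Theorem \ref{18} (fed with Corollary \ref{19} and Theorem \ref{mmdfcn}) and then re-runs the twin/domination argument of Theorem \ref{23}, whereas you use the monotonicity $\gamma_{ri}(FCN(l)) \geq \gamma_r(FCN(l))$ and quote Theorem \ref{23} directly, which lands on $4\gamma_2(FCN(l-1))$ in one step and is cleaner -- the paper's bounds from Theorem \ref{18} are looser and do not by themselves pin down the value. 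Two further points in your favour: you actually verify independence, including the delicate check that connecting-edge endpoints end in $10$ or, for deeper levels with $l \geq 2$, in $01$, and hence avoid $D_{FCN(l)}$ -- the paper merely asserts ``the set is independent and resolving''; and your reading of the paper's set as the union of \emph{in-copy} neighbourhoods of the $01$-vertices quietly repairs an imprecision, since for $l \geq 2$ the literal open neighbourhoods $N(s_1\ldots s_{2l}01)$ would also sweep in the top-level attachment vertices (which are themselves adjacent to each other and to members of the set), breaking both the cardinality count $2\cdot 4^l$ and independence. The only step you lean on without full detail is that any transversal of the twin pairs resolves (not merely that every basis is such a transversal); this follows because swapping a twin for its partner preserves distance vectors to all other vertices, and in any case the paper glosses this identically.
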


\begin{proof}
	\begin{sloppypar}
		From Theorem \ref{mmdfcn}, Theorem \ref{18} and Corollary \ref{19}, we get $\max\{4^l, 4(\gamma_{i}(FCN(l-1)))-2\} \leq \gamma_{ri}(FCN(l)) \leq 4^l+4(\gamma_{i}(FCN(l-1)))-2$ for $l \geq 1$. The same argument constructed in Theorem \ref{23} holds here. Thus $\gamma_{ri}(FCN(l))=4(\gamma_2(FCN(l)))$ for $l \geq 1$ and the set is independent and resolving set. The \textbf{RIDS} is $D_{FCN(l)}=\{N(s_1 \ldots s_{2l}01): s_1, s_2, \ldots, s_{2l} \in \{0,1\}\}$ for $l \ge 1$. See Figure \ref{FD5}.
	\end{sloppypar}
\end{proof}

\begin{Theorem}
	$\gamma_{rt}(FCN(1)) = 8$.
\end{Theorem}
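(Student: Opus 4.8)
The plan is to pin $\gamma_{rt}(FCN(1))$ between two quantities that both equal $8$. For the lower bound I would invoke Theorem \ref{9} together with the facts already available in the excerpt, namely $\gamma_t(FCN(1)) = 8$ from Theorem \ref{21} and $\dim(FCN(1)) = 4^1 = 4$ from Theorem \ref{mmdfcn}. These give $\gamma_{rt}(FCN(1)) \geq \max\{\gamma_t(FCN(1)), \dim(FCN(1))\} = \max\{8,4\} = 8$ at once. Hence the whole content of the statement is the matching upper bound: exhibiting a single set of size $8$ that is simultaneously total dominating and resolving.

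For the upper bound I would re-use the explicit total dominating set $D = \{1111, 1110, 1010, 1011, 0111, 0110, 0011, 0010\}$ produced in Theorem \ref{21}, which already realises $\gamma_t = 8$, and show that it is in addition a resolving set; once that is checked, $D$ is an \textbf{RTDS} of cardinality $8$ and the theorem follows. To analyse resolvability I would exploit the representation $FCN(1) = C_4 \circ_v FCN(0)$ with $FCN(0) \cong C_4$: the graph is four vertex-disjoint $4$-cycles (the copies prefixed by $00$, $01$, $10$, $11$) whose four root vertices $\{0010, 1010, 1110, 0110\}$ are themselves joined into one additional $4$-cycle, the \emph{skeleton}. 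In each copy the two vertices adjacent to its root form a false-twin pair (they share exactly the root and the antipodal vertex as neighbours), so there are precisely four twin pairs; by Theorem \ref{22} these account for the full value $\dim = 4$, and resolving the graph reduces to (i) separating vertices lying in different copies or at different cyclic positions, and (ii) breaking each twin pair.

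The key step is verifying (i) and (ii) for $D$. Condition (ii) is automatic, since $D$ contains in every copy the root together with exactly one of its two twin neighbours, and that chosen twin lies at distance $0$ from itself but distance $2$ from its partner. For (i) I would use only the four code coordinates coming from the roots $0010, 1010, 1110, 0110 \in D$. Because every path leaving a copy passes through its root, the distance from a vertex $x$ in copy $X$ to the root of a copy $Y$ equals $d(x,\operatorname{root}(X)) + d_{\mathrm{sk}}(\operatorname{root}(X),\operatorname{root}(Y))$, where $d_{\mathrm{sk}}$ is computed on the skeleton $C_4$. Since the four roots sit at the four positions of a cycle, the resulting four-tuple has its unique minimum at the own-copy coordinate, with value $\delta := d(x,\operatorname{root}(X)) \in \{0,1,2\}$. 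Thus the position of the minimum identifies the copy of $x$, and the minimum value $\delta$ identifies the cyclic class of $x$ (root for $\delta = 0$, a twin neighbour for $\delta = 1$, antipodal for $\delta = 2$). Consequently the four root coordinates already separate every pair of vertices except the four twin pairs, which are then separated by (ii).

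I expect the main obstacle to be making this argmin argument airtight, i.e.\ checking that the four skeleton distance patterns $(0,1,2,1)$, $(1,0,1,2)$, $(2,1,0,1)$, $(1,2,1,0)$ remain mutually distinguishable after adding the copy-internal offset $\delta$. This reduces to the observation that adding a constant preserves the location of the unique minimum entry and that these locations are pairwise distinct across the four copies. A short finite inspection of the $16$ codes (or the structural argument above) then confirms that $D$ is resolving, so $D$ is a resolving total dominating set of size $8$; combined with the lower bound this yields $\gamma_{rt}(FCN(1)) = 8$.
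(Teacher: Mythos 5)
Your proposal is correct and takes essentially the same route as the paper: the lower bound $\gamma_{rt}(FCN(1))\geq\max\{\gamma_t(FCN(1)),\dim(FCN(1))\}=\max\{8,4\}=8$ via Theorems \ref{mmdfcn}, \ref{9} and \ref{21}, and the upper bound by exhibiting the very same eight-vertex set $\{1111, 1110, 1011, 1010, 0111, 0110, 0011, 0010\}$ as an \textbf{RTDS}. The only difference is one of rigor, not method: the paper simply asserts that this set is both resolving and total dominating (pointing to Figure \ref{FD7}), whereas you explicitly verify resolvability through the rooted-product skeleton and twin-pair analysis, which fills in a detail the paper leaves to inspection.
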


\begin{proof}
	\begin{sloppypar}
		From Theorem \ref{mmdfcn}, Theorem \ref{9} and Theorem \ref{21} we get $\gamma_{rt}(FCN(1)) = 8$, since it must be both resolving and total dominating set. The \textbf{RTDS} is $D_{FCN(1)}=\{1111, 1110, 1011, 1010, 0111, 0110, 0011, 0010\}$. See Figure \ref{FD7}.
	\end{sloppypar}
\end{proof}

\begin{figure}[H] 
	\centering
	\includegraphics[scale=1]{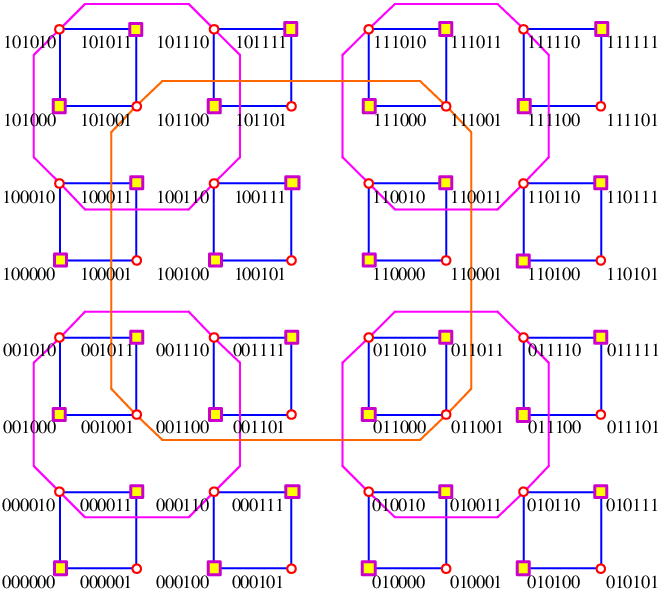}  
	\caption{\textbf{2DS}, \textbf{RDS} and \textbf{RIDS} of $FCN(2)$} \label{FD5}
\end{figure}

\begin{Theorem}
	$\gamma_{rt}(FCN(l)) = 4(\gamma_{rt}(FCN(l-1)))$ for $l \geq 2$.
\end{Theorem}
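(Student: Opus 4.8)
The plan is to induct on $l$, using the rooted-product description $FCN(l)=C_4\circ_v FCN(l-1)$, which exhibits $FCN(l)$ as four vertex-disjoint copies $\Omega_1,\Omega_2,\Omega_3,\Omega_4$ of $FCN(l-1)$ (the blocks prefixed $11,01,10,00$) whose four roots are joined into a $4$-cycle; these are the only edges between distinct copies, so every path that leaves a copy must pass through its single root $r_j$. The base case is the preceding theorem, $\gamma_{rt}(FCN(1))=8=4\gamma_{rt}(FCN(0))$.

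For the upper bound I would take $D=\bigcup_{j=1}^{4}(ab\Vert D_j)$, where each $D_j$ is a minimum resolving total dominating set of $\Omega_j\cong FCN(l-1)$, so $|D|=4\gamma_{rt}(FCN(l-1))$. Total domination is immediate, since each $D_j$ already totally dominates all of $\Omega_j$ (its root included) and the copies partition $V(FCN(l))$. For resolvability, two vertices of the same copy $\Omega_j$ are separated already on the coordinates indexed by $D_j$. For a vertex $w\notin\Omega_i$, every geodesic to a landmark of $D_i$ is funnelled through $r_i$, so the block of $w$'s code indexed by $D_i$ is a fixed integer translate of the code of $r_i$ on $D_i$; reading the four blocks $D_1,\dots,D_4$ together with the mutual $C_4$-distances of the roots then separates any two vertices lying in different copies. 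Hence $\gamma_{rt}(FCN(l))\le 4\gamma_{rt}(FCN(l-1))$.

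For the lower bound, let $D$ be an arbitrary resolving total dominating set and set $D_j=D\cap V(\Omega_j)$. Two facts constrain $|D_j|$. First, every non-root vertex of $\Omega_j$ has all its neighbours inside $\Omega_j$, so $D_j$ must totally dominate $\Omega_j$ except possibly $r_j$. Second, for $u,u'\in\Omega_j$ an external landmark $x$ gives $d(x,u)=d(x,r_j)+d_{\Omega_j}(r_j,u)$ and similarly for $u'$, so $x$ separates $u,u'$ only when $r_j$ already does; in particular the two members of a twin pair, being equidistant from $r_j$, can be separated only from inside $\Omega_j$. Thus $D_j$ resolves every twin pair of $\Omega_j$ while totally dominating it up to the root, and the goal is to conclude $|D_j|\ge\gamma_{rt}(FCN(l-1))$; summing over the four copies then yields $\gamma_{rt}(FCN(l))\ge 4\gamma_{rt}(FCN(l-1))$ and closes the induction.

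The step I expect to be the genuine obstacle is precisely this last inequality: ruling out a copy-portion that economizes by letting $r_j$ be dominated through the connecting cycle and by resolving only the forced (twin) pairs. I would attack it by contradiction in the style of Theorems \ref{21} and \ref{12}, showing that any such economy below $\gamma_{rt}(FCN(l-1))$ leaves either an undominated non-root vertex or an unresolved pair, and by noting that the vertices deleted in the minimum total-dominating construction of Theorem \ref{12} are exactly members of twin pairs, so their deletion (the source of the ``$-2$'' there) is incompatible with resolvability; this is what raises $4\gamma_t(FCN(l-1))-2$ to $4\gamma_{rt}(FCN(l-1))$. The extremal set is then recorded as the union of the four prefixed copies of a minimum resolving total dominating set of $FCN(l-1)$.
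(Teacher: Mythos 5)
Your overall frame matches the paper's in one respect: your upper-bound construction (the union of the four prefixed copies of a minimum \textbf{RTDS} of $FCN(l-1)$) is exactly the extremal set the paper records, and the paper likewise starts from the sandwich $\max\{4^l,\,4\gamma_t(FCN(l-1))-2\}\leq \gamma_{rt}(FCN(l))\leq 4^l+4\gamma_t(FCN(l-1))-2$ obtained from Theorems \ref{mmdfcn}, \ref{9} and \ref{12}. But your lower bound is a genuine gap, and you have correctly identified it as such yourself: the two necessary conditions you extract for $D_j=D\cap V(\Omega_j)$ --- total domination of $\Omega_j$ except possibly $r_j$, and resolution of the twin pairs of $\Omega_j$ --- are strictly weaker than $D_j$ being a resolving total dominating set of $FCN(l-1)$ (a set can totally dominate $\Omega_j-r_j$ and hit every twin pair without resolving all of $\Omega_j$), so $|D_j|\geq \gamma_{rt}(FCN(l-1))$ does not follow from them. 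Moreover, your proposed repair --- that re-inserting the twin vertices deleted in the construction of Theorem \ref{12} lifts $4\gamma_t(FCN(l-1))-2$ to $4\gamma_{rt}(FCN(l-1))$ --- cannot work quantitatively: since $\gamma_t(FCN(l))=4\gamma_t(FCN(l-1))-2$ with $\gamma_t(FCN(1))=8$ while $\gamma_{rt}(FCN(l))=2\cdot 4^{l}$, the deficit $4\gamma_{rt}(FCN(l-1))-\bigl(4\gamma_t(FCN(l-1))-2\bigr)=\tfrac{2}{3}\bigl(4^{l-1}-1\bigr)+\tfrac{4}{3}$ grows without bound in $l$, so it is not accounted for by restoring a bounded number of deleted vertices.

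The paper closes the lower bound not copy-by-copy but at the level of the $4^l$ disjoint base $4$-cycles of $FCN(l)$ (the argument it imports from Theorem \ref{23}, strengthened by ``including its one neighbor''): each base cell $\{ab00, ab01, ab11, ab10\}$-type contains a false-twin pair whose only neighbors lie inside that same cell; any resolving set must contain at least one twin from each pair (Theorem \ref{22}), and total domination of the twin vertices then forces an in-cell common neighbor into the set as well, so every \textbf{RTDS} meets every base cell in at least two vertices, giving $\gamma_{rt}(FCN(l))\geq 2\cdot 4^{l}=4\gamma_{rt}(FCN(l-1))$, with the constructed set attaining it. This per-cell ``twin plus neighbor'' count is the missing idea in your proposal; if you want to keep your inductive per-copy format, you would in effect have to run this same cell argument inside each copy anyway. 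A secondary soft spot: your cross-copy resolvability claim in the upper bound is asserted rather than proved --- the funnelling identity $d(w,x)=d(w,r_i)+d_{\Omega_i}(r_i,x)$ leaves open the possibility that some $u\in\Omega_a$ has its $D_a$-code equal to a constant positive translate of the code of $r_a$, and ruling this out requires using the specific structure of the constructed set, not merely that each $D_j$ resolves its copy.
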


\begin{proof}
	\begin{sloppypar}
		From Theorem \ref{mmdfcn}, Theorem \ref{9} and Theorem \ref{12}, we have $\max\{4^l, 4(\gamma_t(FCN(l-1)))-2\} \leq \gamma_{rt}(FCN(l)) \leq 4^l + 4(\gamma_t(FCN(l-1)))-2$. The same argument constructed in Theorem \ref{23} holds here. But we need to choose the twins except one from each twin set including its one neighbor. Thus $\gamma_{rt}(FCN(l))=4(\gamma_{rt}(FCN(l-1)))$ for $l \geq 2$ and the set is total dominating and resolving set. The \textbf{RTDS} is $D_{FCN(l)} = \{ab \ || \   D_{FCN(l-1)} : a,b \in \{0,1\} \}$ for $l \ge 2$, where $D_{FCN(1)}=\{1111, 1110, 1011, 1010, 0111, 0110, 0011, 0010\}$. See Figure \ref{FD6}.
	\end{sloppypar}
\end{proof}

\begin{Theorem}
	$\gamma_{rc}(FCN(l)) = 4(\gamma_c(FCN(l-1))+1)$ for $l \geq 1$.
\end{Theorem}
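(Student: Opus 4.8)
The plan is to show that the connected dominating set $D_{FCN(l)}$ already produced in Theorem \ref{13} is in fact also a resolving set; the equality then follows at once. For the lower bound, every resolving connected dominating set is a connected dominating set, so Theorem \ref{10} together with Theorem \ref{13} gives $\gamma_{rc}(FCN(l)) \geq \gamma_c(FCN(l)) = 4(\gamma_c(FCN(l-1))+1)$. Hence it suffices to prove that $D_{FCN(l)} = \{ab \, || \, D_{FCN(l-1)} : a,b \in \{0,1\}\} \cup \{0010(01)^{l-1}, 0110(01)^{l-1}, 1010(01)^{l-1}, 1110(01)^{l-1}\}$ resolves $FCN(l)$, for then $\gamma_{rc}(FCN(l)) \leq |D_{FCN(l)}| = \gamma_c(FCN(l))$ and the two bounds coincide. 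The noteworthy feature is that resolving connected domination attains the lower end $\gamma_{rc}=\gamma_c$, i.e.\ the connected dominating set is spread out enough to be automatically resolving.

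I would exploit the rooted-product description $FCN(l) = C_4 \circ_v FCN(l-1)$ with root $v = 10(01)^{l-1}$. The four roots $ab \, || \, v$ are exactly the four extra vertices adjoined in $D_{FCN(l)}$, and they form the $C_4$ backbone joining the four copies $ab \, || \, FCN(l-1)$. The two distance facts I would record first are: (i) for vertices in a common copy, $d_{FCN(l)}(ab \, || \, x,\, ab \, || \, y) = d_{FCN(l-1)}(x,y)$, since any path leaving a copy must re-enter through its unique root and is therefore never shorter than an internal path; and (ii) for vertices in distinct copies, $d_{FCN(l)}(ab \, || \, x,\, cd \, || \, y) = d_{FCN(l-1)}(x,v) + d_{C_4}(ab,cd) + d_{FCN(l-1)}(v,y)$, because every inter-copy path is forced through the two roots.

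With these facts I would induct on $l$, the hypothesis being that $D_{FCN(l-1)}$ resolves $FCN(l-1)$; the base $l=1$ is a finite check, since inside each copy the landmarks $ab \, || \, 11$ and $ab \, || \, 10$ already resolve $C_4$, and all four roots lie in $D_{FCN(1)}$. For the step, take distinct $P,Q$. If $P = ab \, || \, x$ and $Q = ab \, || \, y$ share a copy, then by fact (i) the landmarks $ab \, || \, D_{FCN(l-1)} \subseteq D_{FCN(l)}$ reproduce the $FCN(l-1)$-codes of $x$ and $y$, so the induction hypothesis supplies a separating landmark. If $P = ab \, || \, x$ and $Q = cd \, || \, y$ lie in different copies, I would use the four roots: by fact (ii) the vector of distances from $P$ to the roots equals $d_{FCN(l-1)}(x,v)$ plus the $C_4$-distance profile of $ab$, whose unique minimum sits at coordinate $ab$, while $Q$'s root-vector has its unique minimum at coordinate $cd$. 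Since $ab \neq cd$, a comparison of the two profiles shows they cannot coincide, so some root separates $P$ and $Q$. This exhausts all pairs and proves $D_{FCN(l)}$ resolving.

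The main obstacle is the inter-copy case: one must be certain that the four roots alone recover the copy index. This rests entirely on fact (ii) and on the observation that in $C_4$ the distance vector from any vertex has a unique zero entry, so the location of the minimum of the root-code pins down the copy. Concretely, if the two root-profiles agreed, reading off the minimum equation at coordinate $ab$ and at coordinate $cd$ would force both $\delta_P \geq \delta_Q + 1$ and $\delta_Q \geq \delta_P + 1$ (where $\delta_P = d_{FCN(l-1)}(x,v)$ and $\delta_Q = d_{FCN(l-1)}(y,v)$), a contradiction that closes the case. A minor point to verify is that the landmarks available inside a single copy are precisely $ab \, || \, D_{FCN(l-1)}$, with the adjoined root only helping, so that the induction hypothesis applies verbatim in case (i).
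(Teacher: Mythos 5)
Your proposal takes essentially the same route as the paper: the lower bound $\gamma_{rc}(FCN(l)) \geq \gamma_c(FCN(l)) = 4(\gamma_c(FCN(l-1))+1)$ via Theorems \ref{10} and \ref{13}, and the matching upper bound by showing that the very connected dominating set $D_{FCN(l)}$ exhibited in Theorem \ref{13} is also a resolving set of $FCN(l)$. Your rooted-product distance facts (i) and (ii) and the unique-minimum argument over the four roots correctly supply the verification of the resolving property that the paper merely asserts by exhibiting the set, so your write-up is a sound, more detailed version of the paper's proof.
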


\begin{proof}
	\begin{sloppypar}
		From Theorem \ref{mmdfcn}, Theorem \ref{10} and Theorem \ref{13}, we get $\gamma_{rc}(FCN(l)) = 4(\gamma_c(FCN(l-1))+1)$ for $l \geq 1$. The \textbf{RCDS} is $D_{FCN(l)} = \{ab \ || \ D_{FCN(l-1)} : a,b \in {0,1}\} \cup \{ 0010(01)^{l-1}, 0110(01)^{l-1}, 1010(01)^{l-1}, 1110(01)^{l-1} \}$ for $l \ge 1$ where $D_{FCN(1)}=\{1111, 1110, 1011, 1010, 0111, 0110, 0011, 0010\}$. See Figure \ref{FD3}.
	\end{sloppypar}
\end{proof}

\begin{figure}[H] 
	\centering
	\includegraphics[scale=1]{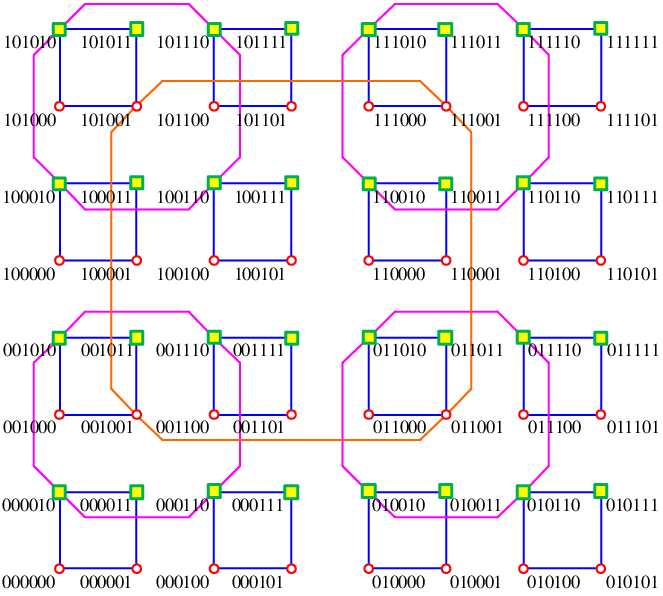}  
	\caption{\textbf{RTDS} of $FCN(2)$} \label{FD6}
\end{figure}

\section{Conclusion}

Studying domination in the context of rooted product graphs has provided new insights into domination parameters, such as the domination number and the change when two graphs are combined through a rooted product operation. This work has contributed to understand the original properties of the graph influencing the domination properties of resultant graphs. Specifically, for rooted product graphs, the domination behavior has been linked to the properties of the root vertex and interaction with other vertices. These insights can help design more efficient algorithms and solve graph-theoretical problems in network theory, biology, and computer science. Future research includes developing algorithms, root selection strategies, and parameter generalization. By addressing these future research directions, a more comprehensive understanding of domination in rooted product graphs can be developed, leading to theoretical advancements and practical applications across various fields. \\
\textbf{Author Contributions}: {Conceptualization, S.P. and A.K.A; methodology, S.P.; software, A.K.A. and M.A.; validation, S.P., A.K.A. and M.A.; formal analysis, S.P. and A.K.A; investigation, S.P. and A.K.A; writing---original draft preparation, A.K.A.; writing---review and editing, S.P.; visualization, S.P., A.K.A, and M.A.; supervision, S.P. All authors have read agreed to the published version of the manuscript.}\\
\textbf{Funding}: {This research received no external funding.}\\
\textbf{Data Availability}: {No new data were created or analyzed in this study. Data sharing is not applicable to this article.}\\
\textbf{Conflicts of Interest}: {The authors declare no conflicts of interest.}

\end{document}